\titleformat{\section}{\normalfont\large\bfseries}{\thesection}{1em}{}
\titleformat{\subsection}{\normalfont\bfseries}{\thesubsection}{1em}{}
\definecolor{LinkColor}{rgb}{0,0,1}
\definecolor{LinkColor2}{rgb}{0,0.5,0}
\definecolor{lbcolor}{rgb}{0.85,0.85,0.85}
\definecolor{FrameColor}{rgb}{0.85,0.85,0.85}
\definecolor{rosso}{rgb}{0.8,0,0}
\definecolor{lightgray}{rgb}{0.5,0.5,0.5}
\definecolor{violet}{rgb}{0.65,0,0.65}
\definecolor{darkgreen}{rgb}{0,0.5,0}
\newtheorem{theorem}{Theorem}[section]
\newtheorem{lemma}[theorem]{Lemma}
\newtheorem{proposition}[theorem]{Proposition}
\newtheorem{definition}[theorem]{Definition}
\theoremstyle{definition}
\newtheorem{remark}[theorem]{Remark}
\renewenvironment{proof}[1][\proofname]{%
	\par\pushQED{\qed}\normalfont%
	\topsep6\p@\@plus6\p@\relax
	\trivlist\item[\hskip\labelsep\bfseries#1\@addpunct{.}]%
	\ignorespaces
}{%
	\popQED\endtrivlist\@endpefalse
}
\renewcommand\paragraph{\@startsection{paragraph}{4}{\z@}%
	{1ex \@plus1ex \@minus.2ex}%
	{-1em}%
	{\normalfont\normalsize\bfseries}}
\renewcommand\subparagraph{\@startsection{paragraph}{4}{\z@}%
	{1ex \@plus1ex \@minus.2ex}%
	{-1em}%
	{\normalfont\normalsize\itshape}}
\newcommand{\abs}[1]{\left| #1 \right|}
\newcommand{\norm}[1]{\| #1 \|}
\newcommand{\bignorm}[1]{\big\| #1 \big\|}
\newcommand{\ang}[2]{ \langle #1 , #2  \rangle}
\newcommand{\bigang}[2]{ \big< #1 , #2  \big>}
\newcommand{\scp}[2]{ \left( #1 , #2  \right)}
\newcommand{\bigscp}[2]{\big( #1 , #2 \big)}
\newcommand{\meano}[1]{{\langle #1 \rangle}_{\Omega}}
\newcommand{\meang}[1]{{\langle #1 \rangle}_{\Gamma}}
\newcommand{\R}{\mathbb R}
\newcommand{\N}{\mathbb N}
\newcommand{\n}{\mathbf{n}}
\newcommand{\intO}{\int_\Omega}
\newcommand{\intG}{\int_\Gamma}
\newcommand{\ep}{\varepsilon}
\newcommand{\dtau}{\;\mathrm d\tau}
\newcommand{\dx}{\;\mathrm{d}x}
\newcommand{\dt}{\;\mathrm dt}
\newcommand{\ds}{\;\mathrm ds}
\newcommand{\dxt}{\;\mathrm{d}x\;\mathrm{d}t}
\newcommand{\dGt}{\;\mathrm{d}\Ga\;\mathrm{d}t}
\newcommand{\dxs}{\;\mathrm{d}x\;\mathrm{d}s}
\newcommand{\dGs}{\;\mathrm{d}\Ga\;\mathrm{d}s}
\newcommand{\dG}{\;\mathrm d\Ga}
\newcommand{\h}{\mathds{h}}
\newcommand{\ddt}{\frac{\mathrm d}{\mathrm dt}}
\newcommand{\del}{\partial}
\newcommand{\delt}{\partial_{t}}
\newcommand{\deln}{\partial_\n}
\newcommand{\Grad}{\nabla}
\newcommand{\Lap}{\Delta}
\newcommand{\Div}{\textnormal{div}}
\newcommand{\Gradg}{\nabla_\Ga}
\newcommand{\Lapg}{\Delta_\Ga}
\newcommand{\Divg}{\textnormal{div}_\Ga}
\newcommand{\emb}{\hookrightarrow}
\newcommand{\suchthat}{\;\ifnum\currentgrouptype=16 \middle\fi|\;}
\newcommand{\Om}{\Omega}
\newcommand{\Ga}{\Gamma}
\newcommand{\projam}{\mathbb{P}_{\mathcal{A}_m}}
\newcommand{\projbm}{\mathbb{P}_{\mathcal{B}_m}}
\newcommand{\extended}[1]{{\color{lightgray}#1}}
\renewcommand{\extended}[1]{}
\begin{document}

%
%	TITLE PAGE
%

\title{\bfseries 
    Well-posedness of a bulk-surface\\ 
    convective Cahn--Hilliard system\\
    with dynamic boundary conditions
\\[-1.5ex]$\;$}

\author{
	Patrik Knopf \footnotemark[1]
    \and Jonas Stange \footnotemark[1]}

\date{ }

\maketitle

\renewcommand{\thefootnote}{\fnsymbol{footnote}}

\footnotetext[1]{
    Faculty for Mathematics, 
    University of Regensburg, 
    93053 Regensburg, 
    Germany \newline
	\tt(%
        \href{mailto:patrik.knopf@ur.de}{patrik.knopf@ur.de},
        \href{mailto:jonas.stange@ur.de}{jonas.stange@ur.de}%
        ).
}

\begin{center}
	\scriptsize
	{
		\textit{This is a preprint version of the paper. Please cite as:} \\  
		P.~Knopf, J.~Stange, 
        \textit{NoDEA Nonlinear Differential Equations Appl.} \textbf{31}:82 (2024), \\
		\url{https://doi.org/10.1007/s00030-024-00970-3}
	}
\end{center}

\medskip

%
%	ABSTRACT
%

\begin{small}
\begin{center}
    \textbf{Abstract}
\end{center}
We consider a general class of bulk-surface convective Cahn--Hilliard systems with dynamic boundary conditions. In contrast to classical Neumann boundary conditions, the dynamic boundary conditions of Cahn--Hilliard type allow for dynamic changes of the contact angle between the diffuse interface and the boundary, a convection-induced motion of the contact line as well as absorption of material by the boundary. The coupling conditions for bulk and surface quantities involve parameters $K,L\in[0,\infty]$, whose choice declares whether these conditions are of Dirichlet, Robin or Neumann type. We first prove the existence of a weak solution to our model in the case $K,L\in (0,\infty)$ by means of a Faedo--Galerkin approach. For all other cases, the existence of a weak solution is then shown by means of the asymptotic limits, where $K$ and $L$ are sent to zero or to infinity, respectively. Eventually, we establish higher regularity for the phase-fields, and we prove the uniqueness of weak solutions given that the mobility functions are constant.
\\[1ex]
\textbf{Keywords:} convective Cahn--Hilliard equation, bulk-surface interaction, dynamic boundary conditions, dynamic contact angle, moving contact line.
\\[1ex]	
\textbf{Mathematics Subject Classification:} 
35K35, %%% Initial-boundary value problems for higher-order parabolic equations
35D30, %%% Weak solutions to PDEs
35A01, %%% Existence problems for PDEs: global existence, local existence, non-existence
35A02, %%% Uniqueness problems for PDEs: global uniqueness, local uniqueness, non-uniqueness
35Q92. %%% PDEs in connection with biology, chemistry and other natural sciences
\end{small}

\begin{small}
\setcounter{tocdepth}{2}
\hypersetup{linkcolor=black}
\tableofcontents
\end{small}

\setlength\parindent{0ex}
\setlength\parskip{1ex}
\allowdisplaybreaks
\numberwithin{equation}{section}
%\linenumbers
\renewcommand{\thefootnote}{\arabic{footnote}}

\newpage

\section{Introduction} 
\label{SECT:INTRO}

In recent times, the description of immiscible \textit{two-phase flows} in a bounded domain by diffuse-interface models has become very popular. This is especially because those systems can usually be handled more easily in terms of mathematical analysis than their sharp-interface counterparts. 
In such models, the location of the two fluids inside the container is represented by an order parameter, the so-called \textit{phase-field}. The time evolution of this phase-field function is often described by a \textit{convective Cahn--Hilliard equation}. There, the velocity field of the mixture enters via the material derivative of the phase-field. The time evolution of the velocity field is usually described by a fluid equation, e.g., the \textit{Navier--Stokes equation}. Very frequently used \textit{Navier--Stokes--Cahn--Hilliard} models for two-phase flows are the \textit{Model H} (see \cite{Gurtin1996,Hohenberg1977}), which covers the case where both fluids have the same individual density, and the \textit{Abels--Garcke--Grün model (AGG model)} (see \cite{Abels2012}), which is even capable of describing the situation where both fluids have different (i.e., \textit{unmatched}) densities.

For the Model H and the AGG model, the standard choice are homogeneous Neumann boundary conditions for the Cahn--Hilliard quantities (i.e., the phase-field and the chemical potential) as well as a no-slip boundary condition for the velocity field. However, these choices lead to some crucial limitations of these models:
\begin{itemize}[leftmargin=2em, topsep=0em, partopsep=0em, parsep=0em, itemsep=0em]
	\item The contact angle between the diffuse interface and the boundary of the domain is fixed at ninety degrees at all times, which is unrealistic for many real-world applications.
	\item The motion of the contact line, where the diffuse interface intersects the boundary of the domain, is driven only by diffusion. This means that a motion of the contact line caused by convection is not taken into account.
	\item No transfer of material between the bulk and the boundary of the domain is allowed. Therefore, any absorption of material by the boundary cannot be described.
\end{itemize}
For a more detailed discussion of these issues, we refer the reader to \cite{Giorgini2023}. 

To overcome these limitations, a new Navier--Stokes--Cahn--Hilliard model with dynamic boundary conditions was derived in \cite{Giorgini2023}. It can be regarded as an extension of the AGG model and is capable of describing two-phase flows with unmatched densities. In the case of matched densities, some first analytic results (namely the existence of weak solution and their uniqueness under certain additional assumptions) were presented in \cite{Giorgini2023}. The case of unmatched densities, however, is much more involved. This is mainly because in the Navier--Stokes equation, the density function then depends on the phase-field, and a further flux term related to the interfacial motion will appear. 
In the case of unmatched densities, at least for certain parameter choices in the dynamic boundary conditions, the existence of a weak solution to the model introduced in \cite{Giorgini2023} was shown in \cite{Gal2023a} by an implicit time discretization scheme.
A different strategy to prove the existence of weak solutions to the model proposed in \cite{Giorgini2023}, in a unified framework for all parameter choices in the dynamic boundary conditions, is to first analyze the underlying bulk-surface convective Cahn--Hilliard subsystem separately. Using this information, a weak solution to the full Navier--Stokes--Cahn--Hilliard system is then to be constructed by means of a suitable fixed point argument.

This motivates us to investigate the following \textit{bulk-surface convective Cahn--Hilliard system with dynamic boundary conditions}:
\begin{subequations}\label{CH}
    \begin{align}
        \label{CH:1}
        &\delt\phi + \Div(\phi\boldsymbol{v}) = \Div(m_\Om(\phi)\Grad\mu) && \text{in} \ Q, \\
        \label{CH:2}
        &\mu = -\epsilon \Lap\phi + \epsilon^{-1}F'(\phi)   && \text{in} \ Q, \\
        \label{CH:3}
        &\delt\psi + \Divg(\psi\boldsymbol{w}) = \Divg(m_\Ga(\psi)\Gradg\theta) - \beta m_\Om(\phi)\deln\mu && \text{on} \ \Sigma, \\
        \label{CH:4}
        &\theta = - \epsilon_{\Gamma}\kappa\Lapg\psi + \epsilon_{\Gamma}^{-1}G'(\psi) + \alpha\epsilon\deln\phi && \text{on} \ \Sigma, \\
        \label{CH:5}
        &\begin{cases} \epsilon K\deln\phi = \alpha\psi - \phi &\text{if} \ K\in [0,\infty), \\
        \deln\phi = 0 &\text{if} \ K = \infty
        \end{cases} && \text{on} \ \Sigma, \\
        \label{CH:6}
        &\begin{cases} 
        L m_\Om(\phi)\deln\mu = \beta\theta - \mu &\text{if} \  L\in[0,\infty), \\
        m_\Om(\phi)\deln\mu = 0 &\text{if} \ L=\infty
        \end{cases} &&\text{on} \ \Sigma, \\
        \label{CH:7}
        &\phi\vert_{t=0} = \phi_0 &&\text{in} \ \Om, \\
        \label{CH:8}
        &\psi\vert_{t=0} = \psi_0 &&\text{on} \ \Ga.
    \end{align}
\end{subequations}
Here, $\Om\subset\R^d$ with $d\in\{2,3\}$ is a bounded domain with boundary $\Ga\coloneqq\partial\Om$, $T>0$ is a prescribed final time, and we set $Q\coloneqq \Om\times(0,T)$ and $\Sigma\coloneqq\Ga\times(0,T)$. 
The outward unit normal vector field on $\Gamma$ is denoted by $\n$.
% For any vector field $\boldsymbol{u}$, the subscript $\tau$ denotes its tangential component, i.e. $\boldsymbol{u}_\tau = \boldsymbol{u} - (\boldsymbol{u}\cdot\mathbf{n})\mathbf{n}$. 
Moreover, $\Gradg$, $\Divg$ and $\Lapg$ denote the surface gradient, the surface divergence, and the Laplace-Beltrami operator on $\Ga$, respectively. 

The function $\phi:Q\rightarrow\R$ is the bulk phase-field. It is an order parameter, which represents the distribution of the two immiscible materials within the domain $\Omega$. Similarly, the surface phase-field $\psi:\Sigma\rightarrow\R$ represents the distribution of two materials on the surface. 
The functions $F$ and $G$ are double-well potentials, which lead to the effect that $\phi(t)$ and $\psi(t)$ will attain values close to $\pm 1$ in most parts of $\Omega$ and $\Gamma$ (which correspond to the pure phases of the materials). In the remaining intermediate regions, which are called diffuse interfaces, $\phi$ and $\psi$ are expected to exhibit a continuous transition between the values $-1$ and $1$.  
The functions $\mu:Q\rightarrow\R$ and $\theta:\Sigma\rightarrow\R$ represent the bulk chemical potential and the surface chemical potential, respectively. The non-negative, scalar functions $m_\Om(\phi)$ and $m_\Ga(\psi)$ are called mobilities. They depend on the phase-fields and describe where diffusion processes occur. Moreover, the functions $\boldsymbol{v}:Q\rightarrow\R^d$ and $\boldsymbol{w}:\Sigma\rightarrow\R^d$ are prescribed velocity fields that correspond to the flow of the two materials in the bulk and on the surface, respectively. 
In this paper, we always assume that $\boldsymbol{v} \cdot \n = 0$ and $\boldsymbol{w} \cdot \n = 0$ on $\Sigma$.
We point out that in many cases (e.g., in the model derived in \cite{Giorgini2023}), $\boldsymbol{v}$ and $\boldsymbol{w}$ are related by the condition $\boldsymbol{w} = \boldsymbol{v}\vert_\Sigma$ on $\Sigma$. 
However, as this relation will not have any impact on our mathematical analysis, we consider $\boldsymbol{v}$ and $\boldsymbol{w}$ as independent functions to keep our model as general as possible.
Furthermore, $\epsilon>0$ is a parameter related to the thickness of the diffuse interface in the bulk, whereas $\epsilon_\Gamma > 0$ corresponds to the width of the diffuse interface on the boundary. The parameter $\kappa > 0$ acts as a weight for the surface Dirichlet energy $\psi\mapsto \intG \abs{\Gradg \psi}^2 \dG$, which has a smoothing effect on the phase separation at the boundary.

The time evolution of $(\phi,\mu)$ is determined by the \textit{bulk convective Cahn--Hilliard subsystem} $\big(\eqref{CH:1},\eqref{CH:2}\big)$, whereas the evolution of $(\psi,\theta)$ is described by the \textit{surface convective Cahn--Hilliard subsystem} $\big(\eqref{CH:3},\eqref{CH:4}\big)$, which is coupled to the bulk by expressions involving the normal derivatives $\deln\phi$ and $\deln\mu$. The bulk and surface quantities are further coupled by the boundary conditions \eqref{CH:5} and \eqref{CH:6}, which depend on parameters $K,L\in [0,\infty]$ and $\alpha,\beta\in\R$.

In \eqref{CH:5} and \eqref{CH:6}, the parameters $K,L\in [0,\infty]$ are used to distinguish different cases, each corresponding to a certain solution behaviour related to a physical phenomenon. The case that has been studied most extensively in the literature is $K=0$. In this case, \eqref{CH:5} is to be interpreted as the Dirichlet type boundary condition $\phi = \alpha\psi$ on $\Sigma$.
This choice makes particular sense along with $\alpha=1$, if the materials on the boundary are simply considered as an extension of those in the bulk. Originally, the boundary condition \eqref{CH:6} was introduced in the \textit{non-convective case} (i.e., $\boldsymbol{v}\equiv 0$ and $\boldsymbol{w}\equiv 0$) in the following literature: 
\begin{itemize}[leftmargin=2em, topsep=0em, partopsep=0em, parsep=0em, itemsep=0em]
	\item The choice $L=0$ was proposed in \cite{Gal2006} and \cite{Goldstein2011}. Then, \eqref{CH:6} can be restated as the Dirichlet condition $\mu = \beta\theta$ on $\Sigma$, which means that the chemical potentials $\mu$ and $\theta$ are always in a chemical equilibrium. In this case, a rapid transfer of material between bulk and boundary can be expected (see, e.g., \cite{Knopf2021a}).
	\item The choice $L=\infty$ was introduced in \cite{Liu2019}. In this case, \eqref{CH:6} is a homogeneous Neumann boundary condition on $\mu$, which means that the mass flux between bulk and surface is zero. Consequently, no transfer of material between bulk and surface will occur.
	\item The choice $L\in (0,\infty)$ was first used in \cite{Knopf2021a} to interpolate between the extreme cases $L=0$ and $L=\infty$. In this case, a transfer of material between bulk and surface will occur, and the number $L^{-1}$ is related to the rate of absorption of bulk material by the boundary (cf.~\cite{Knopf2021a}).
\end{itemize}
\pagebreak[4]
In the \textit{convective case} (i.e., with non-trivial velocity fields $\boldsymbol{v}$ and $\boldsymbol{w}$), the boundary condition \eqref{CH:6} was also used in the Navier--Stokes--Cahn--Hilliard model derived in \cite{Giorgini2023} and in the Cahn--Hilliard--Brinkman model studied in \cite{Colli2023}. Furthermore, the model derivation in \cite{Giorgini2023} shows that the parameter $\beta$ acts as a weight for the mass flux between bulk and surface. This means that $\beta$ can also be negative and therefore, we simply assume $\beta\in\R$.

However, we also want to cover the case, where the materials on the boundary are not the same as those in the bulk. For instance, this is the case if the bulk materials are transformed at the boundary by a chemical reaction. In this case, the surface phase-field might not be proportional to the trace of the bulk phase-field, so the choice $K=0$ would not be appropriate. In our model, this is taken into account by the choice $K\in (0,\infty]$. In the case $K=\infty$, \eqref{CH:6} degenerates to a homogeneous Neumann boundary condition for the phase-field. For most applications, this might not be the preferred choice as it is known that such a Neumann boundary condition enforces the diffuse interface to always intersect the boundary at a perfect angle of ninety degrees. In fact, this is one of the aforementioned limitations we usually want to overcome by the usage of dynamic boundary conditions. However, we include the case $K=\infty$ for the sake of completeness. In the case $K\in (0,\infty)$, \eqref{CH:6} can be regarded as a Robin type boundary condition. It is suitable to describe a scenario, where $\psi$ and the trace of $\phi$ are not proportional, but it still allows for dynamical changes of the contact angle between the diffuse interface and the boundary. In the context of bulk-surface Cahn--Hilliard equations, condition \eqref{CH:6} with $K\in(0,\infty)$ was used in \cite{Knopf2020} (in the non-convective case) and in \cite{Colli2023} (in the convective case). In \cite{Knopf2020} and \cite{Colli2023}, it was further shown that the limit $K\to 0$ can be used to recover the boundary condition \eqref{CH:6} with $K=0$. 
Especially in the case that the materials on the boundary differ from those in the bulk, the parameter $\alpha$ could be any real number (even with a negative sign). Therefore, to keep the model as general as possible, we allow for $\alpha\in\R$. However, for our mathematical analysis, we will need the additional relation $\alpha\beta\abs{\Omega} + \abs{\Gamma} \neq 0$. Of course, this is trivially satisfied if $\alpha$ and $\beta$ have the same sign.

We further want to point out that including the case $K\in (0,\infty)$ also helps our mathematical analysis. This is because the existence of a weak solution to \eqref{CH} can be shown by a suitable Faedo--Galerkin scheme only in the cases, where the boundary conditions are of the same type (i.e., $K=L=0$, $K=L=\infty$ or $0<K,L<\infty$), as otherwise, the spaces of admissible test functions in the weak formulation do not match. In this paper, we will first construct a weak solution of \eqref{CH} in the case $0<K,L<\infty$. Then, we prove the existence of a weak solution in the remaining cases by sending the parameters $K$ and $L$ to zero or to infinity, respectively.

Let us now discuss some important properties of our model.
The system \eqref{CH} is associated with the total energy
\begin{align}
    \label{INTRO:ENERGY}
    \begin{split}
        E_K(\phi,\psi) &= \intO \frac{\epsilon}{2}\abs{\Grad\phi}^2 + \epsilon^{-1}F(\phi) \dx + \intG \frac{\epsilon_{\Gamma}\kappa}{2}\abs{\Gradg\psi}^2 + \epsilon_{\Gamma}^{-1}G(\psi) \dG \\
        &\quad + \h(K)\intG\frac{1}{2}\abs{\alpha\psi - \phi}^2\dG,
    \end{split}
\end{align}
where the function
\begin{align*}
    \h(K) \coloneqq
    \begin{cases}
        K^{-1}, &\text{if } K\in (0,\infty), \\
        0, &\text{if } K\in\{0,\infty\}
    \end{cases}
\end{align*}
is used to distinguish the different cases corresponding to the choice of $K$. Sufficiently regular solutions of the system \eqref{CH} satisfy the \textit{mass conversation law}
\begin{align}
    \label{INTRO:MASS}
    \begin{dcases}
        \beta\intO \phi(t)\dx + \intG \psi(t)\dG = \beta\intO \phi_0 \dx + \intG \psi_0\dG, &\textnormal{if } L\in[0,\infty), \\
        \intO\phi(t)\dx = \intO\phi_0\dx \quad\textnormal{and}\quad \intG\psi(t)\dG = \intG\psi_0\dG, &\textnormal{if } L = \infty
    \end{dcases}
\end{align}
for all $t\in[0,T]$.
As mentioned above, this means that a transfer of material between bulk and surface is allowed only in the cases $L\in [0,\infty)$. 
Moreover, sufficiently regular solutions satisfy the \textit{energy identity}
\begin{align}
    \label{INTRO:ENERGY:ID}
    \begin{split}
        \ddt E_K(\phi,\psi)  &= \intO\phi\boldsymbol{v}\cdot\Grad\mu\dx + \intG \psi\boldsymbol{w}\cdot\Gradg\theta\dG 
        - \intO m_\Om(\phi)\abs{\Grad\mu}^2\dx\\
        &\qquad
         - \intG m_\Ga(\psi)\abs{\Gradg\theta}^2\dG
        % \\
        % &\qquad 
        - \h(L) \intG (\beta\theta-\mu)^2\dG \\
    \end{split}
\end{align}
for all $t\in[0,T]$. We point out that in the non-convective case (i.e., $\boldsymbol{v}=0$ and $\boldsymbol{w}=0$), the right-hand side of \eqref{INTRO:ENERGY:ID} is clearly non-positive. This means that the energy dissipates over the course of time and the terms on the right-hand side of \eqref{INTRO:ENERGY:ID} can be interpreted as the dissipation rate. In this case, the system \eqref{CH} can be derived as an $H^{-1}$ type gradient flow of the free energy $E_K$ subject to the inner product $\scp{\cdot}{\cdot}_{L,\beta,*}$ that will be introduced in Section~\ref{SECT:PRELIM}, \ref{PRELIM:bulk-surface-elliptic} (cf.~\cite[Remark~2.2]{Knopf2021a} and \cite[Section~3]{Knopf2020}). Moreover, if $L=\infty$, the system \eqref{CH} can also be derived by an energetic variational approach that combines the least action principle and Onsager’s principle of maximum energy dissipation (cf.~\cite[Section~2]{Liu2019} and \cite[Appendix]{Knopf2020}).

In the convective case (i.e., $\boldsymbol{v}$ and $\boldsymbol{w}$ are non-trivial), the energy identity \eqref{INTRO:ENERGY:ID} does, in general, not imply dissipation of the energy. However, if the velocity field is not just prescribed but determined by a Navier--Stokes equation (cf.~\cite{Giorgini2023}) or a Brinkman/Stokes equation (cf.~\cite{Colli2023}), an energy dissipation law for the corresponding total energy can be obtained.

\paragraph{A brief overview of related literature.}
In the non-convective case, we refer to \cite{Garcke2020,Garcke2022,Colli2020,Colli2022,Colli2022a,Fukao2021,Miranville2020} for analytical results and to \cite{Metzger2021,Metzger2023,Harder2022,Meng2023,Bao2021,Bao2021a} for numerical results on the Cahn--Hilliard equation with a dynamic boundary condition of Cahn--Hilliard type. The nonlocal Cahn--Hilliard equation with a nonlocal dynamic boundary condition of Cahn--Hilliard type was proposed and analyzed in \cite{Knopf2021b}. Results on the Cahn--Hilliard equation with dynamic boundary conditions of second-order (e.g., of Allen--Cahn or Wentzell type) can be found, for instance, in \cite{Colli2015,Colli2014,Miranville2010,Racke2003,Wu2004,Gal2009,Gilardi2010,Cavaterra2011}.

The convective Cahn--Hilliard equation with dynamic boundary conditions was analyzed, for instance, in \cite{Colli2018,Colli2019,Gilardi2019}, and also in \cite{Gal2016,Gal2019,Giorgini2023,Gal2023a} as part of a system in which the velocity field is described by an additional fluid equation.

For more information on the Cahn--Hilliard equation with classical homogeneous Neumann boundary conditions or dynamic boundary conditions, we refer to the recent review paper \cite{Wu2022} as well as the book \cite{Miranville-Book}.

\paragraph{Structure of this paper.} In Section~\ref{SECT:PRELIM} we collect some notation, assumptions, preliminaries and important tools. After introducing the notion of weak solutions of \eqref{CH}, our main results are stated in Section~\ref{SECT:MAINRESULTS}. In Section~\ref{SECT:EXISTENCE}, we construct a weak solution in the case $(K,L)\in(0,\infty)^2$ via a Faedo--Galerkin approach. Afterwards, in Section~\ref{SECT:ASYMPTLIM}, we investigate the asymptotic limits $K\rightarrow 0$, $K\rightarrow\infty$, $L\rightarrow 0$ and $L\rightarrow\infty$, which prove the existence of weak solutions of \eqref{CH} in the limit cases. In Section~\ref{SECT:REG+CD}, under suitable additional assumptions,  we establish higher spatial regularity for the phase-fields in the context of weak solutions. In the case of constant mobilities, we eventually prove the continuous dependence of weak solutions on the velocity fields and the initial data. In particular, this continuous dependence result directly entails the uniqueness of the weak solution. 

\section{Notation, assumptions, and preliminaries} 
\label{SECT:PRELIM}

In this section, we introduce some notation, assumptions, and preliminaries that are supposed to hold throughout the remainder of this paper.

\subsection{Notation}
Let us first introduce some basic notation.

\begin{enumerate}[label=\textnormal{\bfseries(N\arabic*)}]
    \item $\N$ denotes the set of natural numbers excluding zero, whereas $\N_0 = \N\cup\{0\}$.
    
    \item Let $\Omega \subset \R^d$ with $d\in\{2,3\}$ be a bounded Lipschitz domain in $\R^d$, and let $\Gamma \coloneqq\del\Omega$ denote its boundary. For any $s\geq 0$ and $p\in[1,\infty]$, the Lebesgue and Sobolev spaces for functions mapping from $\Om$ to $\R$ are denoted as $L^p(\Om)$ and $W^{s,p}(\Om)$. We write $\norm{\cdot}_{L^p(\Om)}$ and $\norm{\cdot}_{W^{s,p}(\Om)}$ to denote the standard norms on these spaces. In the case $p=2$, we use the notation $H^s(\Om) = W^{s,2}(\Om)$. In particular, $H^0(\Om)$ can be identified with $L^2(\Om)$. The Lebesgue and Sobolev spaces on $\Ga$ are denoted by $L^p(\Ga)$ and $W^{s,p}(\Ga)$ along with the corresponding norms $\norm{\cdot}_{L^p(\Ga)}$ and $\norm{\cdot}_{W^{s,p}(\Ga)}$, respectively. 
    For vector-valued functions mapping from $\Om$ into $\R^d$, we use the notation $\mathbf{L}^p(\Om)$, $\mathbf{W}^{s,p}(\Om)$ and $\mathbf{H}^s(\Om)$. The spaces $\mathbf{L}^p(\Ga)$, $\mathbf{W}^{s,p}(\Ga)$ and $\mathbf{H}^s(\Ga)$ are defined analogously.
    For any real numbers $s\geq 0$ and $p\in[1,\infty]$ and any Banach space $X$, the Bochner spaces of functions mapping from an interval $I$ into $X$ are denoted by $L^p(I;X)$ and $W^{s,p}(I;X)$.
    Furthermore, for any interval $I$ and any Banach space $X$, the space $C(I;X)$ denotes the set of continuous functions mapping from $I$ to $X$.    
    
    \item For any Banach space $X$, its dual space is denoted by $X'$. The corresponding duality pairing of elements $\phi\in X'$ and $\zeta\in X$ is denoted by $\ang{\phi}{\zeta}_X$. If $X$ is a Hilbert space, we write $\scp{\cdot}{\cdot}_X$ to denote its inner product. 
    
    \item For any bounded domain $\Om\subset \R^d$ ($d\in\N$) with Lipschitz boundary $\Ga$, $u\in H^1(\Om)'$ and $v\in H^1(\Ga)'$, we write
    \begin{align*}
        \meano{u}\coloneqq 
        \frac{1}{\abs{\Om}}\ang{u}{1}_{H^1(\Om)},
        \qquad
        \meang{v}\coloneqq 
        \frac{1}{\abs{\Ga}}\ang{v}{1}_{H^1(\Ga)}
    \end{align*}
    to denote the generalized means of $u$ and $v$, respectively. Here, $\abs{\Om}$ denotes the $d$-dimensional Lebesgue measure of $\Omega$, whereas $\abs{\Ga}$ denotes the $(d-1)$-dimensional Hausdorff measure of $\Gamma$. If $u\in L^1(\Om)$ or $v\in L^1(\Ga)$, the generalized mean can be expressed as
    \begin{align*}
        \meano{u} = \frac{1}{\abs{\Om}} \intO u \dx,
        \qquad
        \meang{v} = \frac{1}{\abs{\Ga}} \intG v \dG,
    \end{align*}
    respectively.

    \item For any bounded domain $\Om\subset \R^d$ ($d\in\N$) with Lipschitz boundary $\Gamma:=\partial\Omega$, we introduce the spaces
    \begin{align*}
        \mathbf{L}^3_\Div(\Om) &\coloneqq \big\{\boldsymbol{v}\in\mathbf{L}^3(\Om) : \Div\;\boldsymbol{v} = 0 \ \text{in~} \Om, \ \boldsymbol{v}\cdot\mathbf{n} = 0 \ \text{on~} \Ga \big\}, \\
        \mathbf{L}^3_\tau(\Ga) &\coloneqq \big\{\boldsymbol{w}\in\mathbf{L}^3_\tau(\Ga) :  \boldsymbol{w}\cdot\mathbf{n} = 0 \ \text{on~} \Ga \big\}.
    \end{align*}
    We point out that in the definition of $\mathbf{L}^3_\Div(\Om)$, the relation $\Div\;\boldsymbol{v} = 0$ in $\Om$ has to be understood in the sense of distributions. This already implies that $\boldsymbol{v}\cdot\mathbf{n} \in H^{-1/2}(\Ga)$, and therefore, the relation $\boldsymbol{v}\cdot\mathbf{n} = 0$ on $\Ga$ is well-defined.
\end{enumerate}

\subsection{Assumptions}
We make the following general assumptions.

\begin{enumerate}[label=\textnormal{\bfseries(A\arabic*)}]
    \item  \label{ASSUMP:1} $\Omega$ is a non-empty, bounded Lipschitz domain in $\R^d$ with $d\in\{2,3\}$, whose boundary is denoted by $\Ga\coloneqq\del\Om$. Moreover, $T>0$ denotes an arbitrary final time and for brevity, we use the notation
    \begin{align*}
        Q\coloneqq \Om\times(0,T), \quad\Sigma\coloneqq\Ga\times(0,T).
    \end{align*}
    
    \item \label{ASSUMP:2} The constants in system \eqref{CH} satisfy $\epsilon, \epsilon_{\Gamma}, \kappa > 0$, and $\alpha, \beta\in \R$ with $\alpha\beta\abs{\Omega} + \abs{\Gamma} \neq 0$. (The latter condition is required to apply a certain bulk-surface Poincar\'e inequality, see \ref{PRELIM:POINCINEQ}.) Since the choice of $\epsilon_{\Gamma}, \epsilon$ and $\kappa$ has no impact on the mathematical analysis, we will simply set (without loss of generality) $\epsilon = \epsilon_{\Gamma} = \kappa = 1$ in the remainder of this paper.
    
    \item \label{ASSUMP:MOBILITY} The mobility functions $m_\Om:\R\rightarrow\R$ and $m_\Ga:\R\rightarrow\R$ are bounded, continuous and uniformly positive. This means that there exist constants $m_\Om^\ast, M_\Om^\ast, m_\Ga^\ast, M_\Ga^\ast>0$ such that
    \begin{align*}
        0 < m_\Om^\ast \leq m_\Om(s) \leq M_\Om^\ast \quad\text{and}\quad 0 < m_\Ga^\ast \leq m_\Ga(s) \leq M_\Ga^\ast \quad\text{for all $s\in\R$}.
    \end{align*}
    
    \item \label{ASSUMP:POTENTIALS:1} The potentials $F:\R\rightarrow [0,\infty)$ and $G:\R\rightarrow[0,\infty)$ are continuously differentiable and there exist exponents $p$ and $q$ satisfying
    \begin{align*}
        p\in\begin{cases}
            [2,\infty), &\text{if } d=2, \\
            [2,6], &\text{if } d=3,
        \end{cases}
        \quad\text{and}\quad q\in[2,\infty)
    \end{align*}
    as well as constants $c_{F^{\prime}},c_{G^{\prime}}\geq 0$ such that the first-order derivatives satisfy the growth conditions
    \begin{align}
        \abs{F^{\prime}(s)} &\leq c_{F^{\prime}}(1+\abs{s}^{p-1}), \label{GC:F'}\\
        \abs{G^{\prime}(s)} &\leq c_{G^{\prime}}(1+\abs{s}^{q-1}) \label{GC:G'}
    \end{align}
    for all $s\in\R$. 
    These assumptions already imply the existence of constants $c_F,c_G\geq 0$ such that $F$ and $G$ satisfy the growth conditions
    \begin{align}
        F(s) &\leq c_{F}(1+\abs{s}^p), \label{GC:F}\\
        G(s) &\leq c_{G}(1+\abs{s}^q) \label{GC:G}
    \end{align}
    for all $s\in\R$.
    \item \label{ASSUMP:POTENTIALS:2} There exist constants $a_F,a_G > 0$ and $b_F, b_G \geq 0$ such that the potentials $F$ and $G$ introduced in \ref{ASSUMP:POTENTIALS:1} satisfy 
    \begin{align}
        F(s) &\geq a_F\abs{s}^2 - b_F, \\
        G(s) &\geq a_G\abs{s}^2 - b_G
    \end{align}
    for all $s\in\R$.
    \item \label{ASSUMP:POTENTIALS:3} The potentials $F$ and $G$ introduced in \ref{ASSUMP:POTENTIALS:1} are twice continuously differentiable, and there exist constants $c_{F^{\prime\prime}}, c_{G^{\prime\prime}} \geq 0$ such that the second-order derivatives of $F$ and $G$ satisfy the growth conditions
    \begin{align}
        \abs{F^{\prime\prime}(s)} &\leq c_{F^{\prime\prime}}(1+\abs{s}^{p-2}), \label{GC:F''}\\
        \abs{G^{\prime\prime}(s)} &\leq c_{G^{\prime\prime}}(1+\abs{s}^{q-2}) \label{GC:G''}
    \end{align}
    for all $s\in\R$. 
    We point out that these assumptions already imply the growth conditions \eqref{GC:F'}--\eqref{GC:G} stated in \ref{ASSUMP:POTENTIALS:1}.
\end{enumerate}

\medskip

\begin{remark}
    A standard choice for $F$ and $G$ is the polynomial double-well potential
    \begin{equation*}
        W:\R\to\R,\quad W(s) = \tfrac 14 (s^2-1)^2.
    \end{equation*}
    It satisfies the assumptions \ref{ASSUMP:POTENTIALS:1}--\ref{ASSUMP:POTENTIALS:3} with $p=q=4$. However, singular potentials such as the logarithmic Flory--Huggins potential or the double-obstacle potential are not admissible as they do not satisfy any polynomial growth condition. 
    Nevertheless, the investigation of system \eqref{CH} with singular potentials is an interesting topic for future research.
\end{remark}

\subsection{Preliminaries}
% \textbf{Preliminaries.}

\begin{enumerate}[label=\textnormal{\bfseries(P\arabic*)}]
    \item For any real numbers $s\geq 0$ and $p\in[1,\infty]$, we set
    \begin{align*}
        \mathcal{L}^p \coloneqq L^p(\Om)\times L^p(\Ga), \quad\text{and}\quad \mathcal{H}^s\coloneqq H^s(\Om)\times H^s(\Ga),
    \end{align*}
    provided that the boundary is sufficiently regular.
    As usual, we identify $\mathcal{L}^2$ with $\mathcal{H}^0$. Note that $\mathcal{H}^s$ is a Hilbert space with respect to the inner product
    \begin{align*}
        \bigscp{\scp{\phi}{\psi}}{\scp{\zeta}{\xi}}_{\mathcal{H}^s} \coloneqq \scp{\phi}{\zeta}_{H^s(\Om)} + \scp{\psi}{\xi}_{H^s(\Ga)} \quad\text{for all } \scp{\phi}{\psi}, \scp{\zeta}{\xi}\in\mathcal{H}^s
    \end{align*}
    and its induced norm $\norm{\cdot}_{\mathcal{H}^s} \coloneqq \scp{\cdot}{\cdot}_{\mathcal{H}^s}^{1/2}$. We recall that the duality pairing can be expressed as
    \begin{align*}
        \ang{\scp{\phi}{\psi}}{\scp{\zeta}{\xi}}_{\mathcal{H}^1} = \scp{\phi}{\zeta}_{L^2(\Om)} + \scp{\psi}{\xi}_{L^2(\Ga)}
    \end{align*}
    if $\scp{\phi}{\psi}\in \mathcal{L}^2$ and $\scp{\zeta}{\xi}\in \mathcal{H}^1$.
    
    \item\label{PRE:HKA} Let $L\in[0,\infty]$ and $\beta\in\R$. We introduce the closed linear subspace
    \begin{align*}
        \mathcal{D}_\beta \coloneqq \{(\phi,\psi)\in\mathcal{H}^1 : \phi = \beta\psi \text{ a.e.~on } \Ga\} \subset\mathcal{H}^1
    \end{align*}
    and define
    \begin{align*}
        \mathcal{H}_{L,\beta}^1 \coloneqq
        \begin{cases}
            \mathcal{H}^1, &\text{if } L \in (0,\infty] , \\
            \mathcal{D}_\beta, &\text{if } L=0.
        \end{cases}
    \end{align*}
    Endowed with the inner product $\scp{\cdot}{\cdot}_{\mathcal{H}_{L,\beta}^1} \coloneqq \scp{\cdot}{\cdot}_{\mathcal{H}^1}$ and its induced norm, the space $\mathcal{H}_{L,\beta}^1$ is a Hilbert space. Moreover, we define the product
    \begin{align*}
        \ang{\scp{\phi}{\psi}}{\scp{\zeta}{\xi}}_{\mathcal{H}_{L,\beta}^1} \coloneqq \scp{\phi}{\zeta}_{L^2(\Om)} + \scp{\psi}{\xi}_{L^2(\Ga)}
    \end{align*}
    for all $\scp{\phi}{\psi}, \scp{\zeta}{\xi}\in\mathcal{L}^2$. By means of the Riesz representation theorem, this product can be extended to a duality pairing on $(\mathcal{H}_{L,\beta}^1)^\prime\times\mathcal{H}_{L,\beta}^1$, which will also be denoted as $\ang{\cdot}{\cdot}_{\mathcal{H}_{L,\beta}^1}$.
    
    \item Let $L\in[0,\infty]$ and $\beta\in\R$. We define the closed linear subspace
    \begin{align*}
        \mathcal{V}_{L,\beta}^1 &\coloneqq \begin{cases} 
        \{\scp{\phi}{\psi}\in\mathcal{H}^1_{L,\beta} : \beta\abs{\Om}\meano{\phi} + \abs{\Ga}\meang{\psi} = 0 \}, &\text{if~} L\in[0,\infty), \\
        \{\scp{\phi}{\psi}\in\mathcal{H}^1: \meano{\phi} = \meang{\psi} = 0 \}, &\text{if~}L=\infty.
        \end{cases} 
    \end{align*}
    Note that $\mathcal{V}_{L,\beta}^1$ is a Hilbert space with respect to the inner product $\scp{\cdot}{\cdot}_{\mathcal{H}^1}$ and its induced norm.
    
    \item Let $L\in[0,\infty]$ and $\beta\in\R$. We set
    \begin{align*}
        \h(L) \coloneqq
        \begin{cases}
            L^{-1}, &\text{if } L\in(0,\infty), \\
            0, &\text{if } L\in\{0,\infty\},
        \end{cases}
    \end{align*}
    and we define a bilinear form on $\mathcal{H}^1\times\mathcal{H}^1$ by
    \begin{align*}
         \bigscp{\scp{\phi}{\psi}}{\scp{\zeta}{\xi}}_{L,\beta} \coloneqq &\intO\Grad\phi\cdot\Grad\zeta \dx + \intG\Gradg\psi\cdot\Gradg\xi \dG \\  
         &\quad + \h(L)\intG (\beta\psi-\phi)(\beta\xi-\zeta)\dG
    \end{align*}
    for all $ \scp{\phi}{\psi}, \scp{\zeta}{\xi}\in\mathcal{H}^1$. Moreover, we set 
    \begin{align*}
        \norm{\scp{\psi}{\phi}}_{L,\beta} \coloneqq \bigscp{\scp{\phi}{\psi}}{\scp{\phi}{\psi}}_{L,\beta}^{1/2}
    \end{align*}
    for all $\scp{\phi}{\psi}\in\mathcal{H}^1$. The bilinear form $\scp{\cdot}{\cdot}_{L,\beta}$ defines an inner product on $\mathcal{V}^1_{L,\beta}$, and $\norm{\cdot}_{L,\beta}$ defines a norm on $\mathcal{V}^1_{L,\beta}$, that is equivalent to the norm $\norm{\cdot}_{\mathcal{H}^1}$ (see \cite[Corol\-lary~A.2]{Knopf2021}). Moreover, the space $\big(\mathcal{V}^1_{L,\beta}, \scp{\cdot}{\cdot}_{L,\beta}, \norm{\cdot}_{L,\beta}\big)$ is a Hilbert space.
    
    \item \label{PRELIM:bulk-surface-elliptic} For any $L\in[0,\infty]$ and $\beta\in\R$, we define the space
    \begin{align*}
        \mathcal{V}_{L,\beta}^{-1} \coloneqq \begin{cases} 
        \{\scp{\phi}{\psi}\in(\mathcal{H}^1_{L,\beta})^\prime : \beta\abs{\Om}\meano{\phi} + \abs{\Ga}\meang{\psi} = 0 \}, &\text{if~} L\in[0,\infty), \\
        \{\scp{\phi}{\psi}\in(\mathcal{H}^1)^\prime: \meano{\phi} = \meang{\psi} = 0 \}, &\text{if~}L=\infty.
        \end{cases}
    \end{align*}
    Using the Lax-Milgram theorem, one can show that for any $\scp{\phi}{\psi}\in\mathcal{V}_{L,\beta}^{-1}$, there exists a unique weak solution $\mathcal{S}_{L,\beta}(\phi,\psi) = \bigscp{\mathcal{S}_{L,\beta}^\Om(\phi,\psi)}{\mathcal{S}_{L,\beta}^\Ga(\phi,\psi)}\in\mathcal{V}^1_{L,\beta}$ to the following elliptic problem with bulk-surface coupling:
    \pagebreak[2]
    \begin{subequations}\label{SYSTEM:ELLIPTIC}
        \begin{alignat}{3}
            -\Lap\mathcal{S}_{L,\beta}^\Om &= - \phi \qquad &&\text{in } \Om, \\
            -\Lapg\mathcal{S}_{L,\beta}^\Ga + \beta\deln\mathcal{S}_{L,\beta}^\Om &= -\psi &&\text{on } \Ga, \\
            L\deln\mathcal{S}_{L,\beta}^\Om &= \beta\mathcal{S}_{L,\beta}^\Ga - \mathcal{S}_{L,\beta}^\Om \qquad&&\text{on } \Ga.
        \end{alignat}
    \end{subequations}
    This means that $\mathcal{S}_{L,\beta}(\phi,\psi)$ satisfies the weak formulation
    \begin{align*}
        \bigscp{S_{L,\beta}(\phi,\psi)}{\scp{\zeta}{\xi}}_{L,\beta} = - \ang{\scp{\phi}{\psi}}{\scp{\zeta}{\xi}}_{\mathcal{H}^1_{L,\beta}}
    \end{align*}
    for all test functions $\scp{\xi}{\zeta}\in\mathcal{H}^1_{L,\beta}$. Consequently, we have
    % there exists a constant $C\geq 0$ such that
    \begin{align}\label{EST:solution-operator}
        \norm{\mathcal{S}_{L,\beta}(\phi,\psi)}_{\mathcal{H}^1} \leq C\norm{\scp{\phi}{\psi}}_{(\mathcal{H}^1_{L,\beta})^\prime}
    \end{align}
    for all $\scp{\phi}{\psi}\in\mathcal{V}^{-1}_{L,\beta}$, for a constant $C\geq 0$ depending only on $\Omega$, $L$ and $\beta$. We can thus define the solution operator
    \begin{align*}
        \mathcal{S}_{L,\beta}:\mathcal{V}_{L,\beta}^{-1}\rightarrow\mathcal{V}^1_{L,\beta}, \quad \scp{\phi}{\psi}\mapsto\mathcal{S}_{L,\beta}(\phi,\psi) = \bigscp{\mathcal{S}_{L,\beta}^\Om(\phi,\psi)}{\mathcal{S}_{L,\beta}^\Ga(\phi,\psi)},
    \end{align*}
    as well as an inner product and its induced norm on $\mathcal{V}^{-1}_{L,\beta}$ by
    \begin{align*}
        \bigscp{\scp{\phi}{\psi}}{\scp{\zeta}{\xi}}_{L,\beta,\ast} &\coloneqq \bigscp{\mathcal{S}_{L,\beta}(\phi,\psi)}{\mathcal{S}_{L,\beta}(\zeta,\xi)}_{L,\beta}, \\
        \norm{\scp{\phi}{\psi}}_{L,\beta,\ast} &\coloneqq \bigscp{\scp{\phi}{\psi}}{\scp{\phi}{\psi}}_{L,\beta,\ast}^{1/2},
    \end{align*}
    for $\scp{\phi}{\psi}, \scp{\zeta}{\xi}\in\mathcal{V}_{L,\beta}^{-1}$. This norm is equivalent to the norm $\norm{\cdot}_{(\mathcal{H}^1_{L,\beta})^\prime}$ on $\mathcal{V}_{L,\beta}^{-1}$. For the case $L\in (0,\infty)$, we refer the reader to \cite[Theorem 3.3 and Corollary 3.5]{Knopf2021} for a proof of these statements. In the other cases, the results can be proven analogously.

    \item \label{PRELIM:POINCINEQ} We further recall the following \textit{bulk-surface Poincar\'{e} inequalitiy}, which has been established in \cite[Lemma A.1]{Knopf2021}: \\[0.3em]
    Let $K\in[0,\infty)$ and $\alpha,\beta\in\mathbb{R}$ with $\alpha\beta\abs{\Om} + \abs{\Ga} \neq 0$ be arbitrary. Then there exists a constant $C_P >0$ depending only on $K, \alpha, \beta$ and $\Omega$ such that
    \begin{align}\label{EQ:BSPI1}
        \norm{\scp{\phi}{\psi}}_{\mathcal{L}^2} \leq C_P \norm{\scp{\phi}{\psi}}_{K,\alpha}
    \end{align}
    for all $\scp{\phi}{\psi}\in\mathcal{H}^1_{K,\alpha}$ satisfying $\beta\abs{\Om}\meano{\phi} + \abs{\Ga}\meang{\psi} = 0$. 
\end{enumerate}

\medskip

We conclude this section by presenting a simple inequality that will be frequently used in our mathematical analysis.
\begin{lemma}\label{LEMMA:DUALITY}
    Let $K\in[0,\infty]$ and $\alpha\in\R$. Then, it holds
    \begin{align*}
        \norm{\scp{\zeta}{\xi}}^2_{\mathcal{L}^2} \leq 2 \norm{\scp{\zeta}{\xi}}_{(\mathcal{H}^1_{K,\alpha})^\prime}\norm{\scp{\Grad\zeta}{\Gradg\xi}}_{\mathcal{L}^2} + \norm{\scp{\zeta}{\xi}}_{(\mathcal{H}^1_{K,\alpha})^\prime}^2.
    \end{align*}
    for all $\scp{\zeta}{\xi}\in\mathcal{H}^1_{K,\alpha}$.
\end{lemma}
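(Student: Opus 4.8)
The plan is to exploit the duality pairing $\ang{\cdot}{\cdot}_{\mathcal{H}^1_{K,\alpha}}$ introduced in \ref{PRE:HKA}, which coincides with the $\mathcal{L}^2$-inner product on $\mathcal{L}^2\times\mathcal{H}^1_{K,\alpha}$. Given $\scp{\zeta}{\xi}\in\mathcal{H}^1_{K,\alpha}$, this space embeds continuously into $\mathcal{L}^2$ and hence into $(\mathcal{H}^1_{K,\alpha})^\prime$, so the right-hand side makes sense. The key observation is that testing $\scp{\zeta}{\xi}$ against itself yields $\norm{\scp{\zeta}{\xi}}_{\mathcal{L}^2}^2 = \ang{\scp{\zeta}{\xi}}{\scp{\zeta}{\xi}}_{\mathcal{H}^1_{K,\alpha}}$, and this quantity is bounded by $\norm{\scp{\zeta}{\xi}}_{(\mathcal{H}^1_{K,\alpha})^\prime}\,\norm{\scp{\zeta}{\xi}}_{\mathcal{H}^1_{K,\alpha}}$. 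It therefore suffices to control the full $\mathcal{H}^1$-norm of $\scp{\zeta}{\xi}$ by its gradient part plus its dual norm.

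First I would recall that $\norm{\scp{\zeta}{\xi}}_{\mathcal{H}^1}^2 = \norm{\scp{\zeta}{\xi}}_{\mathcal{L}^2}^2 + \norm{\scp{\Grad\zeta}{\Gradg\xi}}_{\mathcal{L}^2}^2$, since the $\mathcal{H}^1$-norm decomposes into the $\mathcal{L}^2$-norm of the function and the $\mathcal{L}^2$-norm of its (bulk and surface) gradient. Using $\sqrt{a+b}\le\sqrt a+\sqrt b$, this gives $\norm{\scp{\zeta}{\xi}}_{\mathcal{H}^1}\le\norm{\scp{\zeta}{\xi}}_{\mathcal{L}^2}+\norm{\scp{\Grad\zeta}{\Gradg\xi}}_{\mathcal{L}^2}$. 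Plugging this into the estimate $\norm{\scp{\zeta}{\xi}}_{\mathcal{L}^2}^2\le\norm{\scp{\zeta}{\xi}}_{(\mathcal{H}^1_{K,\alpha})^\prime}\norm{\scp{\zeta}{\xi}}_{\mathcal{H}^1}$ from the previous step, I obtain
\begin{align*}
    \norm{\scp{\zeta}{\xi}}_{\mathcal{L}^2}^2 \le \norm{\scp{\zeta}{\xi}}_{(\mathcal{H}^1_{K,\alpha})^\prime}\norm{\scp{\zeta}{\xi}}_{\mathcal{L}^2} + \norm{\scp{\zeta}{\xi}}_{(\mathcal{H}^1_{K,\alpha})^\prime}\norm{\scp{\Grad\zeta}{\Gradg\xi}}_{\mathcal{L}^2}.
\end{align*}
The first term on the right still contains $\norm{\scp{\zeta}{\xi}}_{\mathcal{L}^2}$, so I would absorb it via Young's inequality in the form $ab\le\tfrac12 a^2+\tfrac12 b^2$ applied to $a=\norm{\scp{\zeta}{\xi}}_{(\mathcal{H}^1_{K,\alpha})^\prime}$ and $b=\norm{\scp{\zeta}{\xi}}_{\mathcal{L}^2}$. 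This produces $\tfrac12\norm{\scp{\zeta}{\xi}}_{\mathcal{L}^2}^2$, which can be subtracted from the left-hand side, and $\tfrac12\norm{\scp{\zeta}{\xi}}_{(\mathcal{H}^1_{K,\alpha})^\prime}^2$, which survives. Multiplying the resulting inequality by $2$ yields exactly the claimed bound.

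There is no real obstacle here; the statement is a soft consequence of the identification of the duality pairing with the $\mathcal{L}^2$-inner product together with Cauchy--Schwarz and Young's inequality. The only point that warrants a line of justification is the legitimacy of writing $\norm{\scp{\zeta}{\xi}}_{\mathcal{L}^2}^2 = \ang{\scp{\zeta}{\xi}}{\scp{\zeta}{\xi}}_{\mathcal{H}^1_{K,\alpha}}$ and then bounding the latter by $\norm{\scp{\zeta}{\xi}}_{(\mathcal{H}^1_{K,\alpha})^\prime}\norm{\scp{\zeta}{\xi}}_{\mathcal{H}^1_{K,\alpha}}$, which is just the definition of the operator norm on the dual space combined with the fact that $\scp{\zeta}{\xi}\in\mathcal{L}^2\subset(\mathcal{H}^1_{K,\alpha})^\prime$. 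Everything else is elementary algebra with norms, and the constant $2$ is precisely what Young's absorption produces.
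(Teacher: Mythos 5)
Your argument is correct and is essentially the paper's own proof: both rest on the identity $\norm{\scp{\zeta}{\xi}}_{\mathcal{L}^2}^2 = \ang{\scp{\zeta}{\xi}}{\scp{\zeta}{\xi}}_{\mathcal{H}^1_{K,\alpha}}$, the bound $\norm{\scp{\zeta}{\xi}}_{\mathcal{H}^1_{K,\alpha}} \le \norm{\scp{\zeta}{\xi}}_{\mathcal{L}^2} + \norm{\scp{\Grad\zeta}{\Gradg\xi}}_{\mathcal{L}^2}$, and a Young absorption of the $\mathcal{L}^2$ term. No gaps; the constant $2$ arises exactly as you describe.
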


\begin{proof}
    Let $(\zeta,\xi) \in \mathcal{H}^1_{K,\alpha}$ be arbitrary. Recalling the definition of $\mathcal{H}^1_{K,\alpha}$ (see \ref{PRE:HKA}), we have
    \begin{equation*}
        \norm{(\zeta,\xi)}_{\mathcal{H}^1_{K,\alpha}} = \norm{(\zeta,\xi)}_{\mathcal{H}^1}\le \norm{(\zeta,\xi)}_{\mathcal{L}^2} + \norm{(\Grad\zeta,\Gradg\xi)}_{\mathcal{L}^2}.
    \end{equation*}
    Using Young's inequality, we deduce
    \begin{align*}
        \norm{\scp{\zeta}{\xi}}_{\mathcal{L}^2}^2 & = \ang{\scp{\zeta}{\xi}}{\scp{\zeta}{\xi}}_{\mathcal{H}^1_{K,\alpha}} \\
        &\leq \norm{\scp{\zeta}{\xi}}_{(\mathcal{H}^1_{K,\alpha})^\prime}\norm{\scp{\zeta}{\xi}}_{\mathcal{H}^1_{K,\alpha}} \\
        &\leq  \norm{\scp{\zeta}{\xi}}_{(\mathcal{H}^1_{K,\alpha})^\prime}\norm{\scp{\zeta}{\xi}}_{\mathcal{L}^2} + \norm{\scp{\zeta}{\xi}}_{(\mathcal{H}^1_{K,\alpha})^\prime}\norm{\scp{\Grad\zeta}{\Gradg\xi}}_{\mathcal{L}^2} \\
        &\leq \tfrac{1}{2} \norm{\scp{\zeta}{\xi}}_{\mathcal{L}^2}^2 + \tfrac{1}{2}\norm{\scp{\zeta}{\xi}}_{(\mathcal{H}^1_{K,\alpha})^\prime}^2 + \norm{\scp{\zeta}{\xi}}_{(\mathcal{H}^1_{K,\alpha})^\prime}\norm{\scp{\Grad\zeta}{\Gradg\xi}}_{\mathcal{L}^2}.
    \end{align*}
    Hence, the claim directly follows.
\end{proof}

\section{Main results}
\label{SECT:MAINRESULTS}

As mentioned in \ref{ASSUMP:2}, we set $\epsilon = \epsilon_{\Gamma} = \kappa = 1$. This does not mean any loss of generality as the exact values of $\epsilon$, $\epsilon_{\Gamma}$ and $\kappa$ do not have any impact on the mathematical analysis (as long as they are positive). By this choice, the system \eqref{CH} can be restated as follows:
\begin{subequations}\label{EQ:SYSTEM}
    \begin{align}
        \label{EQ:SYSTEM:1}
        &\delt\phi + \Div(\phi\boldsymbol{v}) = \Div(m_\Om(\phi)\Grad\mu) && \text{in} \ Q, \\
        \label{EQ:SYSTEM:2}
        &\mu = -\Lap\phi + F'(\phi)   && \text{in} \ Q, \\
        \label{EQ:SYSTEM:3}
        &\delt\psi + \Divg(\psi\boldsymbol{w}) = \Divg(m_\Ga(\psi)\Gradg\theta) - \beta m_\Om(\phi)\deln\mu && \text{on} \ \Sigma, \\
        \label{EQ:SYSTEM:4}
        &\theta = - \Lapg\psi + G'(\psi) + \alpha\deln\phi && \text{on} \ \Sigma, \\
        \label{EQ:SYSTEM:5}
        &\begin{cases} K\deln\phi = \alpha\psi - \phi &\text{if} \ K\in [0,\infty), \\
        \deln\phi = 0 &\text{if} \ K = \infty
        \end{cases} && \text{on} \ \Sigma, \\
        \label{EQ:SYSTEM:6}
        &\begin{cases} 
        L m_\Om(\phi)\deln\mu = \beta\theta - \mu &\text{if} \  L\in[0,\infty), \\
        m_\Om(\phi)\deln\mu = 0 &\text{if} \ L=\infty
        \end{cases} &&\text{on} \ \Sigma, \\
        \label{EQ:SYSTEM:7}
        &\phi\vert_{t=0} = \phi_0 &&\text{in} \ \Om, \\
        \label{EQ:SYSTEM:8}
        &\psi\vert_{t=0} = \psi_0 &&\text{on} \ \Ga.
    \end{align}
\end{subequations}

The total energy associated with this system reads as
\begin{align*}
    \begin{split}
        E_K(\phi,\psi) &= \intO\frac{1}{2} \abs{\Grad\phi}^2 + F(\phi) \dx + \intG\frac{1}{2} \abs{\Gradg\psi}^2 + G(\psi) \dG 
        % \\
        % &\quad 
        + \h(K) \intG \frac{1}{2} \abs{\alpha\psi - \phi}^2 \dG.
    \end{split}
\end{align*}

We now introduce the notion of a weak solution to system \eqref{EQ:SYSTEM}.

\begin{definition}[Weak solutions of system \eqref{EQ:SYSTEM} for {$K,L\in [0,\infty]$}]\label{DEF:WS}
Suppose that assumptions \ref{ASSUMP:1}--\ref{ASSUMP:POTENTIALS:1} hold. Let $K,L\in[0,\infty]$, let $(\phi_0,\psi_0)\in\mathcal{H}^1_{K,\alpha}$ be arbitrary initial data and let $\boldsymbol{v}\in L^2(0,T;\mathbf{L}_{\Div}^3(\Om))$ and $\boldsymbol{w}\in L^2(0,T;\mathbf{L}^3_\tau(\Ga))$ be given velocity fields. The quadruplet $(\phi,\psi,\mu,\theta)$ is called a weak solution of the system \eqref{EQ:SYSTEM} on $[0,T]$ if the following properties hold:
\begin{enumerate}[label=\textnormal{(\roman*)}, ref=\thetheorem(\roman*)]
    \item \label{DEF:WS:REG} The functions $\phi$, $\psi$, $\mu$ and $\theta$ have the following regularity:
    \begin{subequations}
        \begin{align}
            &\scp{\phi}{\psi} \in C([0,T];\mathcal{L}^2)\cap H^1(0,T;(\mathcal{H}_{L,\beta}^1)^\prime)\cap L^\infty(0,T;\mathcal{H}_{K,\alpha}^1), \label{REGPP}\\
            &\scp{\mu}{\theta}\in L^2(0,T,\mathcal{H}_{L,\beta}^1) \label{REGMT}.
        \end{align}
    \end{subequations}
    \item \label{DEF:WS:IC} The functions $\phi$ and $\psi$ satisfy the initial conditions
    \begin{align}
        \phi\vert_{t=0} = \phi_0 \quad\text{a.e.~in } \Omega, \quad\text{and} \quad\psi\vert_{t=0} = \psi_0 \quad\text{a.e.~on }\Gamma.
    \end{align}
    \item \label{DEF:WS:WF} The functions $\phi$, $\psi$, $\mu$ and $\theta$ satisfy the weak formulation
    \begin{subequations}\label{WF}
        \begin{align}
            &\ang{\scp{\delt\phi}{\delt\psi}}{\scp{\zeta}{\xi}}_{\mathcal{H}_{L,\beta}^1} - \intO \phi\boldsymbol{v}\cdot\Grad\zeta\dx - \intG \psi\boldsymbol{w}\cdot\Gradg\xi\dG \nonumber \\
            &\quad= - \intO m_\Om(\phi)\Grad\mu\cdot\Grad\zeta\dx - \intG  m_\Ga(\psi)\Gradg\theta\cdot\Gradg\xi\dG \label{WF:PP}\\
            &\qquad - \h(L)\intG(\beta\theta-\mu)(\beta\xi - \zeta)\dG, \nonumber\\
            &\intO \mu\,\eta\dx + \intG\theta\,\vartheta\dG 
            \nonumber\\
            &\quad = \intO\Grad\phi\cdot\Grad\eta + F'(\phi)\eta \dx + \intG\Gradg\psi\cdot\Gradg\vartheta + G'(\psi)\vartheta \dG \label{WF:MT}\\
            &\qquad + \h(K)\intG(\alpha\psi-\phi)(\alpha\vartheta - \eta) \dG, \nonumber
        \end{align}
    \end{subequations}
    a.e.~on $[0,T]$ for all $\scp{\zeta}{\xi}\in\mathcal{H}_{L,\beta}^1, \scp{\eta}{\vartheta}\in\mathcal{H}_{K,\alpha}^1$.
    \item \label{DEF:WS:MCL} The functions $\phi$ and $\psi$ satisfy the mass conservation law
    \begin{align}\label{MCL}
        \begin{dcases}
            \beta\intO \phi(t)\dx + \intG \psi(t)\dG = \beta\intO \phi_0 \dx + \intG \psi_0\dG, &\textnormal{if } L\in[0,\infty), \\
            \intO\phi(t)\dx = \intO\phi_0\dx \quad\textnormal{and}\quad \intG\psi(t)\dG = \intG\psi_0\dG, &\textnormal{if } L = \infty
        \end{dcases}
    \end{align}
    for all $t\in[0,T]$.
    \item \label{DEF:WS:WEDL} The functions $\phi, \psi, \mu$ and $\theta$ satisfy the energy inequality
    \begin{align}\label{WEDL}
        \begin{split}
            &E_K(\phi(t),\psi(t)) + \int_0^t\intO m_\Om(\phi)\abs{\Grad\mu}^2\dxs + \int_0^t\intG m_\Ga(\psi)\abs{\Gradg\theta}^2\dGs \\
            &\quad + \h(L) \int_0^t\intG (\beta\theta-\mu)^2\dGs \\
            &\quad - \int_0^t\intO\phi\boldsymbol{v}\cdot\Grad\mu\dxs - \int_0^t\intG \psi\boldsymbol{w}\cdot\Gradg\theta\dGs \\
            &\leq E_K(\phi_0,\psi_0)
        \end{split}
    \end{align}
    for all $t\in[0,T]$.
\end{enumerate}
\end{definition}

Now, we are ready to formulate the main results of this paper. The first main result provides the existence of a weak solution to system \eqref{EQ:SYSTEM} in all cases $(K,L)\in[0,\infty]^2$.

\begin{theorem}\textnormal{(Existence of weak solutions of \eqref{EQ:SYSTEM})}\label{THEOREM:EOWS}
Suppose that the assumptions \ref{ASSUMP:1}--\ref{ASSUMP:POTENTIALS:1} hold. Let $K,L\in[0,\infty]$, let $\scp{\phi_0}{\psi_0}\in\mathcal{H}^1_{K,\alpha}$ be arbitrary initial data, and let $\boldsymbol{v}\in L^2(0,T;\mathbf{L}_\Div^3(\Om))$ and $\boldsymbol{w}\in L^2(0,T;\mathbf{L}^3_\tau(\Ga))$ be given velocity fields. In the case $K\in \{0,\infty\}$, we further assume that \ref{ASSUMP:POTENTIALS:2} holds. Then, there exists a weak solution $(\phi,\psi,\mu,\theta)$ of the system \eqref{EQ:SYSTEM} in the sense of Definition \ref{DEF:WS}, which has the additional regularity
\begin{align}
    \label{REG:1}
    (\mu,\theta) \in L^4(0,T;\mathcal{L}^2) \quad \text{if } K\in(0,\infty].
\end{align}
Let us now assume that \ref{ASSUMP:POTENTIALS:3} holds, that the domain $\Om$ is of class $C^k$ for $k\in\{2,3\}$, and in the case $d=3$, we further assume that \ref{ASSUMP:POTENTIALS:1} holds with $p\leq 4$. Then, we additionally have
\begin{alignat}{2}
    \label{REG:2}
    &\scp{\phi}{\psi}\in L^4(0,T;\mathcal{H}^2) \quad &&\text{if $K\in(0,\infty]$}, \\
    \label{REG:3}
    &\scp{\phi}{\psi}\in L^2(0,T;\mathcal{H}^k) &&\text{for $k=2,3$}, \\
    \label{REG:4}
    &\scp{\phi}{\psi}\in C([0,T];\mathcal{H}^1) \quad 
    &&\text{if $(K,L)\in[0,\infty]\times (0,\infty]$ and $k=3$}
\end{alignat}
and the equations %\eqref{EQ:SYSTEM:2}, \eqref{EQ:SYSTEM:4} and \eqref{EQ:SYSTEM:5}
\begin{align*}
    % \label{STRG:MU}
    &\mu = -\Lap\phi + F'(\phi) &&\text{a.e.~in } Q, \\
    % \label{STRG:THETA}
    &\theta = -\Lapg\psi + G'(\psi) + \alpha\deln\phi &&\text{a.e.~on } \Sigma, \\
    % \label{STRG:PHIPSI}
    & \begin{cases} 
        K\deln\phi = \alpha\psi - \phi &\text{if} \ K\in [0,\infty), \\
        \deln\phi = 0 &\text{if} \ K = \infty
    \end{cases} &&  \text{a.e.~on } \Sigma
\end{align*}
are fulfilled in the strong sense.
\end{theorem}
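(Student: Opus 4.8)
The plan is to prove the theorem in three stages that mirror Sections~\ref{SECT:EXISTENCE}--\ref{SECT:REG+CD}, beginning with the \textbf{balanced regime $(K,L)\in(0,\infty)^2$ via Faedo--Galerkin}. Here $\mathcal{H}^1_{K,\alpha}=\mathcal{H}^1_{L,\beta}=\mathcal{H}^1$, so all test-function spaces coincide. I would project \eqref{WF} onto the spans of finitely many eigenfunctions of suitable bulk--surface elliptic operators (built from $\scp{\cdot}{\cdot}_{K,\alpha}$ and $\scp{\cdot}{\cdot}_{L,\beta}$), solve the resulting finite-dimensional system on a maximal interval by a Carath\'eodory-type existence theorem, and derive uniform estimates from the discrete analogue of \eqref{WEDL}: testing the discretized \eqref{WF:PP} with $\scp{\mu_m}{\theta_m}$ and the discretized \eqref{WF:MT} with $\scp{\delt\phi_m}{\delt\psi_m}$ yields $\ddt E_K(\phi_m,\psi_m)$ together with the dissipation terms, while the convective contributions are absorbed via $\intO\phi_m\boldsymbol{v}\cdot\Grad\mu_m\leq\norm{\phi_m}_{L^6(\Om)}\norm{\boldsymbol{v}}_{\mathbf{L}^3(\Om)}\norm{\Grad\mu_m}_{\mathbf{L}^2(\Om)}$ and Young's inequality, using $\boldsymbol{v}\in L^2(0,T;\mathbf{L}^3_\Div(\Om))$, the embedding $H^1(\Om)\emb L^6(\Om)$, and a Gr\"onwall argument; coercivity of $E_K$ is ensured here by the $\h(K)$-penalty and the bulk--surface Poincar\'e inequality \ref{PRELIM:POINCINEQ}. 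Testing \eqref{WF:MT} with constants and invoking \eqref{GC:F'}--\eqref{GC:G'} controls the generalized means $\meano{\mu_m},\meang{\theta_m}$, which with the dissipation bound gives $\scp{\mu_m}{\theta_m}\in L^2(0,T;\mathcal{H}^1)$; crucially, testing instead with $\bigscp{\mu_m-\meano{\mu_m}}{\theta_m-\meang{\theta_m}}$ makes the right-hand side \emph{linear} in $\scp{\Grad\mu_m}{\Gradg\theta_m}$, so squaring yields a uniform bound for $\scp{\mu_m}{\theta_m}$ in $L^4(0,T;\mathcal{L}^2)$ --- the origin of \eqref{REG:1}. A bound for $\scp{\delt\phi_m}{\delt\psi_m}$ in $L^2(0,T;(\mathcal{H}^1)')$ follows from \eqref{WF:PP}, and the standard compactness package (Banach--Alaoglu; Aubin--Lions--Simon, giving strong $\mathcal{L}^2$- and hence a.e.\ convergence of $(\phi_m,\psi_m)$; Vitali together with the continuity and growth of $F',G'$ for the nonlinearities; weak lower semicontinuity for \eqref{WEDL}) lets me pass to the limit $m\to\infty$. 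The property $\scp{\phi}{\psi}\in C([0,T];\mathcal{L}^2)$ and the attainment of the initial data follow from $\scp{\phi}{\psi}\in L^\infty(0,T;\mathcal{H}^1)\cap H^1(0,T;(\mathcal{H}^1)')$ by a standard interpolation argument.

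\textbf{Stage 2: all remaining parameters via asymptotic limits.} For every other pair $(K,L)$ I would take sequences of parameters tending to $0$ or $\infty$ and pass to the limit in the solutions just constructed, as in Section~\ref{SECT:ASYMPTLIM}. The energy inequality \eqref{WEDL} and the mass conservation law \eqref{MCL} yield bounds uniform in the degenerating parameter; in the limits $K\to\{0,\infty\}$ the $\h(K)$-term disappears, so coercivity of the energy must be supplied instead by \ref{ASSUMP:POTENTIALS:2}, which is exactly why that assumption is required when $K\in\{0,\infty\}$. Along $K_n\to 0$, the uniform bound $\h(K_n)\norm{\alpha\psi_n-\phi_n}_{L^2(\Sigma)}^2\leq C$ forces $\phi=\alpha\psi$ a.e.\ on $\Sigma$ (so the limit pair lies in $\mathcal{H}^1_{0,\alpha}=\mathcal{D}_\alpha$), recovering \eqref{EQ:SYSTEM:5} with $K=0$; along $K_n\to\infty$ the $\h(K)$-term simply drops. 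The cases $L_n\to 0$ and $L_n\to\infty$ are analogous: the former produces $\mu=\beta\theta$ on $\Sigma$, hence $\scp{\mu}{\theta}\in L^2(0,T;\mathcal{D}_\beta)$, while the latter drives the flux $m_\Om(\phi)\deln\mu$ to zero and splits the coupled conservation law in \eqref{MCL} into its two separate forms. In each case the weak formulation, mass conservation law, and energy inequality pass to the limit by the same compactness tools as in Stage~1.

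\textbf{Stage 3: higher regularity and strong equations.} Freezing time, I would read \eqref{WF:MT} as the weak formulation of a stationary bulk--surface elliptic problem for $\scp{\phi}{\psi}$ with data $\bigscp{\mu-F'(\phi)}{\theta-G'(\psi)}$ and coupling $K\deln\phi=\alpha\psi-\phi$, an inhomogeneous analogue of \eqref{SYSTEM:ELLIPTIC} (cf.~\ref{PRELIM:bulk-surface-elliptic}). Since $\scp{\phi}{\psi}\in L^\infty(0,T;\mathcal{H}^1)\emb L^\infty(0,T;\mathcal{L}^6)$ and, under \ref{ASSUMP:POTENTIALS:3} with $p\leq 4$ when $d=3$, $\abs{F'(s)}\leq c_{F'}(1+\abs{s}^{p-1})$ with $p-1\leq 3$, we get $\bigscp{F'(\phi)}{G'(\psi)}\in L^\infty(0,T;\mathcal{L}^2)$. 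The $\mathcal{H}^2$-estimate for this system on a $C^2$-domain then gives $\norm{\scp{\phi(t)}{\psi(t)}}_{\mathcal{H}^2}\leq C\big(\norm{\scp{\mu(t)}{\theta(t)}}_{\mathcal{L}^2}+1\big)$; integrating and using $\scp{\mu}{\theta}\in L^2(0,T;\mathcal{H}^1_{L,\beta})\emb L^2(0,T;\mathcal{L}^2)$ yields \eqref{REG:3} for $k=2$, and combining with \eqref{REG:1} yields \eqref{REG:2} when $K\in(0,\infty]$. For $k=3$ I would bootstrap: once $\scp{\phi}{\psi}\in L^2(0,T;\mathcal{H}^2)\emb L^2(0,T;\mathbf{W}^{1,6}(\Om))$ and $\abs{F''(s)}\leq c_{F''}(1+\abs{s}^{p-2})$ with $p-2\leq 2$, one has $\Grad F'(\phi)=F''(\phi)\Grad\phi\in L^2(0,T;\mathbf{L}^2(\Om))$ (product of $L^\infty(0,T;L^3(\Om))$ with $L^2(0,T;\mathbf{L}^6(\Om))$), hence $\bigscp{F'(\phi)}{G'(\psi)}\in L^2(0,T;\mathcal{H}^1)$, and the corresponding $\mathcal{H}^3$-estimate on a $C^3$-domain gives \eqref{REG:3} for $k=3$. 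These regularities suffice to identify, by a comparison argument, the equations for $\mu$, $\theta$ and $\deln\phi$ in the strong sense.

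\textbf{The hard part} is the bulk--surface elliptic regularity theory itself --- the $\mathcal{H}^2$/$\mathcal{H}^3$ estimates up to the boundary for a system whose surface equation carries the term $\Lapg\psi$ and the normal trace $\deln\phi$, coupled to the bulk through the Robin parameter $K$ --- and the proof of \eqref{REG:4}. For the latter I would combine weak continuity $\scp{\phi}{\psi}\in C_w([0,T];\mathcal{H}^1)$ (from $L^\infty(0,T;\mathcal{H}^1)\cap C([0,T];\mathcal{L}^2)$) with continuity of $t\mapsto E_K(\phi(t),\psi(t))$; the latter follows from a rigorous version of the energy identity \eqref{INTRO:ENERGY:ID}, obtained by testing \eqref{WF:PP} with $\scp{\mu}{\theta}$, testing \eqref{WF:MT} with $\scp{\delt\phi}{\delt\psi}$, and applying a chain-rule lemma --- legitimate precisely when $L\in(0,\infty]$, since then $\mathcal{H}^1_{L,\beta}=\mathcal{H}^1$ and no constraint has to be carried onto $\scp{\delt\phi}{\delt\psi}$, and when $k=3$, since then $-\Lap\phi\in L^2(0,T;H^1(\Om))$ makes the chain rule for $\ddt\big(\tfrac12\intO\abs{\Grad\phi}^2\dx+\intO F(\phi)\dx\big)$ rigorous. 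Since the right-hand side of \eqref{INTRO:ENERGY:ID} lies in $L^1(0,T)$, the map $E_K(\phi(\cdot),\psi(\cdot))$ is absolutely continuous, and norm continuity of $E_K$ together with weak $\mathcal{H}^1$-continuity upgrades to strong $\mathcal{H}^1$-continuity, giving \eqref{REG:4}.
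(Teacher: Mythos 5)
Your proposal is correct and follows essentially the same three-stage architecture as the paper: Faedo--Galerkin for $(K,L)\in(0,\infty)^2$, asymptotic limits in $K$ and $L$ for the remaining cases (with \ref{ASSUMP:POTENTIALS:2} replacing the $K$-dependent bulk-surface Poincar\'e inequality exactly where the paper invokes it), and frozen-time bulk-surface elliptic regularity plus a chain-rule argument for \eqref{REG:2}--\eqref{REG:4}. The only places where your implementation genuinely deviates are minor: for the $L^4(0,T;\mathcal{L}^2)$ bound \eqref{REG:1} you control the means of $\scp{\mu_m}{\theta_m}$ by testing \eqref{WF:MT} with constants and then test with the mean-free part so that $\norm{\scp{\mu_m}{\theta_m}}_{\mathcal{L}^2}^2$ comes out linear in the gradients, whereas the paper packages the same idea as a duality estimate (Lemma~\ref{LEMMA:DUALITY}) against a uniform $(\mathcal{H}^1)'$-bound --- the two are interchangeable here, though the paper's version transfers more cleanly to the limit cases $K=0$ and $L=0$ where the admissible test space shrinks to $\mathcal{D}_\alpha$ or $\mathcal{D}_\beta$ and constants of the form $\scp{1}{0}$ are no longer admissible; and for \eqref{REG:4} you go through weak $\mathcal{H}^1$-continuity plus continuity of the energy, while the paper appeals directly to the chain-rule Proposition~\ref{CR}, which encapsulates the same mollification-in-time argument and whose hypothesis $\scp{-\Lap\phi}{-\Lapg\psi+\alpha\deln\phi}\in\mathcal{H}^1_{L,\beta}$ is the precise reason the case $L=0$ is excluded (it would force a Dirichlet compatibility between $-\Lap\phi$ and $-\Lapg\psi+\alpha\deln\phi$ on $\Gamma$), which matches your restriction to $L\in(0,\infty]$.
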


\begin{remark}
    The continuity property \eqref{REG:4} also holds in the case $K = L = 0$ provided that $\alpha\neq 0$, $\beta\neq 0$ and the potentials $F$ and $G$ satisfy the compatibility condition
    \begin{align*}
        F(\alpha s) = \alpha\beta G(s) \quad\text{for all~} s\in\R.
    \end{align*}
    In that case, we have $\scp{F^\prime(\phi)}{G^\prime(\psi)}\in\mathcal{H}^1_{0,\beta} = \mathcal{D}_\beta$, which allows us to use Proposition~\ref{CR} similarly as in Section~\ref{SUBSEC:REG}.
\end{remark}

\begin{proof}[Proof of Theorem~\ref{THEOREM:EOWS}]
    The existence of a weak solution in the sense of Definition~\ref{DEF:WS} is established in 
    \begin{itemize}[leftmargin=2em, topsep=0em, partopsep=0em, parsep=0em, itemsep=0em]
        \item Theorem~\ref{thm:existence} if $(K,L)\in(0,\infty)\times (0,\infty)$,
        \item Theorem~\ref{THEOREM:K->0} if $(K,L)\in\{0\}\times(0,\infty)$,
        \item Theorem~\ref{THEOREM:K->inf} if $(K,L)\in\{\infty\}\times(0,\infty)$,
        \item Theorem~\ref{THEOREM:L->0} if $(K,L)\in [0,\infty] \times \{0\}$,
        \item Theorem~\ref{THEOREM:L->inf} if $(K,L)\in [0,\infty] \times \{\infty\}$.
    \end{itemize}
    Moreover, in the case $K\in (0,\infty]$, the additional regularity \eqref{REG:1} follows from the corresponding aforementioned theorems. All remaining results are shown in Theorem~\ref{THEOREM:REG}.    
\end{proof}

\begin{remark}
    We point out that Theorem~\ref{THEOREM:K->0}, Theorem~\ref{THEOREM:K->inf}, Theorem~\ref{THEOREM:L->0} and Theorem~\ref{THEOREM:L->inf} are not only useful to prove Theorem~\ref{THEOREM:EOWS}, but also provide further valuable insights about the asymptotic limits $K\to 0$, $K\to\infty$, $L\to 0$ and $L\to \infty$, respectively, on the level of weak solutions.
\end{remark}

\medskip

Our second main result shows continuous dependence of weak solutions on the velocity fields and the initial data in the case of constant mobilities. As a direct consequence, this entails uniqueness of the weak solution.

\begin{theorem}\textnormal{(Continuous dependence and uniqueness)}\label{THEOREM:CD}
    Suppose that the assumptions \ref{ASSUMP:1}--\ref{ASSUMP:POTENTIALS:1} hold, and that the mobility functions $m_\Om$ and $m_\Ga$ are constant. If $d=3$, we additionally assume that \ref{ASSUMP:POTENTIALS:1} holds with $p<6$. Let $K,L\in [0,\infty]$, let $\scp{\phi_0^1}{\psi_0^1}$ and $\scp{\phi_0^2}{\psi_0^2}\in\mathcal{H}^1_{K,\alpha}$ be two pairs of initial data, which satisfy
    \begin{equation}\label{initial-data-mean-value}
        \scp{\phi_0^1-\phi_0^2}{\psi_0^1-\psi_0^2} \in \mathcal{V}^{-1}_{L,\beta}
    \end{equation}
    and let $\boldsymbol{v}_1, \boldsymbol{v}_2\in L^2(0,T;\mathbf{L}_\Div^{3}(\Om))$ and $\boldsymbol{w}_1, \boldsymbol{w}_2\in L^2(0,T;\mathbf{L}^{3}(\Ga))$ be given velocity fields.
    Suppose that $(\phi_1,\psi_1,\mu_1,\theta_1)$ and $(\phi_2,\psi_2,\mu_2,\theta_2)$ are weak solutions in the sense of Definition~\ref{DEF:WS} corresponding to $(\phi_0^1,\psi_0^1,\boldsymbol{v}_1,\boldsymbol{w}_1)$ and $(\phi_0^2,\psi_0^2,\boldsymbol{v}_2,\boldsymbol{w}_2)$, respectively.
    Then, the continuous dependence estimate
    \begin{align}\label{EST:continuous-dependence}
        \begin{split}
            &\bignorm{\bigscp{\phi_1(t)-\phi_2(t)}{\psi_1(t)-\psi_2(t)}}^2_{L,\beta,\ast} 
            \\[0.4em]
            &\leq \bignorm{\bigscp{\phi_0^1-\phi_0^2}{\psi_0^1-\psi_0^2}}^2_{L,\beta,\ast}\exp\left(C\int_0^t\mathcal{F}(\tau)\dtau\right) 
            \\
            &\quad + \int_0^t\bignorm{\bigscp{\boldsymbol{v}_1(s)-\boldsymbol{v}_2(s)}{\boldsymbol{w}_1(s)- \boldsymbol{w}_2(s)}}^2_{\mathcal{L}^{3}}\exp\left(C\int_s^t\mathcal{F}(\tau)\dtau\right)\ds
        \end{split}
    \end{align}
    holds for almost all $t\in[0,T]$, where $\mathcal{F} \coloneqq \norm{\scp{\boldsymbol{v}_1}{\boldsymbol{w}_1}}_{\mathcal{L}^3}^2$
    and the constant $C>0$ depends only on $\Om$, the parameters of the system and the initial data. 

    In particular, if $(\phi_0^1,\psi_0^1) = (\phi_0^2,\psi_0^2)$ a.e.~in $\Omega\times \Gamma$, $\boldsymbol{v}_1 = \boldsymbol{v}_2$ a.e.~in $Q$ and $\boldsymbol{w}_1 = \boldsymbol{w}_2$ a.e.~on $\Sigma$, estimate \eqref{EST:continuous-dependence} ensures uniqueness of the corresponding weak solution.
\end{theorem}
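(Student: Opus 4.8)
The plan is to derive a differential inequality for the difference of the two solutions measured in the norm $\norm{\cdot}_{L,\beta,\ast}$, and then to conclude by Gronwall's lemma. Set $\phi = \phi_1-\phi_2$, $\psi=\psi_1-\psi_2$, $\mu=\mu_1-\mu_2$, $\theta=\theta_1-\theta_2$, $\boldsymbol{v}=\boldsymbol{v}_1-\boldsymbol{v}_2$ and $\boldsymbol{w}=\boldsymbol{w}_1-\boldsymbol{w}_2$, and subtract the weak formulations \eqref{WF:PP}, \eqref{WF:MT} of the two solutions, writing the convective terms as $\phi_1\boldsymbol{v}_1-\phi_2\boldsymbol{v}_2 = \phi\,\boldsymbol{v}_1 + \phi_2\,\boldsymbol{v}$ and $\psi_1\boldsymbol{w}_1-\psi_2\boldsymbol{w}_2 = \psi\,\boldsymbol{w}_1 + \psi_2\,\boldsymbol{w}$. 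By the mass conservation law \eqref{MCL} for both solutions together with assumption \eqref{initial-data-mean-value}, the pair $\scp{\phi(t)}{\psi(t)}$ lies in $\mathcal{V}^{-1}_{L,\beta}$ for every $t\in[0,T]$, so that $\mathcal{S}_{L,\beta}(\phi,\psi)$ is well-defined (see \ref{PRELIM:bulk-surface-elliptic}); since $\scp{\phi}{\psi}\in H^1(0,T;(\mathcal{H}^1_{L,\beta})')$, one has $\mathcal{S}_{L,\beta}(\phi,\psi)\in H^1(0,T;\mathcal{V}^1_{L,\beta})$ together with the standard identity $\ang{\partial_t\scp{\phi}{\psi}}{\mathcal{S}_{L,\beta}(\phi,\psi)}_{\mathcal{H}^1_{L,\beta}} = -\tfrac12\ddt\bignorm{\scp{\phi}{\psi}}^2_{L,\beta,\ast}$.

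The core step is to test the difference of \eqref{WF:PP} with $\scp{\zeta}{\xi} = -\mathcal{S}_{L,\beta}(\phi,\psi)\in\mathcal{H}^1_{L,\beta}$, and the difference of \eqref{WF:MT} with $\scp{\eta}{\vartheta} = \scp{\phi}{\psi}\in\mathcal{H}^1_{K,\alpha}$. In the first identity the time-derivative term produces $\tfrac12\ddt\bignorm{\scp{\phi}{\psi}}^2_{L,\beta,\ast}$; because the mobilities are constant (this is exactly where that hypothesis enters), the remaining bulk and surface terms on the right-hand side of \eqref{WF:PP} collapse — using the weak formulation of the elliptic problem defining $\mathcal{S}_{L,\beta}$ tested with $\scp{\mu}{\theta}\in\mathcal{H}^1_{L,\beta}$ — into $-\intO\mu\phi\dx-\intG\theta\psi\dG$ (if $m_\Om\neq m_\Ga$, it is cleanest to run the whole argument with the solution operator of the mobility-weighted bilinear form, whose induced norm is equivalent to $\norm{\cdot}_{L,\beta,\ast}$). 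The second identity reads $\intO\mu\phi\dx+\intG\theta\psi\dG = \bignorm{\scp{\Grad\phi}{\Gradg\psi}}^2_{\mathcal{L}^2} + \h(K)\intG\abs{\alpha\psi-\phi}^2\dG + \intO\bigl(F'(\phi_1)-F'(\phi_2)\bigr)\phi\dx + \intG\bigl(G'(\psi_1)-G'(\psi_2)\bigr)\psi\dG$. Adding the two, the terms $\intO\mu\phi\dx+\intG\theta\psi\dG$ cancel and one obtains an identity whose left-hand side controls $\tfrac12\ddt\bignorm{\scp{\phi}{\psi}}^2_{L,\beta,\ast} + \bignorm{\scp{\Grad\phi}{\Gradg\psi}}^2_{\mathcal{L}^2} + \h(K)\intG\abs{\alpha\psi-\phi}^2\dG$, while the right-hand side consists of the two potential-difference terms above and the four convective terms $-\intO\phi\,\boldsymbol{v}_1\cdot\Grad\mathcal{S}_{L,\beta}^\Om(\phi,\psi)\dx$, $-\intO\phi_2\,\boldsymbol{v}\cdot\Grad\mathcal{S}_{L,\beta}^\Om(\phi,\psi)\dx$ and their surface analogues.

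It remains to estimate the right-hand side so that the gradient terms are absorbed on the left. For the convective terms I would use Hölder's inequality together with the Sobolev embeddings $H^1(\Om)\emb\mathbf{L}^6(\Om)$ and $H^1(\Ga)\emb\mathbf{L}^6(\Ga)$ (available since $d\le 3$), the bound \eqref{EST:solution-operator} for $\mathcal{S}_{L,\beta}$, and Lemma~\ref{LEMMA:DUALITY}, to estimate the terms containing $\boldsymbol{v}_1,\boldsymbol{w}_1$ by $\tfrac14\bignorm{\scp{\Grad\phi}{\Gradg\psi}}^2_{\mathcal{L}^2} + C\norm{\scp{\boldsymbol{v}_1}{\boldsymbol{w}_1}}^2_{\mathcal{L}^3}\bignorm{\scp{\phi}{\psi}}^2_{L,\beta,\ast}$, and the terms containing $\boldsymbol{v},\boldsymbol{w}$ — using in addition the uniform bound on $\scp{\phi_2}{\psi_2}$ in $L^\infty(0,T;\mathcal{H}^1)$ provided by \ref{DEF:WS:REG} — by $\tfrac14\norm{\scp{\boldsymbol{v}}{\boldsymbol{w}}}^2_{\mathcal{L}^3} + C\bignorm{\scp{\phi}{\psi}}^2_{L,\beta,\ast}$. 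For the potential-difference terms I would invoke the growth conditions on the potentials and their derivatives, Hölder's inequality, Sobolev embeddings and Gagliardo–Nirenberg interpolation, and again Lemma~\ref{LEMMA:DUALITY} to trade the resulting $\mathcal{L}^2$-norms for $\norm{\cdot}_{L,\beta,\ast}$-norms; the strict growth exponent $p<6$ in the case $d=3$ is exactly what makes the gradient part absorbable by $\bignorm{\scp{\Grad\phi}{\Gradg\psi}}^2_{\mathcal{L}^2}$. After absorption one arrives at $\ddt\bignorm{\scp{\phi}{\psi}}^2_{L,\beta,\ast} \le C\mathcal{F}(t)\bignorm{\scp{\phi}{\psi}}^2_{L,\beta,\ast} + C\norm{\scp{\boldsymbol{v}(t)}{\boldsymbol{w}(t)}}^2_{\mathcal{L}^3}$ with $\mathcal{F} = \norm{\scp{\boldsymbol{v}_1}{\boldsymbol{w}_1}}^2_{\mathcal{L}^3}$, and the integral form of Gronwall's lemma yields \eqref{EST:continuous-dependence}; the uniqueness assertion is the special case of coinciding data.

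I expect the control of the potential-difference terms to be the main obstacle: bounding $\intO(F'(\phi_1)-F'(\phi_2))\phi\dx$ and $\intG(G'(\psi_1)-G'(\psi_2))\psi\dG$ solely in terms of the dissipation $\bignorm{\scp{\Grad\phi}{\Gradg\psi}}^2_{\mathcal{L}^2}$ (to be absorbed) and $\bignorm{\scp{\phi}{\psi}}^2_{L,\beta,\ast}$ is what forces the subcriticality restriction $p<6$ when $d=3$, and it requires a careful combination of Lemma~\ref{LEMMA:DUALITY} with sharp embedding and interpolation inequalities. A secondary technical difficulty is the bookkeeping of the norms $\norm{\cdot}_{L,\beta,\ast}$, $\norm{\cdot}_{(\mathcal{H}^1_{L,\beta})'}$ and $\norm{\cdot}_{(\mathcal{H}^1_{K,\alpha})'}$, which coincide only up to equivalence and only on the relevant constrained subspaces, uniformly across all regimes $K,L\in[0,\infty]$, together with the treatment of possibly unequal constant mobilities via the mobility-weighted solution operator.
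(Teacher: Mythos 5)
Your proposal is correct and follows essentially the same route as the paper's proof: test the difference of \eqref{WF:PP} with $\pm\mathcal{S}_{L,\beta}(\phi,\psi)$, cancel the chemical-potential pairing against the difference of \eqref{WF:MT} tested with $\scp{\phi}{\psi}$, absorb the convective and potential-difference terms into $\norm{\scp{\phi}{\psi}}_{K,\alpha}^2$ using $p<6$, and conclude with Gronwall. The only cosmetic differences are that the paper handles the nonlinear terms via Ehrling's lemma for the compact embedding $\mathcal{H}^1\emb\mathcal{L}^{12/(8-p)}$ rather than Gagliardo--Nirenberg plus Lemma~\ref{LEMMA:DUALITY}, and it simply normalizes $m_\Om\equiv m_\Ga\equiv 1$ where you propose a mobility-weighted solution operator.
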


The proof of Theorem~\ref{THEOREM:CD} will be presented in Subsection~\ref{SUBSEC:CD}.

\section{Existence of weak solutions in the case \texorpdfstring{$K,L\in(0,\infty)$}{K,L in (0,infty)}}
\label{SECT:EXISTENCE}
\begin{theorem}\label{thm:existence}
    Suppose that the assumptions \ref{ASSUMP:1}--\ref{ASSUMP:POTENTIALS:1} hold. Let $\scp{\phi_0}{\psi_0}\in\mathcal{H}^1$ be arbitrary initial data, $\boldsymbol{v}\in L^2(0,T;\mathbf{L}_\Div^3(\Om))$, $\boldsymbol{w}\in L^2(0,T;\mathbf{L}^3_\tau(\Ga))$ and $K,L\in(0,\infty)$. Then there exists a weak solution of the system \eqref{EQ:SYSTEM} in the sense of Definition~\ref{DEF:WS} which further satisfies $\scp{\mu}{\theta}\in L^4(0,T;\mathcal{L}^2)$.
\end{theorem}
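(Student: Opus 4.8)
The plan is to construct a weak solution in the case $K,L\in(0,\infty)$ by a Faedo--Galerkin approximation, using suitable eigenfunction bases for the bulk-surface function spaces, followed by energy estimates, compactness arguments, and a limit passage.

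\textbf{Step 1: Galerkin setup.} Since $K,L\in(0,\infty)$, the relevant test function spaces are $\mathcal{H}^1_{L,\beta} = \mathcal{H}^1$ and $\mathcal{H}^1_{K,\alpha} = \mathcal{H}^1$, so both the phase-field pair and the chemical potential pair live in the same space $\mathcal{H}^1 = H^1(\Om)\times H^1(\Ga)$. I would choose a basis $\{(\varphi_i,\phi_i)\}_{i\in\N}$ of $\mathcal{H}^1$ that is orthonormal in $\mathcal{L}^2$ and consists of eigenfunctions of a bulk-surface elliptic operator — e.g. the eigenfunctions of the operator associated to the bilinear form $(\,\cdot\,,\,\cdot\,)_{K,\alpha} + \ang{\cdot}{\cdot}_{\mathcal{H}^1_{K,\alpha}}$, which are dense in $\mathcal{H}^1$ and orthogonal there as well. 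Let $\mathcal{W}_m$ be the span of the first $m$ basis vectors and seek $(\phi_m,\psi_m),(\mu_m,\theta_m)\in\mathcal{W}_m$ solving the Galerkin version of the weak formulation \eqref{WF}, i.e. \eqref{WF:PP} tested against $\mathcal{W}_m$ with the time-derivative term an honest derivative of the $\mathcal{L}^2$-projection, and \eqref{WF:MT} tested against $\mathcal{W}_m$. This yields a system of ODEs coupled with algebraic relations; substituting the algebraic relation for $(\mu_m,\theta_m)$ (which is solvable since the mass matrix is the identity and $F',G'$ are continuous) into the evolution equations gives an ODE system with continuous right-hand side, so Peano's theorem provides a local-in-time solution.

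\textbf{Step 2: A priori estimates.} Testing the Galerkin equations by $(\mu_m,\theta_m)$ and $(\delt\phi_m,\delt\psi_m)$ respectively and adding, the structure mirrors the formal energy identity \eqref{INTRO:ENERGY:ID}: one obtains $\ddt E_K(\phi_m,\psi_m) + \|(\sqrt{m_\Om(\phi_m)}\Grad\mu_m, \sqrt{m_\Ga(\psi_m)}\Gradg\theta_m)\|_{\mathcal{L}^2}^2 + \h(L)\|\beta\theta_m-\mu_m\|_{L^2(\Ga)}^2 = \intO\phi_m\boldsymbol{v}\cdot\Grad\mu_m\dx + \intG\psi_m\boldsymbol{w}\cdot\Gradg\theta_m\dG$. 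The convective terms are the only non-sign-definite ones; I estimate them with Hölder ($\mathbf{L}^3$ on $\boldsymbol{v}$, $L^6$ on $\phi_m$ via Sobolev embedding in $d\le 3$, $L^2$ on $\Grad\mu_m$), absorb the gradient into the dissipation via Young, and control $\|\phi_m\|_{L^6}$ by the energy (using \ref{ASSUMP:POTENTIALS:2}? — no, here $K\in(0,\infty)$, so we need the bulk-surface Poincaré inequality \ref{PRELIM:POINCINEQ} to bound $\|(\phi_m,\psi_m)\|_{\mathcal{L}^2}$ by $\|(\phi_m,\psi_m)\|_{K,\alpha}$ once the mean is controlled via mass conservation, which itself follows from testing \eqref{WF:PP} with constants). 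A Grönwall argument then gives, uniformly in $m$: $(\phi_m,\psi_m)$ bounded in $L^\infty(0,T;\mathcal{H}^1_{K,\alpha})$, $(\mu_m,\theta_m)$ with gradients bounded in $L^2(0,T;\mathcal{L}^2)$, and (using Poincaré on the mean-free part plus a separate bound on the generalized mean of $(\mu_m,\theta_m)$ obtained by testing \eqref{WF:MT} with constants, where $F'(\phi_m),G'(\psi_m)$ are controlled in $L^1$ or $L^{4/3}$ via the growth conditions \eqref{GC:F'}, \eqref{GC:G'} and the $\mathcal{H}^1$-bound) $(\mu_m,\theta_m)$ bounded in $L^2(0,T;\mathcal{H}^1)$. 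Testing \eqref{WF:PP} with arbitrary elements then bounds $(\delt\phi_m,\delt\psi_m)$ in $L^2(0,T;(\mathcal{H}^1_{L,\beta})')$. For the extra regularity $(\mu,\theta)\in L^4(0,T;\mathcal{L}^2)$, I would interpolate: $\|(\mu_m,\theta_m)\|_{\mathcal{L}^2} \lesssim \|(\mu_m,\theta_m)\|_{\mathcal{H}^1}^{1/2}\|(\mu_m,\theta_m)\|_{(\mathcal{H}^1)'}^{1/2}$ combined with an $L^\infty$-in-time bound on $\|(\mu_m,\theta_m)\|_{(\mathcal{H}^1_{K,\alpha})'}$ — the latter coming from \eqref{WF:MT} since the right-hand side is bounded in $L^\infty(0,T;(\mathcal{H}^1_{K,\alpha})')$ by the $\mathcal{H}^1$-bound on $(\phi_m,\psi_m)$ and the growth of $F',G'$; the $L^2$-in-time $\mathcal{H}^1$-bound then upgrades to $L^4$-in-time $\mathcal{L}^2$.

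\textbf{Step 3: Compactness and limit passage.} From the uniform bounds, extract (not relabeled) a subsequence with $(\phi_m,\psi_m)\wsto$ in $L^\infty(0,T;\mathcal{H}^1_{K,\alpha})$, $\wto$ in $H^1(0,T;(\mathcal{H}^1_{L,\beta})')$, $(\mu_m,\theta_m)\wto$ in $L^2(0,T;\mathcal{H}^1)$, and by Aubin--Lions--Simon, $(\phi_m,\psi_m)\to(\phi,\psi)$ strongly in $C([0,T];\mathcal{L}^2)$ and in $L^2(0,T;\mathcal{L}^r)$ for suitable $r<6$ (and a.e.), which by continuity and the growth conditions \eqref{GC:F'}, \eqref{GC:G'} gives $F'(\phi_m)\to F'(\phi)$, $G'(\psi_m)\to G'(\psi)$ in, say, $L^{2}(0,T;L^{2})$ or at least $L^{4/3}$ on $Q$ resp. $\Sigma$ via Vitali/generalized dominated convergence. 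The mobility terms $m_\Om(\phi_m)\Grad\mu_m \wto m_\Om(\phi)\Grad\mu$ weakly in $L^2$ because $m_\Om(\phi_m)\to m_\Om(\phi)$ strongly (boundedness + continuity + a.e. convergence, dominated convergence) times a weakly convergent gradient; likewise the convective terms $\phi_m\boldsymbol{v}\to\phi\boldsymbol{v}$ strongly in $L^2(Q)$. Passing to the limit in the Galerkin identities (for test functions from a fixed $\mathcal{W}_{m_0}$, then by density in all of $\mathcal{H}^1$) yields the weak formulation \eqref{WF}. The initial condition follows from $(\phi_m,\psi_m)\to(\phi,\psi)$ in $C([0,T];\mathcal{L}^2)$ together with $(\phi_m(0),\psi_m(0))\to(\phi_0,\psi_0)$ in $\mathcal{L}^2$ (projection of the initial data onto $\mathcal{W}_m$). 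The mass conservation law \eqref{MCL} passes to the limit directly. Finally, the energy inequality \eqref{WEDL} is obtained by integrating the Galerkin energy identity in time and using weak/weak-$*$ lower semicontinuity of the convex energy terms and the dissipation (norms are weakly lower semicontinuous), plus strong convergence handling the convective and potential terms — note we only get an inequality because of the lower semicontinuity.

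\textbf{Main obstacle.} The most delicate point is the treatment of the convective terms in the a priori estimates and in closing the energy estimate in this $K,L\in(0,\infty)$ regime: one must control $\|(\phi_m,\psi_m)\|_{\mathcal{L}^6\times\mathcal{L}^4}$ (needed in the Hölder bound on $\intO\phi_m\boldsymbol{v}\cdot\Grad\mu_m$) uniformly in $m$ by the energy, which requires combining the gradient part of $E_K$ with the bulk-surface Poincaré inequality \ref{PRELIM:POINCINEQ} applied to the mean-free shifts of $(\phi_m,\psi_m)$ and a separate, time-uniform bound on the generalized means coming from the mass conservation law — and then absorbing $\|\Grad\mu_m\|_{L^2}$ into the dissipation via Young's inequality with a small parameter, so that Grönwall's lemma applies with an $L^1_t$ coefficient $\|(\boldsymbol{v},\boldsymbol{w})\|_{\mathcal{L}^3}^2$. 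A second, more technical obstacle is establishing the $L^4(0,T;\mathcal{L}^2)$-regularity of $(\mu,\theta)$, which hinges on bounding the generalized mean $\meano{\mu_m}$ (equivalently $\|(\mu_m,\theta_m)\|_{(\mathcal{H}^1_{K,\alpha})'}$) in $L^\infty(0,T)$ from \eqref{WF:MT}, using the $L^\infty(0,T;\mathcal{H}^1)$-bound on $(\phi_m,\psi_m)$ and the growth conditions on $F',G'$; this is clean in $d=2$ but in $d=3$ needs the Sobolev embedding $H^1\hookrightarrow L^6$ to absorb $\|F'(\phi_m)\|$ (which is why no further restriction on $p$ beyond $\eqref{GC:F'}$ is needed here, as $F'(\phi_m)\in L^{p/(p-1)}\subset L^{6/5}$ for $p\le 6$).
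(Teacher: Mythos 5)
Your proposal follows essentially the same route as the paper: a Faedo--Galerkin scheme, the discrete energy identity with the convective terms absorbed via H\"older/Young and the bulk-surface Poincar\'e inequality (justified through the discrete mass conservation law), Gr\"onwall, the duality/interpolation estimate of Lemma~\ref{LEMMA:DUALITY} to upgrade $(\mu_m,\theta_m)$ to $L^4(0,T;\mathcal{L}^2)$, and then Aubin--Lions--Simon compactness with lower semicontinuity for the energy inequality. The only cosmetic difference is your choice of a coupled bulk-surface eigenbasis, whereas the paper uses the decoupled Neumann-Laplacian and Laplace--Beltrami eigenfunctions; both work, and you correctly identify the two genuinely delicate points (the convective estimate and the $L^\infty$-in-time dual bound on the chemical potentials).
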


\begin{proof}
    \textbf{Step 1: Discretization via a Faedo--Galerkin scheme.} 
    It is well known that the problems
    \begin{align*}
        \begin{cases}
            \begin{alignedat}{3}
                -\Lap \zeta &= \lambda_\Om \zeta, \quad&&\text{in }\Om, \\
                \deln\zeta &= 0, \quad&&\text{on }\Ga,
            \end{alignedat}
        \end{cases}
        \quad -\Lapg \xi = \lambda_\Ga \xi \quad\text{on }\Ga,
    \end{align*}
    have countable many eigenvalues, which can be written as increasing sequences $\{\lambda_\Om^j\}_{j\in\N}$ and $\{\lambda_\Ga^j\}_{j\in\N}$, respectively. 
    The associated eigenfunctions $\{\zeta_j\}_{j\in\N}\subset H^1(\Om)$ and $\{\xi_j\}_{j\in\N}\subset H^1(\Ga)$ form an orthonormal Schauder basis of $L^2(\Om)$ and $L^2(\Ga)$, respectively. In particular, we fix the eigenfunctions associated to the first eigenvalues $\lambda_\Om^1 = \lambda_\Ga^1 = 0$ as $\zeta_1 \equiv \abs{\Omega}^{-1/2}$ and $\xi_1 \equiv \abs{\Gamma}^{-1/2}$. Moreover, the eigenfunctions $\{\zeta_j\}_{j\in\N}$ and $\{\xi_j\}_{j\in\N}$ also form an orthogonal Schauder basis of $H^1(\Omega)$ and $H^1(\Gamma)$, respectively. 
    
    For any $m\in\N$, we introduce the finite-dimensional subspaces
    \begin{align*}
        \mathcal{A}_m = \textnormal{span}\{\zeta_1,\ldots,\zeta_m\}\subset H^1(\Om), \quad
        \mathcal{B}_m = \textnormal{span}\{\xi_1,\ldots,\xi_m\}\subset H^1(\Ga),
    \end{align*}
    along with the orthogonal $L^2(\Om)$-projection $\projam$, and the orthogonal $L^2(\Ga)$-projection $\projbm$. In particular, there exist constants $C_{\Omega}, C_{\Gamma}>0$ depending only on $\Omega$ and $\Gamma$, respectively, such that for all $\zeta\in H^1(\Omega)$ and $\xi\in H^1(\Gamma)$,
    \begin{align*}
        \norm{\projam \zeta}_{H^1(\Omega)} \le C_{\Omega} \norm{\zeta}_{H^1(\Omega)}
        \quad\text{and}\quad
        \norm{\projbm \xi}_{H^1(\Gamma)} \le C_{\Gamma} \norm{\xi}_{H^1(\Gamma)}.
    \end{align*}
    For any $m\in\N$, and $t\in[0,T]$, we make the ansatz
    \begin{subequations}\label{galerkin-ansatz}
        \begin{alignat*}{2}
            \phi_m(t)&\coloneqq \sum_{j=1}^m a_j^m(t)\,\zeta_j \quad\text{a.e.~in } \Om, &\qquad
            \psi_m(t)&\coloneqq \sum_{j=1}^m b_j^m(t)\,\xi_j \quad\text{a.e.~on } \Ga, \\
            \mu_m(t)&\coloneqq \sum_{j=1}^m c_j^m(t)\,\zeta_j \quad\text{a.e.~in } \Om, &\qquad
            \theta_m(t)&\coloneqq \sum_{j=1}^m d_j^m(t)\,\xi_j \quad\text{a.e.~on } \Ga.
        \end{alignat*}
        \end{subequations}
    Here, the scalar, time-dependent coefficients $a_j^m, b_j^m, c_j^m, d_j^m$, $j=1,\ldots,m$ are assumed to be continuously differentiable functions that are not yet determined. They need to be designed in a way such that the discretized weak formulation%
    \begin{subequations}\label{DWF}
        \begin{align}
            &\ang{\scp{\delt\phi_m}{\delt\psi_m}}{\scp{\zeta}{\xi}}_{\mathcal{H}^1} - \intO \phi_m\boldsymbol{v}\cdot\Grad\zeta\dx - \intG \psi_m\boldsymbol{w}\cdot\Gradg\xi\dG \nonumber \\
            &\quad= - \intO m_\Om(\phi_m)\Grad\mu_m\cdot\Grad\zeta\dx - \intG  m_\Ga(\psi_m)\Gradg\theta_m\cdot\Gradg\xi\dG \label{DWF:PP}\\
            &\qquad - L^{-1}\intG(\beta\theta_m-\mu_m)(\beta\xi - \zeta)\dG, \nonumber\\[1ex]
            &\intO \mu_m\,\zeta\dx + \intG\theta_m\,\xi\dG 
            \nonumber\\
            &\quad =  \intO\Grad\phi_m\cdot\Grad\zeta + F'(\phi_m)\zeta \dx + \intG\Gradg\psi_m\cdot\Gradg\xi + G'(\psi_m)\xi \dG \label{DWF:MT}\\
            &\qquad + K^{-1}\intG(\alpha\psi_m-\phi_m)(\alpha\xi - \zeta) \dG, \nonumber
        \end{align}
    \end{subequations}
    holds on $[0,T]$ for all $\scp{\zeta}{\xi}\in\mathcal{A}_m\times\mathcal{B}_m$, supplemented with the initial conditions
    \begin{align}\label{DWF:IC}
        \phi_m(0) =  \projam(\phi_0) \quad\text{a.e.~in }\Omega, \qquad \psi_m(0) =  \projbm(\psi_0) \quad\text{a.e.~on } \Ga.
    \end{align}
    Now, consider the corresponding coefficient vectors $\mathbf{a}^m \coloneqq (a^m_1,\ldots,a^m_m), \mathbf{b}^m \coloneqq (b^m_1,\ldots,b^m_m)$, $\mathbf{c}^m \coloneqq (c^m_1,\ldots,c^m_m)$ and $\mathbf{d}^m \coloneqq (d^m_1,\ldots,d^m_m)$. Testing \eqref{DWF:PP} with $\scp{\zeta_1}{\xi_1},\ldots,\scp{\zeta_m}{\xi_m}$, we conclude that $(\mathbf{a}^m,\mathbf{b}^m)^\top$ is determined by a system of $2m$ ordinary differential equations, whose right-hand side depends continuously on $\mathbf{a}^m, \mathbf{b}^m, \mathbf{c}^m$ and $\mathbf{d}^m$. In view of \eqref{DWF:IC}, this system is subject to the initial conditions
    \begin{alignat*}{3}
        [\mathbf{a}^m]_j(0) &= a^m_j(0) = \scp{\phi_0}{\zeta_j}, \quad\quad&&\text{for all } j\in\{1,\ldots,m\}, \\
        [\mathbf{b}^m]_j(0) &= b^m_j(0) = \scp{\psi_0}{\xi_j}, &&\text{for all } j\in\{1,\ldots,m\}.
    \end{alignat*}
    Moreover, testing \eqref{DWF:MT} with $(\zeta_1,\xi_1),\ldots,(\zeta_m,\xi_m)$, we infer that $(\mathbf{c}^m,\mathbf{d}^m)^\top$ is explicitly given by a system of $2m$ algebraic equations, whose right-hand side depends continuously on $\mathbf{a}^m$ and $\mathbf{b}^m$. Thus, replacing $(\mathbf{c}^m,\mathbf{d}^m)^\top$ in the right-hand side of the aforementioned ODE system by this algebraic description, we obtain a closed $2m$-dimensional ODE system for $(\mathbf{a}^m,\mathbf{b}^m)^\top$, whose right-hand side depends continuously on $(\mathbf{a}^m,\mathbf{b}^m)^\top$. We can thus apply the Cauchy--Peano theorem to obtain a local solution $(\mathbf{a}^m,\mathbf{b}^m)^\top:[0,T_m^\ast)\cap[0,T]\rightarrow\R^{2m}$ with $T_m^\ast>0$ to the corresponding initial value problem. Without loss of generality, we assume that $T_m^\ast \leq T$ and that $(\mathbf{a}^m,\mathbf{b}^m)^\top$ is non-extendable, i.e., $T_m^\ast$ is chosen as large as possible. Now, we can reconstruct $(\mathbf{c}^m,\mathbf{d}^m)^\top:[0,T_m^\ast)\rightarrow\R^{2m}$ by the aforementioned $2m$-dimensional system of algebraic equations. In view of \eqref{galerkin-ansatz}, we thus have shown the existence of functions
    \begin{align*}
        \scp{\phi_m}{\psi_m}\in C^1([0,T_m^\ast);\mathcal{H}^1),\quad\quad \scp{\mu_m}{\theta_m}\in C^1([0,T_m^\ast);\mathcal{H}^1)
    \end{align*}
    solving \eqref{DWF} on the time interval $[0,T_m^\ast)$ subject to the initial conditions \eqref{DWF:IC}. \\[1ex]
    \textbf{Step 2: Uniform estimates.} We establish suitable estimates for each approximate solution $(\phi_m,\psi_m,\mu_m,\theta_m)$, which are uniform with respect to the index $m$. In particular, let $T_m < T_m^\ast$ be arbitrary. In the following, let $C$ denote a generic non-negative constant depending only on the initial data and the constants introduced in \ref{ASSUMP:MOBILITY}--\ref{ASSUMP:POTENTIALS:1} including the final time $T$, but not on $m$ or $T_m$.
    
    Testing \eqref{DWF:PP} with $\scp{\mu_m}{\theta_m}$, \eqref{DWF:MT} with $-\scp{\delt\phi_m}{\delt\psi_m}$, adding the resulting equations, and integrating with respect to time from $0$ to $t$, we derive the discrete energy identity
    \begin{align}\label{EQ:DEI}
        \begin{split}
            &E_K(\phi_m(t),\psi_m(t)) + \int_0^t\intO m_\Om(\phi_m)\abs{\Grad\mu_m}^2\dxs + \int_0^t\intG m_\Ga(\psi_m)\abs{\Gradg\theta_m}^2\dGs \\
            &\quad\quad + L^{-1} \int_0^t\intG (\beta\theta_m-\mu_m)^2\dGs \\
            &\quad = E_K(\phi_m(0),\psi_m(0)) + \int_0^t\intO \phi_m\boldsymbol{v}\cdot\Grad\mu_m\dxs + \int_0^t\intG\psi_m\boldsymbol{w}\cdot\Gradg\theta_m\dGs \\
        \end{split}
    \end{align}
    for all $t\in[0,T_m]$. Now, due to the growth conditions on $F, G$ (see \ref{ASSUMP:POTENTIALS:1}), the Sobolev embeddings $H^1(\Om)\emb L^6(\Om)$ and $H^1(\Ga)\emb L^q(\Ga)$, the trace embedding $H^1(\Om)\emb L^2(\Ga)$, and the properties of the projections $\mathbb{P}_{\mathcal{A}_m}$ and $\mathbb{P}_{\mathcal{B}_m}$, we use the initial conditions to infer
    \begin{align}\label{EST:initial-energy}
        \begin{split}
            &E_K(\phi_m(0),\psi_m(0)) \\
            &\leq \frac{1}{2} \norm{\Grad\projam(\phi_0)}^2_{L^2(\Om)} + \frac{1}{2}\norm{\Gradg\projbm(\psi_0)}^2_{L^2(\Ga)} + c_F\left(\abs{\Om} +  \norm{\projam(\phi_0)}^p_{L^p(\Om)}\right) \\ 
            &\quad + c_G\left(\abs{\Ga} +  \norm{\projbm(\psi_0)}^q_{L^q(\Ga)}\right) + \frac{1}{2K}\Big(\abs{\alpha} \norm{\projbm(\psi_0)}_{L^2(\Ga)} + \norm{\projam(\phi_0)}_{L^2(\Ga)}\Big)^2 \\
            &\leq C\norm{\scp{\phi_0}{\psi_0}}_{\mathcal{H}^1}^2 +  C\left(1 +  \norm{\phi_0}^p_{H^1(\Om)}\right) + C\left(1 +  \norm{\psi_0}^q_{H^1(\Ga)}\right).
        \end{split}
    \end{align}    
     To obtain a suitable bound on the energy and the integral terms on the left-hand side of \eqref{EQ:DEI}, we have to deal with the convective terms appearing on the right-hand side. To this end, we start by deriving for all $t\in[0,T_m]$ the estimate
    \begin{align}\label{EST:convective-terms}
            &\int_0^t\intO \phi_m\boldsymbol{v}\cdot\Grad\mu_m \dxs + \int_0^t\intG \psi_m\boldsymbol{w}\cdot\Gradg\mu_m \dGs \notag\\
            &\leq \int_0^t \norm{\phi_m}_{L^6(\Om)}\norm{\boldsymbol{v}}_{L^3(\Om)}\norm{\Grad\mu_m}_{L^2(\Om)} + \norm{\psi_m}_{L^6(\Ga)}\norm{\boldsymbol{w}}_{L^3(\Ga)}\norm{\Gradg\theta_m}_{L^2(\Ga)} \ds 
            \notag\\
            \begin{split}
            &\leq \frac{m_\Om^\ast}{2} \int_0^t \norm{\Grad\mu_m}^2_{L^2(\Om)}\ds + \frac{m_\Ga^\ast}{2} \int_0^t \norm{\Gradg\theta_m}^2_{L^2(\Ga)}\ds \\
            &\quad + C \int_0^t \norm{\boldsymbol{v}}_{L^3(\Om)}^2\norm{\phi_m}_{H^1(\Om)}^2 + \norm{\boldsymbol{w}}_{L^3(\Ga)}^2\norm{\psi_m}_{H^1(\Ga)}^2 \ds 
            \end{split}
            \\
            \begin{split}
            &\leq \frac{m_\Om^\ast}{2} \int_0^t \norm{\Grad\mu_m}^2_{L^2(\Om)}\ds + \frac{m_\Ga^\ast}{2} \int_0^t \norm{\Gradg\theta_m}^2_{L^2(\Ga)}\ds \notag\\
            &\quad + C\int_0^t \left(\norm{\boldsymbol{v}}_{L^3(\Om)}^2 + \norm{\boldsymbol{w}}_{L^3(\Ga)}^2\right) \norm{\scp{\phi_m}{\psi_m}}^2_{\mathcal{H}^1} \ds, \notag
            \end{split}
    \end{align}
    using again the embeddings $H^1(\Om)\emb L^6(\Om)$ and $H^1(\Ga)\emb L^6(\Ga)$. Recalling that $F,G\geq 0$, and that the mobility functions $m_\Om$ and $m_\Ga$ are uniformly positive (see \ref{ASSUMP:MOBILITY}), we infer from \eqref{EST:initial-energy} and \eqref{EST:convective-terms} that
    \begin{align}\label{EST:PPKA}
        \begin{split}
            &\frac12\norm{\scp{\phi_m}{\psi_m}}_{K,\alpha}^2 + \frac{m_\Om^\ast}{2}\int_0^t\intO \abs{\Grad\mu_m}^2\dxs + \frac{m_\Ga^\ast}{2}\int_0^t\intG \abs{\Gradg\theta_m}^2\dGs \\
            &\quad\quad + L^{-1} \int_0^t\intG(\beta\theta_m-\mu_m)^2\dGs \\
            &\quad\leq C + C\int_0^t \left(\norm{\boldsymbol{v}}_{L^3(\Om)}^2 + \norm{\boldsymbol{w}}_{L^3(\Ga)}^2\right) \norm{\scp{\phi_m}{\psi_m}}^2_{\mathcal{H}^1} \ds \\
            &\quad\leq C + C \int_0^t \left(\norm{\boldsymbol{v}}_{L^3(\Om)}^2 + \norm{\boldsymbol{w}}_{L^3(\Ga)}^2\right) \norm{\scp{\phi_m}{\psi_m}}^2_{K,\alpha} \ds
        \end{split}
    \end{align}
    for all $t\in[0,T_m]$. Here, the last inequality follows from the bulk-surface Poincar\'{e} inequality \ref{PRELIM:POINCINEQ}, which usage is justified by the following reasoning. First, testing \eqref{DWF:PP} with $(\beta,1)$ yields the discrete mass conservation law
    \begin{align}\label{EQ:DMCL}
        \beta\intO\phi_m(t)\dx + \intG\psi_m(t)\dG = \beta\intO\phi_0\dx + \intG\psi_0\dG
    \end{align}
    for all $t\in[0,T_m]$, where we used that
    \begin{align*}
        &\beta\abs{\Om}\meano{\projam(\phi_0)} + \abs{\Ga}\meang{\projbm(\psi_0)} = \bigscp{\scp{\projam(\phi_0)}{\projbm(\psi_0)}}{\scp{\beta}{1}}_{\mathcal{L}^2} \\[0.3em]
        &\quad = \bigscp{\scp{\phi_0}{\psi_0}}{\scp{\beta}{1}}_{\mathcal{L}^2} = \beta\abs{\Om}\meano{\phi_0} + \abs{\Ga}\meang{\psi_0}.
    \end{align*}
    Hence, considering
    \begin{align*}
        c \coloneqq \frac{\beta\abs{\Om}\meano{\phi_0} + \abs{\Ga}\meang{\psi_0}}{\beta^2\abs{\Om} + \abs{\Ga}},
    \end{align*}
    and invoking \eqref{EQ:DMCL}, we infer
    \begin{align*}
        \beta\abs{\Om}\meano{\phi_m(t)-\beta c} + \abs{\Ga}\meang{\psi_m(t) - c} = 0 \quad\text{for all } t\in[0,T_m].
    \end{align*}
    This allows us to use the bulk-surface Poincar\'{e} inequality \ref{PRELIM:POINCINEQ}, which yields
    \begin{align}\label{EST:PIPP}
        \begin{split}
            \norm{\scp{\phi_m}{\psi_m}}_{\mathcal{L}^2} &\leq \norm{\scp{\phi_m-\beta c}{\psi_m-c}}_{\mathcal{L}^2} + \norm{\scp{\beta c}{c}}_{\mathcal{L}^2} \\
            &\leq C_P \norm{\scp{\phi_m}{\psi_m}}_{K,\alpha} + C.
            \end{split}
    \end{align}
    Thus, using Gronwall's lemma, we conclude from \eqref{EST:PPKA} that
    \begin{align*}
        \norm{\scp{\phi_m(t)}{\psi_m(t)}}_{K,\alpha}^2 &\leq C \exp\left(C\int_0^t\left(\norm{\boldsymbol{v}}_{L^3(\Om)}^2 + \norm{\boldsymbol{w}}_{L^3(\Ga)}^2\right)\ds\right)\\
        &\leq C\exp\left(C\int_0^T\left(\norm{\boldsymbol{v}}_{L^3(\Om)}^2 + \norm{\boldsymbol{w}}_{L^3(\Ga)}^2\right)\ds\right)
    \end{align*}
    for all $t\in[0,T_m]$. We thus readily deduce that 
    \begin{align}\label{EST:PPFULLH^1}
        \norm{\scp{\phi_m}{\psi_m}}_{L^\infty(0,T_m;\mathcal{H}^1)}^2 + K^{-1}\norm{\alpha\psi_m-\phi_m}^2_{L^\infty(0,T_m;L^2(\Ga))} \leq C.
    \end{align}
    In view of \eqref{EST:PPKA}, this additionally entails
    \begin{align}\label{EST:GRADMT+DIFF}
            \norm{\scp{\Grad\mu_m}{\Gradg\theta_m}}^2_{L^2(0,T_m;\mathcal{L}^2)} + L^{-1} \norm{\beta\theta_m-\mu_m}^2_{L^2(0,T_m;L^2(\Ga))} &\leq C.
    \end{align}
    Next, we derive a uniform estimate for $\scp{\mu_m}{\theta_m}$ in the full $\mathcal{H}^1$ norm. To this end, let $\scp{\zeta^\ast}{\xi^\ast}\in\mathcal{H}^1$ and consider $\scp{\zeta}{\xi}\coloneqq\scp{\projam(\zeta^\ast)}{\projbm(\xi^\ast)}$. By the growth conditions \eqref{GC:F'} and \eqref{GC:G'} from \ref{ASSUMP:POTENTIALS:1}, the Sobolev embeddings $H^1(\Om)\emb L^6(\Om)$ and $H^1(\Ga)\emb L^{2q}(\Ga)$, and the trace theorem $H^1(\Om)\emb L^2(\Ga)$, we have
    \begin{align}\label{COMP:mu-theta-H^1}
        \begin{split}
            \abs{\ang{\scp{\mu_m}{\theta_m}}{\scp{\zeta^\ast}{\xi^\ast}}_{\mathcal{H}^1}} &= \abs{\ang{\scp{\mu_m}{\theta_m}}{\scp{\zeta}{\xi}}_{\mathcal{H}^1}} \\
            &\leq \norm{\Grad\phi_m}_{L^2(\Om)}\norm{\Grad\zeta}_{L^2(\Om)} + \norm{F'(\phi_m)}_{L^{6/5}(\Om)}\norm{\zeta}_{L^6(\Om)} \\
            &\quad + \norm{\Gradg\psi_m}_{L^2(\Ga)}\norm{\Gradg\xi}_{L^2(\Ga)}  + \norm{G'(\psi_m)}_{L^2(\Ga)}\norm{\xi}_{L^2(\Ga)}\\
            &\quad + K^{-1}\norm{\alpha\psi_m-\phi_m}_{L^2(\Ga)}\norm{\alpha\zeta-\xi}_{L^2(\Ga)} \\
            &\leq C\Big(1 + \norm{\phi_m}_{H^1(\Om)} + \norm{\phi_m}_{L^6(\Om)}^5\Big)\norm{\zeta}_{H^1(\Om)} \\
            &\quad + C\Big(1 + \norm{\psi_m}_{H^1(\Ga)} +  \norm{\psi_m}_{L^{2q}(\Ga)}^q\Big)\norm{\xi}_{H^1(\Ga)} \\
            &\leq C\left(1 + \norm{\phi_m}^5_{H^1(\Om)} + \norm{\psi_m}^q_{H^1(\Ga)}\right)\norm{\scp{\zeta^\ast}{\xi^\ast}}_{\mathcal{H}^1}
        \end{split}
    \end{align}
    on $[0,T_m]$. Taking the supremum over all $\scp{\zeta^\ast}{\xi^\ast}\in\mathcal{H}^1$ with $\norm{\scp{\zeta^\ast}{\xi^\ast}}_{\mathcal{H}^1}\leq 1$, and exploiting \eqref{EST:PPFULLH^1}, we conclude that
    \begin{align}\label{EST:mu-theta-dual}
        \norm{\scp{\mu_m}{\theta_m}}_{L^\infty(0,T_m;(\mathcal{H}^1)')} \leq C.
    \end{align}
    Now, due to Lemma~\ref{LEMMA:DUALITY}, we have
    \begin{align}\label{EST:dual}
        \norm{\scp{\mu_m}{\theta_m}}_{\mathcal{L}^2}^2 \leq 2\norm{\scp{\mu_m}{\theta_m}}_{(\mathcal{H}^1)^\prime}\norm{\scp{\Grad\mu_m}{\Gradg\theta_m}}_{\mathcal{L}^2} + \norm{\scp{\mu_m}{\theta_m}}_{(\mathcal{H}^1)^\prime}^2
    \end{align}
    on $[0,T_m]$. Utilizing \eqref{EST:GRADMT+DIFF} and \eqref{EST:mu-theta-dual}, we deduce from \eqref{EST:dual} that
    \begin{align}\label{EST:mu-theta-L^4-L^2}
        \norm{\scp{\mu_m}{\theta_m}}_{L^4(0,T_m;\mathcal{L}^2)} \leq C.
    \end{align}
    In view of \eqref{EST:GRADMT+DIFF} we additionally infer
    \begin{align}\label{EST:full-mu-theta}
        \norm{\scp{\mu_m}{\theta_m}}_{L^2(0,T_m;\mathcal{H}^1)} \leq C.
    \end{align}
    Lastly, we derive a uniform estimate of the time derivatives. Therefore, we consider again $\scp{\zeta}{\xi}\coloneqq \scp{\projam(\zeta^\ast)}{\projbm(\xi^\ast)}$ for an arbitrary $\scp{\zeta^\ast}{\xi^\ast}\in\mathcal{H}^1$. Recalling that the mobility functions $m_\Om$ and $m_\Ga$ are bounded (see \ref{ASSUMP:MOBILITY}), we use Hölder's inequality as well as the continuous embeddings $H^1(\Om)\emb L^6(\Om)$ and $H^1(\Ga)\emb L^6(\Ga)$ to infer that
    \begin{align*}
        \begin{split}
            \abs{\ang{\scp{\delt\phi_m}{\delt\psi_m}}{\scp{\zeta^\ast}{\xi^\ast}}_{\mathcal{H}^1}} &= \abs{\ang{\scp{\delt\phi_m}{\delt\psi_m}}{\scp{\zeta}{\xi}}_{\mathcal{H}^1}} \\
            &\leq\norm{\phi_m}_{H^1(\Om)}\norm{\boldsymbol{v}}_{L^3(\Om)}\norm{\zeta}_{H^1(\Om)} + C\norm{\mu_m}_{H^1(\Om)}\norm{\zeta}_{H^1(\Om)} \\
            &\quad + \norm{\psi_m}_{H^1(\Ga)}\norm{\boldsymbol{w}}_{L^3(\Ga)}\norm{\xi}_{H^1(\Ga)} + C\norm{\theta_m}_{H^1(\Ga)}\norm{\xi}_{H^1(\Ga)} \\
            &\quad + L^{-1}\norm{\beta\theta_m-\mu_m}_{L^2(\Ga)}\norm{\beta\xi-\zeta}_{L^2(\Ga)} \\
            % &\leq C\Big(\norm{\scp{\phi_m}{\psi_m}}_{\mathcal{H}^1}\norm{\boldsymbol{v}}_{H^1(\Om)} + \norm{\scp{\mu_m}{\theta_m}}_{\mathcal{H}^1}\Big) \norm{\scp{\zeta^\ast}{\xi^\ast}}_{\mathcal{H}^1}
        \end{split}
    \end{align*}
    on $[0,T_m]$. Hence, after taking the supremum over all $\scp{\zeta^\ast}{\xi^\ast}\in\mathcal{H}^1$ satisfying $\norm{\scp{\zeta^\ast}{\xi^\ast}}_{\mathcal{H}^1} \leq 1$, taking the square of the resulting inequality and integrating in time over $[0,T_m]$, we deduce that
    \begin{align*}
        \norm{\scp{\delt\phi_m}{\delt\psi_m}}_{L^2(0,T_m;(\mathcal{H}^1)')}^2 &\leq C\norm{\phi_m}_{L^\infty(0,T_m;H^1(\Om))}^2\norm{\boldsymbol{v}}_{L^2(0,T_m;L^3(\Om))}^2 \\
        &\qquad + C\norm{\psi_m}_{L^\infty(0,T_m;H^1(\Ga))}^2\norm{\boldsymbol{w}}_{L^2(0,T_m;L^3(\Ga))}^2 \\
        &\qquad + C\norm{\scp{\Grad\mu_m}{\Gradg\theta_m}}_{L^2(0,T_m;\mathcal{L}^2)}^2 \\
        &\qquad + C\norm{\beta\theta_m-\mu_m}_{L^2(0,T_m;L^2(\Ga))}^2.
        % C\Big(\norm{\scp{\phi_m}{\psi_m}}_{L^\infty(0,T_m;\mathcal{H}^1)}^2\norm{\boldsymbol{v}}_{L^2(0,T_m;H^1(\Om))}^2 \\
        % &\qquad + \norm{\scp{\mu_m}{\theta_m}}_{L^2(0,T_m;\mathcal{H}^1)}^2\Big).
    \end{align*}
    Exploiting the uniform estimates \eqref{EST:PPFULLH^1} and \eqref{EST:full-mu-theta} we conclude
    \begin{align}\label{EST:delt-uniform}
        \norm{\scp{\delt\phi_m}{\delt\psi_m}}_{L^2(0,T_m;(\mathcal{H}^1)')} \leq C.
    \end{align} 
    \textbf{Step 3: Extension of the approximate solution onto} $[0,T]$. Using the definition of the approximate solution \eqref{galerkin-ansatz} and the uniform estimate \eqref{EST:PPFULLH^1}, we obtain for any $T_m\in[0,T_m^\ast)$, all $t\in[0,T_m]$ and $j\in\{1,\ldots,m\}$
        \begin{align*}
        \abs{a_j^m(t)} + \abs{b_j^m(t)} &= \abs{\scp{\phi_m(t)}{\zeta_j}_{L^2(\Om)}} + \abs{\scp{\psi_m(t)}{\xi_j}_{L^2(\Ga)}} \\
        &\leq \norm{\phi_m}_{L^\infty(0,T_m;L^2(\Om))} + \norm{\psi_m}_{L^\infty(0,T_m;L^2(\Ga))} \leq C.
    \end{align*}
    This shows that the solution $(\mathbf{a}^m,\mathbf{b}^m)^\top$ is bounded on the time interval $[0,T_m^\ast)$ by a constant that is independent of $m$ and $T_m^\ast$. Hence, by classical ODE theory, this allows us to extend the solution beyond the time $T_m^\ast$. However, as $\scp{\mathbf{a}^m}{\mathbf{b}^m}^\top$ was assumed to be non-extendable, this is a contradiction. We thus infer that the solution $\scp{\mathbf{a}^m}{\mathbf{b}^m}^\top$ actually exists on $[0,T]$. Since the coefficients $\scp{\mathbf{c}^m}{\mathbf{d}^m}^\top$ can be reconstructed from $(\mathbf{a}^m,\mathbf{b}^m)^\top$, we further conclude that the coefficients $(\mathbf{c}^m,\mathbf{d}^m)^\top$ also exist on $[0,T]$. This automatically entails that the approximate solution $(\phi_m,\psi_m,\mu_m,\theta_m)$ exists on $[0,T]$ and satisfies the discretized weak formulation \eqref{DWF} on $[0,T]$ for all $m\in\N$. Additionally, as the particular choice of $T_m$ did not play any role in the derivation of the uniform estimates established in Step 2, we infer that the estimates \eqref{EST:PPFULLH^1}, \eqref{EST:GRADMT+DIFF}, \eqref{EST:mu-theta-L^4-L^2}, \eqref{EST:full-mu-theta} and \eqref{EST:delt-uniform}  remain true after replacing $T_m$ with $T$. In summary, we conclude that for each $m\in\N$, the approximate solution $(\phi_m,\psi_m,\mu_m,\theta_m)$ satisfies the uniform estimate
    \begin{align}\label{EST:uniform-final}
        \begin{split}
            &\norm{\scp{\delt\phi_m}{\delt\psi_m}}_{L^2(0,T;(\mathcal{H}^1)')} + \norm{\scp{\phi_m}{\psi_m}}_{L^\infty(0,T;\mathcal{H}^1)} 
            + \norm{\alpha\psi_m-\phi_m}_{L^\infty(0,T;L^2(\Ga))}
             \\
           &\;\; + \norm{\scp{\mu_m}{\theta_m}}_{L^2(0,T;\mathcal{H}^1)} 
           + \norm{\scp{\mu_m}{\theta_m}}_{L^4(0,T;\mathcal{L}^2)}
           + \norm{\beta\theta_m-\mu_m}_{L^2(0,T;L^2(\Ga))}  
            \leq C.
        \end{split}
    \end{align}

    \textbf{Step 4: Convergence to a weak solution.}
    In view of the uniform estimate \eqref{EST:uniform-final}, the Banach--Alaoglu theorem and the Aubin--Lions--Simon lemma imply the existence of functions $\phi, \psi, \mu$ and $\theta$ such that
    \begin{alignat}{3}
        \scp{\delt\phi_m}{\delt\psi_m} &\rightarrow \scp{\delt\phi}{\delt\psi} \quad\quad&&\text{weakly in } L^2(0,T;(\mathcal{H}^1)'),\label{CONV:delt} \\[0.3em]
        \scp{\phi_m}{\psi_m} &\rightarrow  \scp{\phi}{\psi} &&\text{weakly-star in } L^\infty(0,T;\mathcal{H}^1),\nonumber\\
        & &&\text{strongly in } C([0,T];\mathcal{H}^s) \text{ for all } s\in[0,1), \label{CONV:PP} \\[0.3em]
        \scp{\mu_m}{\theta_m} &\rightarrow \scp{\mu}{\theta} &&\text{weakly in } L^4(0,T;\mathcal{L}^2)\cap L^2(0,T;\mathcal{H}^1), \label{CONV:MT} \\[0.3em]
        \beta\theta_m - \mu_m &\rightarrow \beta\theta - \mu &&\text{weakly in } L^2(0,T;L^2(\Ga)),\label{CONV:TM:B} \\[0.3em]
        \alpha\psi_m - \phi_m &\rightarrow \alpha\psi - \phi &&\text{weakly-star in } L^\infty(0,T;L^2(\Ga)),
    \end{alignat}
    as $m\rightarrow\infty$, along a non-relabeled subsequence. From these convergences, we readily obtain the desired regularities \eqref{REGPP}--\eqref{REGMT} of the functions $\phi, \psi, \mu$ and $\theta$. Hence, Definition \ref{DEF:WS:REG} is fulfilled. 
    
    Using Sobolev's embedding theorem, we infer from \eqref{CONV:PP} that
    \begin{alignat}{3}
        \phi_m&\rightarrow\phi\quad\quad &&\text{strongly in } C([0,T];L^r(\Om)) \text{ for all } r\in[2,6), \text{ and a.e.~in } Q, \label{CONV:ph}\\
        \psi_m&\rightarrow\psi &&\text{strongly in } C([0,T];L^r(\Ga)) \text{ for all } r\in[2,\infty), \text{ and a.e.~on } \Sigma \label{CONV:ps},
    \end{alignat}
    as $m\rightarrow\infty$, after another subsequence extraction. Further, by the trace theorem, \eqref{CONV:PP} additionally implies that
    \begin{align}\label{CONV:ps-ph}
        \alpha\psi_m-\phi_m\rightarrow\alpha\psi-\phi\quad\quad \text{strongly in } C([0,T];L^2(\Ga))
    \end{align}
    as $m\rightarrow\infty$, after another subsequence extraction. Moreover, due to the growth conditions on $F'$ from \ref{ASSUMP:POTENTIALS:1}, the uniform estimate \eqref{EST:PPFULLH^1} and the Sobolev embedding $H^1(\Om)\emb L^6(\Om)$, we deduce that $F'(\phi_m)$ is bounded in $L^{6/5}(Q)$ uniformly in $m\in\N$. Hence, there exists a function $f\in L^{6/5}(Q)$ such that $F'(\phi_m)\rightarrow f$ weakly in $L^{6/5}(Q)$ as $m\rightarrow\infty$. Considering that $F'(\phi_m)\rightarrow F'(\phi)$ a.e.~in $Q$ as $m\rightarrow\infty$ due to \eqref{CONV:ph}, we infer from a convergence principle based on Egorov's theorem that $F'(\phi) = f$. We thus conclude that
    \begin{align}
        F'(\phi_m)\rightarrow F'(\phi) \quad\quad\text{weakly in } L^{6/5}(Q) \text{ and a.e.~in } Q
    \end{align}
    as $m\rightarrow\infty$. Recalling the growth conditions on $G$ and $G'$ (see \ref{ASSUMP:POTENTIALS:1}), the convergence in \eqref{CONV:ps} in combination with Lebesgue's general convergence theorem yields
    \begin{alignat}{3}
        G(\psi_m)&\rightarrow G(\psi)\quad\quad&&\text{strongly in } L^1(\Sigma), \text{ and a.e.~on } \Sigma, \label{CONV:G} \\
        G'(\psi_m)&\rightarrow G'(\psi)&&\text{strongly in } L^2(\Sigma), \text{ and a.e.~on } \Sigma \label{CONV:G'},
    \end{alignat}
    as $m\rightarrow\infty$.
    Furthermore, by means of Lebesgue's dominated convergence theorem, it follows from \eqref{CONV:ph}, \eqref{CONV:ps} and \ref{ASSUMP:MOBILITY} that
    \begin{alignat}{3}
        m_\Om(\phi_m)&\rightarrow m_\Om(\phi) \quad&&\text{strongly in } L^r(Q), \text{ and a.e.~in Q}\label{CONV:mobo}, \\
        m_\Ga(\psi_m)&\rightarrow m_\Ga(\psi) &&\text{strongly in } L^r(\Sigma), \text{ and a.e.~on }\Sigma\label{CONV:mobg},
    \end{alignat}
    for all $r\in[2,\infty)$, as $m\rightarrow\infty$. Moreover, as the mobility functions $m_\Om$ and $m_\Ga$ are bounded (see \ref{ASSUMP:MOBILITY}), we use Lebesgue's general convergence theorem along with the weak-strong convergence principle to find that
    \begin{alignat}{3}
        \sqrt{m_\Om(\phi_m)}\Grad\mu_m &\rightarrow \sqrt{m_\Om(\phi)}\Grad\mu  \quad\quad&&\text{weakly in } L^2(Q), \label{CONV:sqmob:mu} \\
        \sqrt{m_\Ga(\psi_m)}\Gradg\theta_m &\rightarrow \sqrt{m_\Ga(\psi)}\Gradg\theta &&\text{weakly in } L^2(\Sigma), \label{CONV:sqmob:theta} \\
         m_\Om(\phi_m) \Grad\mu_m &\rightarrow m_\Om(\phi)\Grad\mu&&\text{weakly in } L^2(Q), \label{CONV:mob:mu} \\
        m_\Ga(\psi_m) \Gradg\theta_m &\rightarrow m_\Ga(\psi) \Gradg\theta &&\text{weakly in } L^2(\Sigma), \label{CONV:mob:th}
    \end{alignat}
    as $m\rightarrow\infty.$
    Lastly, exploiting the convergences \eqref{CONV:MT}, \eqref{CONV:ph} and \eqref{CONV:ps}, and using 
    again the weak-strong convergence principle,
    % Lebesgue's dominated convergence theorem, 
    we infer
    \begin{alignat}{3}
        \phi_m\,\boldsymbol{v}\cdot\Grad\mu_m&\rightarrow\phi\,\boldsymbol{v}\cdot\Grad\mu\quad\quad\quad&&\text{strongly in } L^1(Q), \text{ and a.e.~in } Q, \label{CONV:conv:pm} \\
        \psi_m\,\boldsymbol{w}\cdot\Gradg\theta_m&\rightarrow\psi\,\boldsymbol{w}\cdot\Gradg\theta&&\text{strongly in } L^1(\Sigma), \text{ and a.e.~on } \Sigma, \label{CONV:conv:pt}
    \end{alignat}
    as $m\rightarrow\infty$. 
    
    We now multiply both equations of the discretized weak formulation \eqref{DWF} with an arbitrary test function $\sigma\in C_c^\infty([0,T])$ and integrate in time from $0$ to $T$. Using the convergences \eqref{CONV:delt}--\eqref{CONV:conv:pt} established above, we may pass to the limit $m\rightarrow\infty$ to infer that
    \begin{subequations}\label{EQ:integrated-weak-formulation}
        \begin{align}
            &\int_0^T \ang{\scp{\delt\phi}{\delt\psi}}{\scp{\zeta_j}{\xi_j}}_{\mathcal{H}^1}\,\sigma\dt - \int_0^T\intO \phi\boldsymbol{v}\cdot\Grad\zeta_j\,\sigma\dxt - \int_0^T\intG \psi\boldsymbol{w}\cdot\Gradg\xi_j\,\sigma\dGt \nonumber \\
            \quad&= - \int_0^T\intO m_\Om(\phi)\Grad\mu\cdot\Grad\zeta_j\,\sigma\dxt - \int_0^T\intG  m_\Ga(\psi)\Gradg\theta\cdot\Gradg\xi_j\,\sigma\dGt \\
            &\quad - L^{-1}\int_0^T\intG(\beta\theta-\mu)(\beta\xi_j - \zeta_j)\,\sigma\dGt, \nonumber\\
            &\int_0^T\intO \mu\,\zeta_j\,\sigma\dxt + \int_0^T\intG\theta\,\xi_j\,\sigma\dGt  \\
            &= \int_0^T\intO\Grad\phi\cdot\Grad\zeta_j\,\sigma + F'(\phi)\zeta_j\,\sigma \dxt + \int_0^T\intG\Gradg\psi\cdot\Gradg\xi_j\,\sigma + G'(\psi)\xi_j\,\sigma\dGt \nonumber \\
            &\quad + K^{-1}\int_0^T\intG(\alpha\psi-\phi)(\alpha\xi_j - \zeta_j)\,\sigma \dGt, \nonumber
        \end{align}
    \end{subequations}
    hold for all $j\in\N$ and all test functions $\sigma\in C_c^\infty([0,T])$. Since $\text{span}\{\scp{\zeta_j}{\xi_j} : j\in\N\}$ is dense in $\mathcal{H}^1$, this proves that the quadruplet $(\phi,\psi,\mu,\theta)$ satisfies the weak formulation \eqref{WF} for all test functions $\scp{\zeta}{\xi}\in\mathcal{H}^1$. We thus conclude that Definition \ref{DEF:WS:WF} is fulfilled. 

    Since the approximate solutions $\scp{\phi_m}{\psi_m}$ satisfy the discrete mass conservation law (see \eqref{EQ:DMCL}) for all $m\in\N$, we can use the convergence \eqref{CONV:PP} to deduce that $\scp{\phi}{\psi}$ satisfies the mass conservation law \eqref{MCL} as well.
    Thus, Definition \ref{DEF:WS:MCL} is fulfilled. \\ [0.3em]
    %%%%%
    Concerning the initial conditions, we infer from \eqref{DWF:IC}, due to the convergence properties of the projections, that
    \begin{align*}
        \scp{\phi_m(0)}{\psi_m(0)}\rightarrow\scp{\phi_0}{\psi_0}\quad\text{strongly in }\mathcal{L}^2
    \end{align*}
    as $m\rightarrow\infty$. Additionally, the convergences \eqref{CONV:ph} and \eqref{CONV:ps} yield
    \begin{align*}
        \scp{\phi_m(0)}{\psi_m(0)}\rightarrow\scp{\phi(0)}{\psi(0)}\quad\text{strongly in }\mathcal{L}^2
    \end{align*}
    as $m\rightarrow\infty$. We deduce that $\phi(0) =  \phi_0$ a.e.~in $\Om$ and $\psi(0) = \psi_0$ a.e.~on $\Ga$. This proves that Definition \ref{DEF:WS:IC} is fulfilled.

    To verify the energy inequality \eqref{WEDL}, we consider an arbitrary non-negative test function $\sigma\in C_c^\infty(0,T)$. Multiplying \eqref{EQ:DEI} with $\sigma$ and integrating in time from $0$ to $T$ yields
    \begin{align}
        \begin{split}
            &\int_0^T E_K(\phi_m(t),\psi_m(t))\,\sigma(t)\dt \\
            &\quad\quad + \int_0^T\int_0^t\intO\left( m_\Om(\phi_m)\abs{\Grad\mu_m}^2 - \phi_m\boldsymbol{v}\cdot\Grad\mu_m\right)\sigma(t)\dx\ds\dt \\
            &\quad\quad + \int_0^T\int_0^t\intG\left( m_\Ga(\psi_m)\abs{\Gradg\theta_m}^2 - \psi_m\boldsymbol{w}\cdot\Gradg\theta_m\right)\sigma(t)\dG\ds\dt \\
            &\quad\quad + L^{-1} \int_0^T\int_0^t\intG (\beta\theta_m-\mu_m)^2\sigma(t)\dG\ds\dt \\
            &\ = \int_0^T E_K(\phi_m(0),\psi_m(0))\,\sigma(t)\dt.
        \end{split}
    \end{align}
    As $m\rightarrow\infty$, we infer from \eqref{CONV:G} that
    \begin{align}\label{CONV:GINT}
        \int_0^T \left(\intG G(\psi_m)\dG\right)\sigma(t)\dt \rightarrow \int_0^T\left(\intG G(\psi)\dG\right)\sigma(t)\dt.
    \end{align}
    Additionally, using \eqref{CONV:ph} and \ref{ASSUMP:POTENTIALS:1}, Fatou's lemma implies
    \begin{align}
        \int_0^T\left(\intO F(\phi)\dx\right)\sigma(t)\dt \leq\liminf_{m\rightarrow\infty} \int_0^T\left(\intO F(\phi_m)\dx\right)\sigma(t)\dt.
    \end{align}
    Moreover, from the strong convergence in \eqref{CONV:ps-ph} we obtain
    \begin{align}
        \int_0^T\left(\intG \abs{\alpha\psi_m-\phi_m}^2\dG\right)\sigma(t)\dt \rightarrow \int_0^T\left(\intG \abs{\alpha\psi-\phi}^2\dG\right)\sigma(t)\dt,
    \end{align}
    as $m\rightarrow\infty$. Lastly, in view of \eqref{CONV:PP} and the weak lower semicontinuity of norms, we have
    \begin{align}\label{CONV:LSCPP}
        \begin{split}
            &\int_0^T\left(\frac{1}{2}\norm{\Grad\phi}_{L^2(\Om)}^2 + \frac{1}{2}\norm{\Gradg\psi}_{L^2(\Ga)}^2\right)\sigma(t)\dt \\
            &\quad\leq \liminf_{m\rightarrow\infty} \int_0^T\left(\frac{1}{2}\norm{\Grad\phi_m}^2_{L^2(\Om)} + \frac{1}{2}\norm{\Gradg\psi_m}^2_{L^2(\Ga)}\right)\sigma(t)\dt.
            \end{split}
    \end{align}
    Combining \eqref{CONV:GINT}--\eqref{CONV:LSCPP} yields
    \begin{align}\label{CONV:LSC:E_K}
        \int_0^T E_K(\phi(t),\psi(t))\,\sigma(t)\dt \leq \liminf_{m\rightarrow\infty} \int_0^T E_K(\phi_m(t),\psi_m(t))\,\sigma(t) \dt.
    \end{align}
    As a consequence of \eqref{CONV:sqmob:mu}, \eqref{CONV:sqmob:theta} and the weak lower semicontinuity of norms with respect to the weak convergence, another application of Fatou's lemma entails that
    \begin{align}\label{CONV:LSCMT}
        \begin{split}
            &\int_0^T\int_0^t\intO m_\Om(\phi)\abs{\Grad\mu}^2\,\sigma(t)\dxs\dt + \int_0^T\int_0^t\intG m_\Ga(\psi)\abs{\Gradg\theta}^2\,\sigma(t)\dGs\dt \\
            &\leq\liminf_{m\rightarrow\infty}\left[ \int_0^T\int_0^t\intO m_\Om(\phi_m)\abs{\Grad\mu_m}^2\,\sigma(t)\dxs\dt \right.\\
            &\qquad\qquad\qquad\left.
            + \int_0^T\int_0^t\intG m_\Ga(\psi_m)\abs{\Gradg\theta_m}^2\,\sigma(t)\dGs\dt
            \right].
            \end{split}
    \end{align}
    Further, due to the convergences \eqref{CONV:conv:pm} and \eqref{CONV:conv:pt}, we find that
    \begin{align}
        \begin{split}
            &\int_0^T \int_0^t \intO \phi_m\boldsymbol{v}\cdot\Grad\mu_m\,\sigma(t)\dxs\dt + \int_0^T\int_0^t\intG \psi_m\boldsymbol{w}\cdot\Gradg\theta_m\,\sigma(t)\dGs\dt \\
            &\quad \longrightarrow \int_0^T \int_0^t \intO \phi\boldsymbol{v}\cdot\Grad\mu\,\sigma(t)\dxs\dt + \int_0^T\int_0^t\intG \psi\boldsymbol{w}\cdot\Gradg\theta\,\sigma(t)\dGs\dt
        \end{split}
    \end{align}
    as $m\rightarrow\infty$.
    Additionally, as convex and continuous functionals are weakly lower semicontinuous, we conclude from \eqref{CONV:TM:B} that
    \begin{align}\label{CONV:LSC:T-M}
        \int_0^T\int_0^t\intG (\beta\theta-\mu)^2 \,\sigma(t)\dGs\dt \leq \liminf_{m\rightarrow\infty} \int_0^T\int_0^t\intG (\beta\theta_m-\mu_m)^2 \,\sigma(t)\dGs\dt.
    \end{align}
    Lastly, recalling the growth conditions on $F$ and $G$ (see \ref{ASSUMP:POTENTIALS:1}), we use the initial conditions \eqref{DWF:IC} as well as the convergence properties of the projections $\projam$ and $\projbm$ together with Lebesgue's general convergence theorem to infer that
    \begin{align}\label{CONV:IE}
        \lim_{m\rightarrow\infty} E_K(\phi_m(0),\psi_m(0)) = E_K(\phi_0,\psi_0).
    \end{align}
    Collecting \eqref{CONV:LSCMT}--\eqref{CONV:IE} finally yields
    \begin{align}\label{WEDL:AAT}
        \begin{split}
            &E_K(\phi(t),\psi(t)) + \int_0^t\intO m_\Om(\phi)\abs{\Grad\mu}^2\dxs + \int_0^t\intG m_\Ga(\psi)\abs{\Gradg\theta}^2\dGs \\
            &\quad - \int_0^t\intO\phi\boldsymbol{v}\cdot\Grad\mu\dxs - \int_0^t\intG \psi\boldsymbol{w}\cdot\Gradg\theta\dGs \\
            &\quad + L^{-1} \int_0^t\intG (\beta\theta-\mu)^2\dGs \\
            &\leq E_K(\phi_0,\psi_0)
        \end{split}
    \end{align}
    for almost every $t\in [0,T]$. To prove that \eqref{WEDL:AAT} holds for every $t\in[0,T]$, first note that all integral terms in \eqref{WEDL:AAT} depend continuously on time. 
    Furthermore, due to $(\phi,\psi)\in C([0,T];\mathcal{L}^2)$, we deduce that the following functions
    \begin{align*}
        t\mapsto \norm{\Grad\phi(t)}_{L^2(\Om)}^2, \quad t\mapsto\norm{\Gradg\psi(t)}_{L^2(\Ga)}^2, \quad t\mapsto \intO F(\phi(t)) \dx, \quad t\mapsto \intG G(\psi(t)) \dG
    \end{align*}
    are lower semicontinuous on $[0,T]$. For the first two functions, this is a consequence of the lower semicontinuity of the respective norms, while for the last two functions, it follows from Fatou's lemma. This already entails that the weak solution $(\phi,\psi,\mu,\theta)$ satisfies the energy inequality for \textit{all} times $t\in[0,T]$. Thus, Definition \ref{DEF:WS:WEDL} is fulfilled. \\[0.3em]
    We have therefore shown that the quadruplet $(\phi,\psi,\mu,\theta)$ is a weak solution of system \eqref{EQ:SYSTEM} in the sense of Definition \ref{DEF:WS}. 
    %\\[1ex]
\end{proof}

\begin{remark} \label{REM:GAL}
    This proof should work similarly in the cases $K=L=0$ and $K=L=\infty$. In the first case, one uses a Faedo--Galerkin scheme as in \cite{Giorgini2023} based on eigenfunctions of a suitable bulk-surface elliptic problem with Dirichlet type coupling condition (cf.~\cite[Theorem 3.3]{Knopf2021}). In the second case, the bulk-surface Cahn--Hilliard equation \eqref{EQ:SYSTEM} reduces to two, uncoupled Cahn--Hilliard equations, one in $\Om$ and one on $\Ga$. Therefore, a Faedo--Galerkin basis as in the above proof can be used. Also note that, while we do not have the bulk-surface Poincar\'{e} inequality at our disposal anymore, we can use the standard Poincar\'{e} inequality for functions with vanishing mean, since the approximate solutions satisfy the mass conservation law \eqref{MCL} for $L=\infty$.
\end{remark}

\section{Asymptotic limits and existence of weak solutions to the limit models}
\label{SECT:ASYMPTLIM}
In this section, we investigate the asymptotic limits $K\rightarrow 0$ and $K\rightarrow\infty$, and $L\rightarrow 0$ and $L\rightarrow\infty$ of the system \eqref{EQ:SYSTEM}.

\begin{remark}
    In this section, we will need the additional assumption \ref{ASSUMP:POTENTIALS:2} when approaching the limit cases $K\in\{0,\infty\}$. This is because the constant in the bulk-surface Poincar\'{e} inequality \ref{PRELIM:POINCINEQ} depends on $K$ in some way, but we do not know this dependence explicitly. In particular, it is unclear how this constant behaves if we send $K$ to zero or infinity, respectively. Therefore, we cannot rely on this Poincar\'{e} inequality to obtain suitable uniform bounds, but instead, we use \ref{ASSUMP:POTENTIALS:2} to directly obtain uniform bounds from the energy functional. 
\end{remark}

\subsection{The limit \texorpdfstring{$K\rightarrow 0$}{K -> 0} and the existence of a weak solution if \texorpdfstring{$(K,L)\in\{0\}\times(0,\infty)$}{K = 0}}

\begin{theorem}\textnormal{(Asymptotic limit $K\rightarrow 0$)}\label{THEOREM:K->0}
    Suppose that the assumptions \ref{ASSUMP:1}--\ref{ASSUMP:POTENTIALS:2} hold, let $\scp{\phi_0}{\psi_0}\in\mathcal{D}_\alpha$ be arbitrary initial data, $L\in(0,\infty)$, $\boldsymbol{v}\in L^2(0,T;\mathbf{L}_\Div^3(\Om))$ and $\boldsymbol{w}\in L^2(0,T;\mathbf{L}^3_\tau(\Ga))$. For any $K\in(0,\infty)$, let $(\phi_K,\psi_K,\mu_K,\theta_K)$ denote a weak solution of the system \eqref{EQ:SYSTEM} in the sense of Definition~\ref{DEF:WS} with initial data $\scp{\phi_0}{\psi_0}$. 
    Then, there exists a quadruplet $(\phi_\ast,\psi_\ast,\mu_\ast,\theta_\ast)$ with $\phi_\ast = \alpha\psi_\ast$ a.e.~on $\Sigma$ such that
    \begin{alignat*}{3}
        \scp{\delt\phi_K}{\delt\psi_K} &\rightarrow \scp{\delt\phi_\ast}{\delt\psi_\ast} \quad\quad&&\text{weakly in } L^2(0,T;(\mathcal{H}^1)'), \\[0.3em]
        \scp{\phi_K}{\psi_K} &\rightarrow  \scp{\phi_\ast}{\psi_\ast} &&\text{weakly-star in } L^\infty(0,T;\mathcal{H}^1),\nonumber\\
        & &&\text{strongly in } C([0,T];\mathcal{H}^s) \text{ for all } s\in[0,1),\\[0.3em]
        \scp{\mu_K}{\theta_K} &\rightarrow \scp{\mu_\ast}{\theta_\ast} &&\text{weakly in } L^2(0,T;\mathcal{H}^1), 
        \\[0.3em]
        \alpha\psi_K - \phi_K &\rightarrow 0 &&\text{strongly in } L^\infty(0,T,L^2(\Ga)),
    \end{alignat*}
    as $K\rightarrow 0$, up to subsequence extraction, with
    \begin{align}
    \label{EST:K->0}
        \norm{\alpha\psi_K-\phi_K}_{L^\infty(0,T;L^2(\Ga))} \leq C\sqrt{K},
    \end{align}
    and the limit $(\phi_\ast,\psi_\ast,\mu_\ast,\theta_\ast)$ is a weak solution to the system \eqref{EQ:SYSTEM} in the sense of Definition~\ref{DEF:WS} with $K=0$.
\end{theorem}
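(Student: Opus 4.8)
The plan is to pass to the limit $K\to 0$ along the given family of weak solutions, mimicking the structure of the Faedo--Galerkin argument in Theorem~\ref{thm:existence}, the crucial new point being that the uniform (in $K$) estimates must be obtained \emph{without} the bulk-surface Poincaré inequality \ref{PRELIM:POINCINEQ}, whose constant depends on $K$ in an uncontrolled way.

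\emph{Uniform estimates.} Every $(\phi_K,\psi_K,\mu_K,\theta_K)$ satisfies the energy inequality \eqref{WEDL}. Since $\scp{\phi_0}{\psi_0}\in\mathcal{D}_\alpha$ we have $\alpha\psi_0-\phi_0=0$ on $\Ga$, hence $E_K(\phi_0,\psi_0)=E_0(\phi_0,\psi_0)$ independently of $K$. Using \ref{ASSUMP:POTENTIALS:2} in the form $\norm{\phi_K}_{H^1(\Om)}^2+\norm{\psi_K}_{H^1(\Ga)}^2\le C\big(1+E_K(\phi_K,\psi_K)\big)$, absorbing the gradients of $\mu_K,\theta_K$ in the convective terms into the dissipation via Young's inequality and \ref{ASSUMP:MOBILITY}, and applying Gronwall's lemma with the fixed fields $\boldsymbol v\in L^2(0,T;\mathbf{L}^3_\Div(\Om))$, $\boldsymbol w\in L^2(0,T;\mathbf{L}^3_\tau(\Ga))$, we obtain a constant $C$ independent of $K$ with $\norm{\scp{\phi_K}{\psi_K}}_{L^\infty(0,T;\mathcal{H}^1)}^2+K^{-1}\norm{\alpha\psi_K-\phi_K}_{L^\infty(0,T;L^2(\Ga))}^2+\norm{\Grad\mu_K}_{L^2(Q)}^2+\norm{\Gradg\theta_K}_{L^2(\Sigma)}^2+L^{-1}\norm{\beta\theta_K-\mu_K}_{L^2(\Sigma)}^2\le C$; in particular \eqref{EST:K->0} follows. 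The delicate point is the uniform control of the full $\mathcal{H}^1$-norm of $(\mu_K,\theta_K)$, i.e.\ of the means $\meano{\mu_K},\meang{\theta_K}$: testing \eqref{WF:MT} with $(1,0)$ or $(0,1)$ produces $\h(K)=K^{-1}$ contributions that are only $O(K^{-1/2})$. Instead one tests \eqref{WF:MT} with $(\alpha,1)\in\mathcal{D}_\alpha$, for which $\alpha\vartheta-\eta\equiv0$, so the $\h(K)$-term and all gradient terms drop and $\alpha\abs{\Om}\meano{\mu_K}+\abs{\Ga}\meang{\theta_K}=\alpha\intO F'(\phi_K)\dx+\intG G'(\psi_K)\dG$, whose right-hand side is bounded in $L^\infty(0,T)$ by \ref{ASSUMP:POTENTIALS:1}, the $\mathcal H^1$-bound and Sobolev embeddings. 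A second relation follows from $\norm{\beta\theta_K-\mu_K}_{L^2(\Sigma)}\le C$ combined with Poincaré--Wirtinger applied to $\mu_K-\meano{\mu_K}$, $\theta_K-\meang{\theta_K}$ and the trace inequality, giving $\norm{\beta\meang{\theta_K}-\meano{\mu_K}}_{L^2(0,T)}\le C$. Since $\alpha\beta\abs{\Om}+\abs{\Ga}\neq0$ by \ref{ASSUMP:2}, this $2\times2$ linear system is invertible, whence $\norm{\scp{\mu_K}{\theta_K}}_{L^2(0,T;\mathcal{H}^1)}\le C$. Finally, testing \eqref{WF:PP} with functions in $\mathcal{H}^1_{L,\beta}=\mathcal{H}^1$ (note $L\in(0,\infty)$ is fixed, so $\h(L)=L^{-1}$ is harmless) yields $\norm{\scp{\delt\phi_K}{\delt\psi_K}}_{L^2(0,T;(\mathcal{H}^1)')}\le C$.

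\emph{Compactness and passage to the limit.} By Banach--Alaoglu and the Aubin--Lions--Simon lemma we extract a subsequence realizing all the convergences listed in the statement, with limit $(\phi_\ast,\psi_\ast,\mu_\ast,\theta_\ast)$; the bound \eqref{EST:K->0} forces $\alpha\psi_\ast-\phi_\ast=0$ a.e.\ on $\Sigma$, i.e.\ $\scp{\phi_\ast}{\psi_\ast}\in L^\infty(0,T;\mathcal{D}_\alpha)=L^\infty(0,T;\mathcal{H}^1_{0,\alpha})$. Exactly as in Theorem~\ref{thm:existence}, a.e.\ and strong $\mathcal{L}^2$-convergence of $\phi_K,\psi_K$ with the growth conditions give $F'(\phi_K)\wto F'(\phi_\ast)$ in $L^{6/5}(Q)$, $G'(\psi_K)\to G'(\psi_\ast)$ in $L^2(\Sigma)$, $m_\Om(\phi_K)\Grad\mu_K\wto m_\Om(\phi_\ast)\Grad\mu_\ast$, $m_\Ga(\psi_K)\Gradg\theta_K\wto m_\Ga(\psi_\ast)\Gradg\theta_\ast$, and $\phi_K\boldsymbol v\cdot\Grad\mu_K\to\phi_\ast\boldsymbol v\cdot\Grad\mu_\ast$, $\psi_K\boldsymbol w\cdot\Gradg\theta_K\to\psi_\ast\boldsymbol w\cdot\Gradg\theta_\ast$ in $L^1$. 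In \eqref{WF:PP} the test space $\mathcal{H}^1_{L,\beta}=\mathcal{H}^1$ and $\h(L)$ do not change, so one passes to the limit directly. In \eqref{WF:MT} one restricts to test functions $\scp{\eta}{\vartheta}\in\mathcal{D}_\alpha=\mathcal{H}^1_{0,\alpha}$ (admissible for every $K\in(0,\infty)$): then the singular term $\h(K)\intG(\alpha\psi_K-\phi_K)(\alpha\vartheta-\eta)\dG=0$ vanishes identically for each $K$, and the remaining terms pass to the limit, yielding precisely the weak formulation for $K=0$.

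\emph{Remaining properties.} The initial conditions follow from $\phi_K(0)=\phi_0$, $\psi_K(0)=\psi_0$ and the strong $C([0,T];\mathcal{L}^2)$-convergence; the mass conservation law \eqref{MCL} (case $L\in[0,\infty)$) passes to the limit from that of $(\phi_K,\psi_K)$. For the energy inequality, since $E_K(\phi_K(t),\psi_K(t))\ge E_0(\phi_K(t),\psi_K(t))$ (the extra term is nonnegative), weak lower semicontinuity of the gradient norms and of the dissipation terms (using $\sqrt{m_\Om(\phi_K)}\Grad\mu_K\wto\sqrt{m_\Om(\phi_\ast)}\Grad\mu_\ast$, etc.), Fatou's lemma for the potential terms, convergence of the convective terms, and $E_K(\phi_0,\psi_0)=E_0(\phi_0,\psi_0)$ give \eqref{WEDL} with $K=0$ for a.e.\ $t$, and then for all $t\in[0,T]$ by the continuity/lower semicontinuity argument from the end of the proof of Theorem~\ref{thm:existence}. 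Hence $(\phi_\ast,\psi_\ast,\mu_\ast,\theta_\ast)$ is a weak solution with $K=0$. I expect the main obstacle to be the uniform $\mathcal{H}^1$-bound on $(\mu_K,\theta_K)$: one must carefully avoid the $K^{-1}$-singular contributions by testing against functions in $\mathcal{D}_\alpha$ and then close the estimate on the means through the $2\times2$ system, whose solvability is exactly what the nondegeneracy condition $\alpha\beta\abs{\Om}+\abs{\Ga}\neq0$ provides; replacing the now-unavailable bulk-surface Poincaré inequality by the coercivity assumption \ref{ASSUMP:POTENTIALS:2} is the other place where care is needed.
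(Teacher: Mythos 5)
Your proposal is correct and follows the same overall strategy as the paper: replace the $K$-dependent bulk-surface Poincar\'e inequality by the coercivity assumption \ref{ASSUMP:POTENTIALS:2}, exploit $E_K(\phi_0,\psi_0)=E_0(\phi_0,\psi_0)$ for data in $\mathcal{D}_\alpha$, run Gronwall on the energy inequality, and pass to the limit with test functions $\scp{\eta}{\vartheta}\in\mathcal{D}_\alpha$ in \eqref{WF:MT} so that the $\h(K)$-term drops out. The one place where you genuinely deviate is the uniform $L^2(0,T;\mathcal{H}^1)$ bound on $\scp{\mu_K}{\theta_K}$: the paper bounds the dual norm $\norm{\scp{\mu_K}{\theta_K}}_{L^\infty(0,T;\mathcal{D}_\alpha')}$ by testing \eqref{WF:MT} against arbitrary elements of $\mathcal{D}_\alpha$ and then invokes Lemma~\ref{LEMMA:DUALITY} (with $K=0$), whereas you control the two means $\meano{\mu_K}$, $\meang{\theta_K}$ directly: one linear relation from the constant test function $\scp{\alpha}{1}\in\mathcal{D}_\alpha$, a second from the bound on $\beta\theta_K-\mu_K$ in $L^2(\Sigma)$ combined with Poincar\'e--Wirtinger and the trace inequality, and invertibility of the resulting $2\times2$ system precisely because $\alpha\beta\abs{\Om}+\abs{\Ga}\neq 0$. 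Both arguments are valid and yield the same bound; yours is more explicit about where the nondegeneracy condition from \ref{ASSUMP:2} enters and avoids the abstract duality lemma, while the paper's version is shorter and reuses machinery already set up for Theorem~\ref{thm:existence}. The remaining steps (compactness, identification of $\phi_\ast=\alpha\psi_\ast$ from \eqref{EST:K->0}, limit passage, initial data, mass conservation, and the energy inequality via $E_{K}\geq E_0$ and lower semicontinuity) match the paper's proof.
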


\begin{remark}
    \begin{enumerate}[label=\textnormal{(\alph*)},leftmargin=*]
    \item As the right-hand side in \eqref{EST:K->0} tends to zero as $K\to 0$, this explains why the Dirichlet type boundary condition $\phi_* = \alpha\psi_*$ a.e.~on $\Sigma$ appears in the limit model corresponding to $K=0$.
    \item The result of Theorem~\ref{THEOREM:K->0} remains valid if we replace the initial data $\scp{\phi_0}{\psi_0}\in\mathcal{D}_\alpha$ by a sequence $\scp{\phi_{0,K}}{\psi_{0,K}}\in\mathcal{H}^1$ satisfying
    \begin{align*}
        E_K(\phi_{0,K},\psi_{0,K}) \rightarrow E_0(\phi_0,\psi_0)
        \quad\text{as $K\rightarrow 0$}
    \end{align*}
    for some pair $\scp{\phi_0}{\psi_0}\in\mathcal{H}^1$ (see also \cite[Theorem~2.3]{Knopf2020}). In this case, one can show that $\phi_0 = \alpha\psi_0$ a.e.~on $\Ga$, and that $\scp{\phi_\ast}{\psi_\ast}$ is a weak solution corresponding to the initial data $\scp{\phi_0}{\psi_0}$.
    \end{enumerate}
\end{remark}

\begin{proof}[Proof of Theorem~\ref{THEOREM:K->0}]
    We consider an arbitrary sequence $(K_m)_{m\in\N}\subset(0,\infty)$ such that $K_m\rightarrow 0$ as $m\rightarrow\infty$ and a corresponding sequence of weak solutions $(\phi_{K_m},\psi_{K_m},\mu_{K_m},\theta_{K_m})$ to the initial data $\scp{\phi_0}{\psi_0}$.
    In this proof, we use the letter $C$ to denote generic positive constants independent of $K_m$ and $m$. In order to prove suitable uniform estimates, we make use of the energy inequality and the additional growth assumptions made on $F$ and $G$ (see \ref{ASSUMP:POTENTIALS:2}). 
    Let now $m\in\N$ be arbitrary.    
    First, note that $\phi_0 = \alpha\psi_0$ a.e.~on $\Ga$ directly implies 
    \begin{align}\label{EQ:EKM=E0}
        E_{K_m}(\phi_0,\psi_0) = E_0(\phi_0, \psi_0) \leq C.
    \end{align}
    Next, due to the growth condition \ref{ASSUMP:POTENTIALS:2}, we find that
    \begin{align*}
        \begin{split}
            &\frac12\intO \abs{\Grad\phi_{K_m}}^2\dx + \intO F(\phi_{K_m})\dx + \frac12\intG \abs{\Gradg\psi_{K_m}}^2\dG + \intG G(\psi_{K_m})\dG \\
            &\geq \frac12\intO\abs{\Grad\phi_{K_m}}^2\dx + a_F\intO \abs{\phi_{K_m}}^2\dx - b_F\abs{\Om} \\
            &\qquad + \frac12\intG\abs{\Gradg\psi_{K_m}}^2\dG + a_G\intG\abs{\psi_{K_m}}^2\dG - b_G\abs{\Ga} \\
            &\geq c\norm{\scp{\phi_{K_m}}{\psi_{K_m}}}^2_{\mathcal{H}^1} - b_F\abs{\Om} - b_G\abs{\Ga},
        \end{split}
    \end{align*}
    where $c = \min\{\frac12, a_F, a_G\}$.
    Then, using the energy inequality \eqref{WEDL}, and bounding the convective terms as in \eqref{EST:convective-terms}, we deduce that\vspace{-1ex}
    \begin{align*}%\label{jonas1}
        \begin{split}
            &c\norm{\scp{\phi_{K_m}}{\psi_{K_m}}}_{\mathcal{H}^1}^2 
            + \frac{1}{2} K_m^{-1}\intG \abs{\alpha\psi_{K_m}-\phi_{K_m}}^2\dG
            + \frac{m_\Omega^*}{2} \int_0^t\intO \abs{\Grad\mu_{K_m}}^2\dxs \\
            &\quad + \frac{m_\Gamma^*}{2}\int_0^t\intG\abs{\Gradg\theta_{K_m}}^2\dGs + L^{-1}\int_0^t\intG (\beta\theta_{K_m}-\mu_{K_m})^2 \dGs \\
            &\leq C + C\int_0^t\left(\norm{\boldsymbol{v}}^2_{L^3(\Om)} + \norm{\boldsymbol{w}}_{L^3(\Ga)}^2\right)\norm{\scp{\phi_{K_m}}{\psi_{K_m}}}_{\mathcal{H}^1}^2 \ds
        \end{split}
    \end{align*}
    a.e.~on $[0,T]$. Thus, using Gronwall's lemma, we readily infer that
    \begin{align}\label{est:unif:pre:K->0}
        \begin{split}
            &\norm{\scp{\phi_{K_m}}{\psi_{K_m}}}_{L^\infty(0,T;\mathcal{H}^1)} + K_m^{-1/2}\norm{\alpha\psi_{K_m} - \phi_{K_m}}_{L^\infty(0,T;L^2(\Gamma))} \\
            &\quad + \norm{\scp{\Grad\mu_{K_m}}{\Gradg\theta_{K_m}}}_{L^2(0,T;\mathcal{L}^2)} + \norm{\beta\theta_{K_m} - \mu_{K_m}}_{L^2(0,T;L^2(\Gamma))} \leq C.
        \end{split}
    \end{align}
    This already entails \eqref{EST:K->0}.
    We now test the weak formulation \eqref{WF:MT} for $(\mu_{K_m},\theta_{K_m})$ with $(\eta,\vartheta)\in\mathcal{D}_\alpha$. As $\eta = \alpha\vartheta$ a.e.~on $\Ga$, the corresponding boundary term involving $K_m$ vanishes. Therefore, arguing as in the proof of Theorem~\ref{THEOREM:EOWS}, we deduce
    \begin{align}\label{jonas4}
        \norm{\scp{\mu_{K_m}}{\theta_{K_m}}}_{L^\infty(0,T;\mathcal{D}_\alpha^\prime)} \leq C.
    \end{align}
    Now, using Lemma~\ref{LEMMA:DUALITY}, we find that
    \begin{align*}
        \norm{\scp{\mu_{K_m}}{\theta_{K_m}}}_{\mathcal{L}^2}^2 \leq 2\norm{\scp{\mu_{K_m}}{\theta_{K_m}}}_{\mathcal{D}_\alpha^\prime}\norm{\scp{\Grad\mu_{K_m}}{\Gradg\theta_{K_m}}}_{\mathcal{L}^2} + \norm{\scp{\mu_{K_m}}{\theta_{K_m}}}_{\mathcal{D}_\alpha^\prime}^2.
    \end{align*}
    In combination with \eqref{est:unif:pre:K->0} and \eqref{jonas4}, this yields
    \begin{align}\label{EST:MT:FULL:KM:0}
        \norm{\scp{\mu_{K_m}}{\theta_{K_m}}}_{L^2(0,T;\mathcal{H}^1)}\leq C.
    \end{align}
    Lastly, proceeding similarly as in the derivation of \eqref{EST:delt-uniform} in the proof of Theorem~\ref{THEOREM:EOWS}, it follows from the weak formulation \eqref{WF:PP} and estimate \eqref{est:unif:pre:K->0} that
    \begin{align}\label{EST:DELT:KM:0}
        \norm{\scp{\delt\phi_{K_m}}{\delt\psi_{K_m}}}_{L^2(0,T;(\mathcal{H}^1)^\prime)} \leq C.
    \end{align}
    In summary, combining \eqref{est:unif:pre:K->0}, \eqref{EST:MT:FULL:KM:0} and \eqref{EST:DELT:KM:0}, we have thus shown that
    \begin{align}
        \begin{split}
            &\norm{\scp{\delt\phi_{K_m}}{\delt\psi_{K_m}}}_{L^2(0,T;(\mathcal{H}^1)^\prime)} 
            + \norm{\scp{\phi_{K_m}}{\psi_{K_m}}}_{L^\infty(0,T;\mathcal{H}^1)} \\
            &\quad + \norm{\scp{\mu_{K_m}}{\theta_{K_m}}}_{L^2(0,T;\mathcal{H}^1)} + \norm{\beta\theta_{K_m}-\mu_{K_m}}_{L^2(0,T;L^2(\Ga))} \leq C.
        \end{split}
    \end{align}
    Using the Banach--Alaoglu theorem and the Aubin--Lions--Simon lemma, we deduce the existence of functions $(\phi_\ast,\psi_\ast,\mu_\ast,\theta_\ast)$ such that
    \begin{alignat}{3}
        \scp{\delt\phi_{K_m}}{\delt\psi_{K_m}} &\rightarrow \scp{\delt\phi_\ast}{\delt\psi_\ast} \quad\quad&&\text{weakly in } L^2(0,T;(\mathcal{H}^1)^\prime), \label{CONV:DELT:KM:0}\\[0.3em]
        \scp{\phi_{K_m}}{\psi_{K_m}} &\rightarrow  \scp{\phi_\ast}{\psi_\ast} &&\text{weakly-star in } L^\infty(0,T;\mathcal{H}^1),\nonumber\\
        & &&\text{strongly in } C([0,T];\mathcal{H}^s) \text{ for all } s\in[0,1), \label{CONV:PP:KM:0}\\[0.3em]
        \scp{\mu_{K_m}}{\theta_{K_m}} &\rightarrow \scp{\mu_\ast}{\theta_\ast} &&\text{weakly in } L^2(0,T;\mathcal{H}^1),  
        \label{CONV:MTDIFF:KM:0}
    \end{alignat}
    as $m\rightarrow\infty$, along a non-relabeled subsequence. Additionally, due to \eqref{CONV:PP:KM:0} and the trace theorem, we infer
    \begin{align}
        \alpha\psi_{K_m}-\phi_{K_m} \rightarrow \alpha\psi_\ast-\phi_\ast \quad\quad\text{strongly in } C([0,T];L^2(\Ga))
    \end{align}
    as $m\rightarrow\infty$, which, in combination with \eqref{est:unif:pre:K->0}, already entails $\phi_\ast = \alpha\psi_\ast$ a.e.~on $\Sigma$, i.e., $\scp{\phi_\ast(t)}{\psi_\ast(t)}\in\mathcal{D}_\alpha$ for almost all $t\in[0,T]$. It is then clear that from the convergence properties \eqref{CONV:DELT:KM:0}--\eqref{CONV:MTDIFF:KM:0} the quadruplet $(\phi_\ast,\psi_\ast,\mu_\ast,\theta_\ast)$ has the desired regularity as stated in Definition~\ref{DEF:WS:REG}. \\[0.3em]
    Further, due to \eqref{CONV:PP:KM:0}, we infer that 
    \begin{align}
        \scp{\phi_{K_m}(0)}{\psi_{K_m}(0)}\rightarrow \scp{\phi_\ast(0)}{\psi_\ast(0)} \quad\text{strongly in } \mathcal{L}^2
    \end{align}
    as $m\rightarrow\infty$, and therefore $\phi_\ast(0) = \phi_0$ a.e.~in $\Om$ and $\psi_\ast(0) = \psi_0$ a.e.~on $\Ga$. Thus, Definition~\ref{DEF:WS:IC} is fulfilled. \\[0.3em]
    Regarding the weak formulation, we consider $\scp{\zeta}{\xi}\in\mathcal{H}^1$ and $\scp{\eta}{\vartheta}\in\mathcal{D}_\alpha$ as test functions, multiply the resulting equations with $\sigma\in C_c^\infty([0,T])$ and integrate in time from $0$ to $T$. We obtain
    \begin{subequations}\label{WF:KM:0}
        \begin{align}
        \begin{split}
            &\int_0^T\ang{\scp{\delt\phi_{K_m}}{\delt\psi_{K_m}}}{\scp{\zeta}{\xi}}_{\mathcal{H}^1}\,\sigma \dt - \int_0^T\intO \phi_{K_m}\boldsymbol{v}\cdot\Grad\zeta\,\sigma \dxt 
            \\
            &\quad - \int_0^T\intG \psi_{K_m}\boldsymbol{w}\cdot\Gradg\xi\,\sigma \dGt \\
            &= - \int_0^T\intO m_\Om(\phi_{K_m})\Grad\mu_{K_m}\cdot\Grad\zeta\,\sigma \dxt 
            \\
            &\quad - \int_0^T\intG  m_\Ga(\psi_{K_m})\Gradg\theta_{K_m}\cdot\Gradg\xi\,\sigma \dGt \label{WF:PP:KM:0}
            \\
            &\quad  - L^{-1}\int_0^T\intG(\beta\theta_{K_m}-\mu_{K_m})(\beta\xi - \zeta)\,\sigma\dGt,
        \end{split}
        \\
        \begin{split}
            &\int_0^T\intO \mu_{K_m}\,\eta\,\sigma\dxt + \int_0^T\intG\theta_{K_m}\,\vartheta\,\sigma\dGt 
            \\
            & =  \int_0^T\intO\Grad\phi_{K_m}\cdot\Grad\eta\,\sigma + F'(\phi_{K_m})\eta\,\sigma \dxt\label{WF:MT:KM:0} 
            \\
            &\quad + \int_0^T\intG\Gradg\psi_{K_m}\cdot\Gradg\vartheta\,\sigma + G'(\psi_{K_m})\vartheta\,\sigma \dGt .
        \end{split}
        \end{align}
    \end{subequations}
    Proceeding similarly as in the proof of Theorem~\ref{THEOREM:EOWS}, the convergences \eqref{CONV:DELT:KM:0}--\eqref{CONV:MTDIFF:KM:0} allow us to pass to the limit $m\rightarrow\infty$, and deduce that $(\phi_\ast,\psi_\ast,\mu_\ast,\theta_\ast)$ satisfies the desired weak formulation \eqref{WF}. Thus, Definition~\ref{DEF:WS:WF} is fulfilled. \\[0.3em]
    The mass conservation law follows by passing to the limit $m\rightarrow\infty$ in the mass conservation law for $\scp{\phi_{K_m}}{\psi_{K_m}}$. Alternatively, one can test the weak formulation \eqref{WF:PP} with $\scp{\beta}{1}\in \mathcal{H}^1$, integrate with respect to time from $0$ to $t$, and employ the fundamental theorem of calculus to infer \eqref{MCL}.
    We conclude that Definition~\ref{DEF:WS:MCL} is satisfied. \\[0.3em]
    As the boundary term involving $K_m$ in the energy is non-negative, we note that
    \begin{align}
        E_0(\phi_{K_m}(t),\psi_{K_m}(t)) \leq E_{K_m}(\phi_{K_m}(t),\psi_{K_m}(t))
    \end{align}
    for all $t\in[0,T]$, and thus
    \begin{align}
        \label{IEQ:EKM}
        \begin{split}
            \int_0^T E_0(\phi_\ast(t),\psi_\ast(t))\,\sigma(t)\dt &\leq \liminf_{m\rightarrow\infty} \int_0^T E_0(\phi_{K_m}(t),\psi_{K_m}(t))\,\sigma(t)\dt 
            \\
            &\leq \liminf_{m\rightarrow\infty} \int_0^T E_{K_m}(\phi_{K_m}(t),\psi_{K_m}(t))\,\sigma(t)\dt
        \end{split}
    \end{align}
    for all non-negative test functions $\sigma\in C_c^\infty(0,T)$. Here, the first inequality follows by proceeding similarly as in the proof of Theorem~\ref{THEOREM:EOWS}. 
    We now use the energy inequality \eqref{WEDL} written for $(\phi_{K_m},\psi_{K_m},\mu_{K_m},\theta_{K_m})$ to further bound the right-hand side of \eqref{IEQ:EKM}.
    Using lower semicontinuity arguments (similar to those in the proof of Theorem~\ref{THEOREM:EOWS}), and recalling \eqref{EQ:EKM=E0},    
    passing to the limit $m\to\infty$ leads to the corresponding energy inequality for $(\phi_\ast,\psi_\ast,\mu_\ast,\theta_\ast)$. \\[0.3em]
    This proves that the quadruplet $(\phi_\ast,\psi_\ast,\mu_\ast,\theta_\ast)$ is a weak solution of \eqref{EQ:SYSTEM} in the sense of Definition~\ref{DEF:WS} with $K=0$.
\end{proof}

\subsection{The limit \texorpdfstring{$K\rightarrow \infty$}{K -> inf} and the existence of a weak solution if \texorpdfstring{$(K,L)\in\{\infty\}\times(0,\infty)$}{K = inf}}

\begin{theorem}\textnormal{(Asymptotic  limit $K\rightarrow\infty$)}\label{THEOREM:K->inf}
    Suppose that the assumptions \ref{ASSUMP:1}--\ref{ASSUMP:POTENTIALS:2} hold, let $\scp{\phi_0}{\psi_0}\in\mathcal{H}^1$ be arbitrary initial data, $L\in(0,\infty)$, $\boldsymbol{v}\in L^2(0,T;\mathbf{L}_\Div^3(\Om))$ and $\boldsymbol{w}\in L^2(0,T;\mathbf{L}^3_\tau(\Ga))$. For any $K\in(0,\infty)$,  let $(\phi_K,\psi_K,\mu_K,\theta_K)$ denote a weak solution of the system \eqref{EQ:SYSTEM} in the sense of Definition~\ref{DEF:WS} with initial data $\scp{\phi_0}{\psi_0}$. Then, there exists a quadruplet $(\phi^\ast,\psi^\ast,\mu^\ast,\theta^\ast)$ such that
    \begin{alignat*}{3}
        \scp{\delt\phi^K}{\delt\psi^K} &\rightarrow \scp{\delt\phi^\ast}{\delt\psi^\ast} \quad\quad&&\text{weakly in } L^2(0,T;(\mathcal{H}^1)'), \\[0.3em]
        \scp{\phi^K}{\psi^K} &\rightarrow  \scp{\phi^\ast}{\psi^\ast} &&\text{weakly-star in } L^\infty(0,T;\mathcal{H}^1),\nonumber\\
        & &&\text{strongly in } C([0,T];\mathcal{H}^s) \text{ for all } s\in[0,1),\\[0.3em]
        \scp{\mu^K}{\theta^K} &\rightarrow \scp{\mu^\ast}{\theta^\ast} &&\text{weakly in } L^4(0,T;\mathcal{L}^2)\cap L^2(0,T;\mathcal{H}^1), 
        \\[0.3em]
        \frac{1}{K}\left(\alpha\psi^K - \phi^K\right) &\rightarrow 0 &&\text{strongly in } L^\infty(0,T,L^2(\Ga)),
    \end{alignat*}
    as $K\rightarrow\infty$, up to subsequence extraction, with
    \begin{align}
        \label{EST:DNPHIKM}
        \frac{1}{K}\norm{\alpha\psi^K-\phi^K}_{L^\infty(0,T;L^2(\Gamma))} \leq \frac{C}{\sqrt{K}},
    \end{align}
    and the limit $(\phi^\ast,\psi^\ast,\mu^\ast,\theta^\ast)$ is a weak solution to the system \eqref{EQ:SYSTEM} in the sense of Definition~\ref{DEF:WS} with $K=\infty$, which additionally satisfies $\scp{\mu^\ast}{\theta^\ast}\in L^4(0,T;\mathcal{L}^2)$.
\end{theorem}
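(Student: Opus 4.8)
I would follow the scheme of the proof of Theorem~\ref{THEOREM:K->0}, adapted to the limit $K\to\infty$. First I fix an arbitrary sequence $(K_m)_{m\in\N}\subset(0,\infty)$ with $K_m\to\infty$ and, without loss of generality, $K_m\geq 1$, together with a sequence of weak solutions $(\phi_{K_m},\psi_{K_m},\mu_{K_m},\theta_{K_m})$ with initial data $\scp{\phi_0}{\psi_0}$. In contrast to the case $K\to 0$, the spaces of test functions do not degenerate here, since $\mathcal{H}^1_{K_m,\alpha}=\mathcal{H}^1=\mathcal{H}^1_{\infty,\alpha}$ for every $m$ (and $\mathcal{H}^1_{L,\beta}=\mathcal{H}^1$ because $L\in(0,\infty)$ is fixed), which will make the passage to the limit in the weak formulation straightforward.

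The first main step is to derive uniform estimates. Because $\scp{\phi_0}{\psi_0}\in\mathcal{H}^1$, the quantity $E_{K_m}(\phi_0,\psi_0)=E_\infty(\phi_0,\psi_0)+\tfrac{1}{2K_m}\norm{\alpha\psi_0-\phi_0}_{L^2(\Ga)}^2\leq E_1(\phi_0,\psi_0)$ is finite and bounded uniformly in $m$. As in the proof of Theorem~\ref{THEOREM:K->0}, I would use the growth bounds \ref{ASSUMP:POTENTIALS:2} to control $\norm{\scp{\phi_{K_m}}{\psi_{K_m}}}_{\mathcal{H}^1}^2$ by the bulk and surface parts of the energy (the bulk-surface Poincar\'{e} inequality \ref{PRELIM:POINCINEQ} is not available, as the behaviour of its constant as $K\to\infty$ is not known), insert the energy inequality \eqref{WEDL}, bound the convective terms exactly as in \eqref{EST:convective-terms}, and apply Gronwall's lemma. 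This gives boundedness of $\scp{\phi_{K_m}}{\psi_{K_m}}$ in $L^\infty(0,T;\mathcal{H}^1)$, the bound $K_m^{-1/2}\norm{\alpha\psi_{K_m}-\phi_{K_m}}_{L^\infty(0,T;L^2(\Ga))}\leq C$ (which, writing $K_m^{-1}=K_m^{-1/2}\cdot K_m^{-1/2}$, immediately yields \eqref{EST:DNPHIKM} and hence the strong convergence of $\tfrac{1}{K_m}(\alpha\psi_{K_m}-\phi_{K_m})$ to $0$), boundedness of $\scp{\Grad\mu_{K_m}}{\Gradg\theta_{K_m}}$ in $L^2(0,T;\mathcal{L}^2)$, and of $\beta\theta_{K_m}-\mu_{K_m}$ in $L^2(0,T;L^2(\Ga))$. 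Next, testing \eqref{WF:MT} with an arbitrary $\scp{\eta}{\vartheta}\in\mathcal{H}^1$ and observing that the coupling term $K_m^{-1}\intG(\alpha\psi_{K_m}-\phi_{K_m})(\alpha\vartheta-\eta)\dG$ is controlled by $C K_m^{-1/2}\norm{\scp{\eta}{\vartheta}}_{\mathcal{H}^1}$, I obtain $\norm{\scp{\mu_{K_m}}{\theta_{K_m}}}_{L^\infty(0,T;(\mathcal{H}^1)')}\leq C$ as in the proof of Theorem~\ref{thm:existence}, and then, combining this with the gradient bound via Lemma~\ref{LEMMA:DUALITY} (applied with $K=\infty$), boundedness of $\scp{\mu_{K_m}}{\theta_{K_m}}$ in $L^4(0,T;\mathcal{L}^2)\cap L^2(0,T;\mathcal{H}^1)$. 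Finally, the bound on $\delt\scp{\phi_{K_m}}{\psi_{K_m}}$ in $L^2(0,T;(\mathcal{H}^1)')$ follows as in the derivation of \eqref{EST:delt-uniform}.

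With these estimates in hand, the Banach--Alaoglu theorem and the Aubin--Lions--Simon lemma yield, along a subsequence, the convergences claimed in the theorem for some limit $(\phi^\ast,\psi^\ast,\mu^\ast,\theta^\ast)$; in particular the strong convergence of $\scp{\phi_{K_m}}{\psi_{K_m}}$ in $C([0,T];\mathcal{H}^s)$ for $s\in[0,1)$ gives a.e.\ convergence in $Q$ and on $\Sigma$ and, by the trace theorem, strong convergence of $\alpha\psi_{K_m}-\phi_{K_m}$ in $C([0,T];L^2(\Ga))$. The convergences of $F'(\phi_{K_m})$, $G'(\psi_{K_m})$, the mobility products $m_\Om(\phi_{K_m})\Grad\mu_{K_m}$ and $m_\Ga(\psi_{K_m})\Gradg\theta_{K_m}$, and the convective products $\phi_{K_m}\boldsymbol{v}\cdot\Grad\mu_{K_m}$ and $\psi_{K_m}\boldsymbol{w}\cdot\Gradg\theta_{K_m}$ are then obtained exactly as in the proof of Theorem~\ref{thm:existence}. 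Multiplying \eqref{WF} (tested with $\scp{\zeta}{\xi},\scp{\eta}{\vartheta}\in\mathcal{H}^1$) by $\sigma\in C_c^\infty([0,T])$, integrating in time, and passing to the limit $m\to\infty$, the only term requiring attention is $K_m^{-1}\intG(\alpha\psi_{K_m}-\phi_{K_m})(\alpha\vartheta-\eta)\,\sigma\dG$, which is $O(K_m^{-1/2})$ and therefore vanishes; thus $(\phi^\ast,\psi^\ast,\mu^\ast,\theta^\ast)$ satisfies \eqref{WF} with $\h(K)=\h(\infty)=0$. The initial conditions, the mass conservation law, and the regularity \eqref{REGPP}--\eqref{REGMT} together with $\scp{\mu^\ast}{\theta^\ast}\in L^4(0,T;\mathcal{L}^2)$ follow precisely as in the proof of Theorem~\ref{THEOREM:K->0}. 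For the energy inequality I would use that the boundary term in $E_{K_m}$ is non-negative, so $E_\infty(\phi_{K_m}(t),\psi_{K_m}(t))\leq E_{K_m}(\phi_{K_m}(t),\psi_{K_m}(t))$, together with $E_{K_m}(\phi_0,\psi_0)\to E_\infty(\phi_0,\psi_0)$, and then the same lower-semicontinuity arguments as in Theorem~\ref{thm:existence}, finishing by upgrading from a.e.\ $t$ to all $t\in[0,T]$.

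The main obstacle --- which is in fact rather mild --- is to control the boundary coupling term $K_m^{-1}\intG(\alpha\psi_{K_m}-\phi_{K_m})(\,\cdot\,)\dG$ uniformly in the estimates above and to show that it disappears in the limit. The key is the factorization $K_m^{-1}=K_m^{-1/2}\cdot K_m^{-1/2}$: one factor $K_m^{-1/2}$ is absorbed into the energy-controlled quantity $K_m^{-1/2}\norm{\alpha\psi_{K_m}-\phi_{K_m}}_{L^2(\Ga)}$, while the remaining factor $K_m^{-1/2}$ supplies both the uniform bound and the decay to zero as $m\to\infty$. Since \ref{ASSUMP:POTENTIALS:2} is used instead of the bulk-surface Poincar\'{e} inequality, the loss of quantitative control of the Poincar\'{e} constant as $K\to\infty$ causes no trouble.
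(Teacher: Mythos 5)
Your proposal is correct and follows essentially the same route as the paper's proof: uniform energy bounds via \ref{ASSUMP:POTENTIALS:2} and Gronwall (bypassing the bulk-surface Poincar\'e inequality), the dual estimate on $\scp{\mu_{K_m}}{\theta_{K_m}}$ combined with Lemma~\ref{LEMMA:DUALITY}, compactness via Banach--Alaoglu and Aubin--Lions--Simon, and the factorization $K_m^{-1}=K_m^{-1/2}\cdot K_m^{-1/2}$ to kill the coupling term and obtain \eqref{EST:DNPHIKM}. The treatment of the initial energy, the passage to the limit in the weak formulation, and the energy inequality via $E_\infty\leq E_{K_m}$ all match the paper's argument.
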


\begin{remark}
    Suppose that for any $K\in(0,\infty)$, the phase-field $\phi^K$ is sufficiently regular such that the boundary condition \eqref{EQ:SYSTEM:5} holds in the strong sense, that is
        \begin{align*}
        K \deln \phi^K = \alpha \psi^K - \phi^K
        \quad
        \text{a.e.~on $\Sigma$}.
    \end{align*}    
    This is actually fulfilled under additional assumptions on the regularity of $\Gamma$ and the parameter $p$ (if $d=3$), see Theorem~\ref{THEOREM:REG}.
    Then, estimate \eqref{EST:DNPHIKM} can be reformulated as
    \begin{align*}
        \norm{\deln\phi^K}_{L^2(\Sigma)} \leq \frac{C}{\sqrt{K}}.
    \end{align*}
    As the right-hand side of this inequality tends to zero as $K\to\infty$, this explains why the homogeneous Neumann boundary condition $\deln \phi^* = 0$ a.e.~on $\Sigma$ appears in the limit model corresponding to $K=\infty$.
\end{remark}

\begin{proof}[Proof of Theorem~\ref{THEOREM:K->inf}]
    We consider an arbitrary sequence $(K_m)_{m\in\N}\subset(0,\infty)$ such that $K_m\rightarrow\infty$ as $m\rightarrow\infty$. Without loss of generality, we assume that $K_m\in[1,\infty)$ for all $m\in\N$. For any $m\in\N$, let $(\phi^{K_m},\psi^{K_m},\mu^{K_m},\theta^{K_m})$ denote a weak solution of the system \eqref{EQ:SYSTEM} in the sense of Definition~\ref{DEF:WS} with initial data $\scp{\phi_0}{\psi_0}$ corresponding to the parameter $K_m$.
    In this proof, we use the letter $C$ to denote generic positive constants independent of $K_m$ and $m$. Let now $m\in\N$ be arbitrary.
    
    As we have seen in the proof of Theorem~\ref{THEOREM:EOWS} (see \ref{EST:initial-energy}), the initial energy satisfies
    \begin{align}
        E_{K_m}(\phi^{K_m}(0),\psi^{K_m}(0)) \leq C(1+K_m^{-1}) \leq C
    \end{align}
    since $K_m \geq 1$ for all $m\in\N$. This allows us to use the same argumentation as in the proof of Theorem~\ref{THEOREM:K->0}, and we infer
    \begin{align}
        \begin{split}
        &\norm{\scp{\phi^{K_m}}{\psi^{K_m}}}_{L^\infty(0,T;\mathcal{H}^1)} + K_m^{-1/2}\norm{\alpha\psi^{K_m}-\phi^{K_m}}_{L^\infty(0,T;L^2(\Ga))}  \label{jonas5} \\
        &\quad + \norm{\scp{\Grad\mu^{K_m}}{\Gradg\theta^{K_m}}}_{L^2(0,T;\mathcal{L}^2)} + \norm{\beta\theta^{K_m}-\mu^{K_m}}_{L^2(0,T;L^2(\Ga))} \leq C.
        \end{split}
    \end{align}
    This already implies \eqref{EST:DNPHIKM}.
    We now test the weak formulation \eqref{WF:MT} with $\scp{\zeta}{\xi}\in\mathcal{H}^1$. Using again that $K_m \geq 1$, we find that
    \begin{align}\label{jonas7}
        \norm{\scp{\mu^{K_m}}{\theta^{K_m}}}_{L^\infty(0,T;(\mathcal{H}^1)^\prime)} \leq C.
    \end{align}
    This estimate allows us to apply Lemma~\ref{LEMMA:DUALITY}, and using \eqref{jonas5} and \eqref{jonas7}, we conclude 
    \begin{align}\label{EST:FULLMT:KM:INF}
        \norm{\scp{\mu^{K_m}}{\theta^{K_m}}}_{L^4(0,T;\mathcal{L}^2)} \leq C.
    \end{align}
    In combination with \eqref{jonas5} we thus deduce
    \begin{align}\label{EST:MT:FULL:KM:INF}
        \norm{\scp{\mu^{K_m}}{\theta^{K_m}}}_{L^2(0,T;\mathcal{H}^1)} \leq C.
    \end{align}
    Lastly, exploiting \eqref{jonas5}, we conclude from the weak formulation \eqref{WF:PP} by a comparison argument that
    \begin{align}\label{EST:DELT:KM:INF}
        \norm{\scp{\delt\phi^{K_m}}{\delt\psi^{K_m}}}_{L^2(0,T;(\mathcal{H}^1)^\prime)} \leq C.
    \end{align}
    In summary, combining \eqref{jonas5} and \eqref{EST:FULLMT:KM:INF}--\eqref{EST:DELT:KM:INF}, we thus have shown that
    \begin{align}\label{EST:UNIF:KM:INF}
        \begin{split}
            &\norm{\scp{\delt\phi^{K_m}}{\delt\psi^{K_m}}}_{L^2(0,T;(\mathcal{H}^1)^\prime)} 
            + \norm{\scp{\phi^{K_m}}{\psi^{K_m}}}_{L^\infty(0,T;\mathcal{H}^1)} 
            \\
            &\quad  
            + \norm{\scp{\mu^{K_m}}{\theta^{K_m}}}_{L^4(0,T;\mathcal{L}^2)}
            + \norm{\scp{\mu^{K_m}}{\theta^{K_m}}}_{L^2(0,T;\mathcal{H}^1)}
            \leq C.
        \end{split}
    \end{align}
    In view of the uniform estimate \eqref{EST:UNIF:KM:INF}, the Banach--Alaoglu theorem and the Aubin--Lions--Simon lemma imply the existence of functions $(\phi^\ast,\psi^\ast,\mu^\ast,\theta^\ast)$ such that
    \begin{alignat}{3}
        \scp{\delt\phi^{K_m}}{\delt\psi^{K_m}} &\rightarrow \scp{\delt\phi^\ast}{\delt\psi^\ast} \quad\quad&&\text{weakly in } L^2(0,T;(\mathcal{H}^1)'), \label{CONV:DELT:KM:INF}
        \\[0.3em]
        \scp{\phi^{K_m}}{\psi^{K_m}} &\rightarrow  \scp{\phi^\ast}{\psi^\ast} &&\text{weakly-star in } L^\infty(0,T;\mathcal{H}^1),\nonumber\\
        & &&\text{strongly in } C([0,T];\mathcal{H}^s) \text{ for all } s\in[0,1), \label{CONV:PP:KM:INF}
        \\[0.3em]
        \scp{\mu^{K_m}}{\theta^{K_m}} &\rightarrow \scp{\mu^\ast}{\theta^\ast} &&\text{weakly in } L^4(0,T;\mathcal{L}^2)\cap L^2(0,T;\mathcal{H}^1), 
        \label{CONV:MTDIFF:KM:INF}
    \end{alignat}
    as $m\rightarrow\infty$, along a non-relabeled subsequence. Additionally, thanks to \eqref{jonas5} and the trace theorem, we infer
    \begin{align}\label{CONV:PPB:KM:INF}
        \frac{1}{K_m}(\alpha\psi^{K_m} - \phi^{K_m})\rightarrow 0 \quad\text{strongly in } L^\infty(0,T;L^2(\Ga))
    \end{align}
    as $m\rightarrow\infty$. We readily deduce that Definition~\ref{DEF:WS:REG} is satisfied. 
    
    Using the Sobolev embedding $\mathcal{H}^s\emb\mathcal{L}^2$ for $s\in (0,1)$ along with \eqref{CONV:PP:KM:INF}, we find that
    \begin{align}
        \scp{\phi^{K_m}(0)}{\psi^{K_m}(0)}\rightarrow \scp{\phi^\ast(0)}{\psi^\ast(0)}\quad\text{strongly in } \mathcal{L}^2,
    \end{align}
    as $m\rightarrow\infty$. In view of the initial conditions satisfied by $(\phi^{K_m},\psi^{K_m})$, we deduce that Definition~\ref{DEF:WS:IC} is fulfilled. 

    As in the proof of Theorem~\ref{THEOREM:K->0}, the convergences \eqref{CONV:DELT:KM:INF}--\eqref{CONV:PPB:KM:INF} are sufficient to pass to the limit in the weak formulations \eqref{WF} associated with $K_m$ to conclude that the limit $(\phi^*,\psi^*,\mu^*,\theta^*)$ fulfils the weak formulation \eqref{WF} associated with $K=\infty$.
    Thus, we infer that Definition~\ref{DEF:WS:WF} is satisfied. 
    
    The mass conservation law as stated in Definition~\ref{DEF:WS:MCL} also follows with the same reasoning as in the proof of Theorem~\ref{THEOREM:K->0}.
    
    For the energy inequality, note that for all non-negative $\sigma\in C_c^\infty(0,T)$, we have
    \begin{align}
        \begin{split}
            \int_0^T E_\infty(\phi^\ast(t),\psi^\ast(t))\,\sigma(t)\dt &\leq \liminf_{m\rightarrow\infty}\int_0^T E_\infty(\phi^{K_m}(t),\psi^{K_m}(t))\,\sigma(t)\dt \\
            &\le \liminf_{m\rightarrow\infty} \int_0^T E_{K_m}(\phi^{K_m}(t),\psi^{K_m}(t))\,\sigma(t)\dt.
        \end{split}
    \end{align}
    The first inequality follows the same argumentation that we already have seen before, whereas the second inequality is due to \eqref{CONV:PPB:KM:INF}. We finish the proof by mimicking the proof of the energy inequality in Theorem~\ref{THEOREM:EOWS}, which is based on the convergence results \eqref{CONV:DELT:KM:INF}--\eqref{CONV:MTDIFF:KM:INF} and the assumptions \ref{ASSUMP:MOBILITY}--\ref{ASSUMP:POTENTIALS:1}. 
    
    We thus have shown that the quadruplet $(\phi^\ast,\psi^\ast,\mu^\ast,\theta^\ast)$ is a weak solution to the system \eqref{EQ:SYSTEM} in the sense of Definition~\ref{DEF:WS} for $K=\infty$.
\end{proof}

\subsection{The limit \texorpdfstring{$L\rightarrow 0$}{L -> 0} and the existence of a weak solution if \texorpdfstring{$(K,L)\in[0,\infty]\times\{0\}$}{L = 0}}

In this section, we investigate the asymptotic limits $L\rightarrow 0$ and $L\rightarrow\infty$ of the system \eqref{EQ:SYSTEM} for fixed $K\in[0,\infty]$. 

\begin{theorem}\textnormal{(Asymptotic  limit $L\rightarrow 0$)}\label{THEOREM:L->0}
    Suppose that the assumptions \ref{ASSUMP:1}--\ref{ASSUMP:POTENTIALS:1}
    hold, let $\scp{\phi_0}{\psi_0}\in\mathcal{H}^1_{K,\alpha}$ be arbitrary initial data for $K\in[0,\infty]$, $\boldsymbol{v}\in L^2(0,T;\mathbf{L}_\Div^3(\Om))$ and $\boldsymbol{w}\in L^2(0,T;\mathbf{L}^3_\tau(\Ga))$. 
    In the case $K\in \{0,\infty\}$, we further assume that \ref{ASSUMP:POTENTIALS:2} holds.
    For any $L\in(0,\infty)$, let $(\phi_L,\psi_L,\mu_L,\theta_L)$ denote a weak solution of the system \eqref{EQ:SYSTEM} in the sense of Definition~\ref{DEF:WS} with initial data $\scp{\phi_0}{\psi_0}$. Then, there exists a quadruplet $(\phi_\ast,\psi_\ast,\mu_\ast,\theta_\ast)$ with $\mu_\ast = \beta\theta_\ast$ a.e.~on $\Sigma$ such that
    \begin{alignat*}{3}
        \scp{\delt\phi_L}{\delt\psi_L} &\rightarrow \scp{\delt\phi_\ast}{\delt\psi_\ast} \quad\quad&&\text{weakly in } L^2(0,T;\mathcal{D}_\beta^\prime), \\[0.3em]
        \scp{\phi_L}{\psi_L} &\rightarrow  \scp{\phi_\ast}{\psi_\ast} &&\text{weakly-star in } L^\infty(0,T;\mathcal{H}^1_{K,\alpha}),\nonumber\\
        & &&\text{strongly in } C([0,T];\mathcal{H}^s) \text{ for all } s\in[0,1),  \\[0.3em]
        \scp{\mu_L}{\theta_L} &\rightarrow \scp{\mu_\ast}{\theta_\ast} &&\text{weakly in } L^2(0,T;\mathcal{H}^1),  \nonumber \\
        & &&\text{weakly in } L^4(0,T;\mathcal{L}^2) \ \text{if } K\in(0,\infty], \\[0.3em]
        \beta\theta_L - \mu_L &\rightarrow 0 &&\text{strongly in } L^2(0,T;L^2(\Ga)), 
    \end{alignat*}
    as $L\rightarrow 0$, up to subsequence extraction, with
    \begin{align}
    \label{EST:L->0}
        \norm{\beta\theta_L-\mu_L}_{L^2(\Sigma)} \leq C\sqrt{L},
    \end{align}
    and the quadruplet $(\phi_\ast,\psi_\ast,\mu_\ast,\theta_\ast)$ is a weak solution to the system \eqref{EQ:SYSTEM} in the sense of Definition~\ref{DEF:WS} with $L=0$, which additionally satisfies $\scp{\mu_\ast}{\theta_\ast}\in L^4(0,T;\mathcal{L}^2)$ in the case $K\in(0,\infty]$.
\end{theorem}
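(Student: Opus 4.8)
The plan is to mimic the proof of Theorem~\ref{THEOREM:K->0}. Fix an arbitrary sequence $(L_m)_{m\in\N}\subset(0,\infty)$ with $L_m\to 0$ and, for each $m$, a weak solution $(\phi_{L_m},\psi_{L_m},\mu_{L_m},\theta_{L_m})$ of \eqref{EQ:SYSTEM} (with parameters $K$ and $L_m$) corresponding to the initial datum $(\phi_0,\psi_0)$. After establishing $L_m$-uniform a priori bounds, one extracts a convergent subsequence, identifies the limit $(\phi_\ast,\psi_\ast,\mu_\ast,\theta_\ast)$, and checks that it is a weak solution for $L=0$ by passing to the limit in the weak formulation \eqref{WF}, the mass conservation law \eqref{MCL} and the energy inequality \eqref{WEDL}.

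\textbf{Uniform estimates.} Starting from the energy inequality \eqref{WEDL} for $(\phi_{L_m},\psi_{L_m},\mu_{L_m},\theta_{L_m})$, I would bound the convective terms exactly as in \eqref{EST:convective-terms}, absorb the gradient contributions of $\mu_{L_m}$ and $\theta_{L_m}$ into the dissipation (using the lower bounds on the mobilities, \ref{ASSUMP:MOBILITY}), and apply Gronwall's lemma. To convert the resulting energy bound into an $\mathcal{H}^1$ bound on $(\phi_{L_m},\psi_{L_m})$, I distinguish two cases: if $K\in(0,\infty)$, this follows from the bulk-surface Poincar\'e inequality \ref{PRELIM:POINCINEQ} (whose constant depends only on the fixed $K$, not on $L_m$) together with the mass conservation law; if $K\in\{0,\infty\}$, I use instead the coercivity assumption \ref{ASSUMP:POTENTIALS:2}, exactly as in the proof of Theorem~\ref{THEOREM:K->0}. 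Altogether this gives
\begin{align*}
    &\norm{\scp{\phi_{L_m}}{\psi_{L_m}}}_{L^\infty(0,T;\mathcal{H}^1_{K,\alpha})} + \norm{\scp{\Grad\mu_{L_m}}{\Gradg\theta_{L_m}}}_{L^2(0,T;\mathcal{L}^2)} \\
    &\qquad + L_m^{-1/2}\norm{\beta\theta_{L_m}-\mu_{L_m}}_{L^2(0,T;L^2(\Ga))} \leq C,
\end{align*}
which in particular yields \eqref{EST:L->0} and the strong convergence $\beta\theta_{L_m}-\mu_{L_m}\to 0$ in $L^2(\Sigma)$. For the full $\mathcal{H}^1$-norm of $(\mu_{L_m},\theta_{L_m})$ I would again split cases. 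If $K\in(0,\infty]$, the test space in \eqref{WF:MT} is all of $\mathcal{H}^1=\mathcal{H}^1_{K,\alpha}$, so using the growth conditions \eqref{GC:F'}--\eqref{GC:G'} and the bound just obtained I get $\norm{\scp{\mu_{L_m}}{\theta_{L_m}}}_{L^\infty(0,T;(\mathcal{H}^1)')}\leq C$, and Lemma~\ref{LEMMA:DUALITY} together with the gradient bound then upgrades this to bounds in $L^2(0,T;\mathcal{H}^1)$ and $L^4(0,T;\mathcal{L}^2)$. If $K=0$, the admissible test functions are only those in $\mathcal{D}_\alpha$; here I would recover the generalized means $\meano{\mu_{L_m}}$ and $\meang{\theta_{L_m}}$ by testing \eqref{WF:MT} with the constant pair $(\alpha,1)\in\mathcal{D}_\alpha$ (so that the gradient and $\h(K)$-terms drop), then combine the resulting identity with the Poincar\'e--Wirtinger inequality, the trace inequality and the bound on $\norm{\beta\theta_{L_m}-\mu_{L_m}}_{L^2(\Ga)}$; the non-degeneracy condition $\alpha\beta\abs{\Om}+\abs{\Ga}\neq 0$ from \ref{ASSUMP:2} makes the linear system for the two means invertible, which delivers the bound in $L^2(0,T;\mathcal{H}^1)$. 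Finally, testing \eqref{WF:PP} only with pairs $(\zeta,\xi)\in\mathcal{D}_\beta$ makes the term carrying $\h(L_m)=L_m^{-1}$ vanish (since $\beta\xi-\zeta=0$ a.e.~on $\Ga$), so a comparison argument as in \eqref{EST:delt-uniform} yields an $L_m$-uniform bound on $(\delt\phi_{L_m},\delt\psi_{L_m})$ in $L^2(0,T;\mathcal{D}_\beta')$.

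\textbf{Passage to the limit.} By Banach--Alaoglu and the Aubin--Lions--Simon lemma I extract a (non-relabeled) subsequence converging in the modes listed in the statement; in particular $(\phi_{L_m},\psi_{L_m})\to(\phi_\ast,\psi_\ast)$ strongly in $C([0,T];\mathcal{H}^s)$ for every $s\in[0,1)$, and since $\mathcal{H}^1_{K,\alpha}$ is a closed (hence weakly closed) subspace of $\mathcal{H}^1$ the limit satisfies $\scp{\phi_\ast}{\psi_\ast}\in L^\infty(0,T;\mathcal{H}^1_{K,\alpha})$. The identity $\mu_\ast=\beta\theta_\ast$ a.e.~on $\Sigma$ follows from $\beta\theta_{L_m}-\mu_{L_m}\wto\beta\theta_\ast-\mu_\ast$ in $L^2(\Sigma)$ combined with \eqref{EST:L->0}, so that $\scp{\mu_\ast}{\theta_\ast}\in L^2(0,T;\mathcal{D}_\beta)=L^2(0,T;\mathcal{H}^1_{0,\beta})$, as required for a weak solution with $L=0$. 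To pass to the limit in the weak formulation I multiply \eqref{WF:PP} (tested with $(\zeta,\xi)\in\mathcal{D}_\beta$) and \eqref{WF:MT} (tested with $(\eta,\vartheta)\in\mathcal{H}^1_{K,\alpha}$) by $\sigma\in C_c^\infty([0,T])$, integrate in time, and argue as in the proofs of Theorem~\ref{thm:existence} and Theorem~\ref{THEOREM:K->0}: the strong convergence of $\phi_{L_m},\psi_{L_m}$, together with $m_\Om(\phi_{L_m})\to m_\Om(\phi_\ast)$, $m_\Ga(\psi_{L_m})\to m_\Ga(\psi_\ast)$, $F'(\phi_{L_m})\wto F'(\phi_\ast)$ in $L^{6/5}(Q)$, $G'(\psi_{L_m})\to G'(\psi_\ast)$ in $L^2(\Sigma)$, and the weak-strong convergence of the convective and mobility flux terms, let me pass to the limit; the $\h(L_m)$-term in \eqref{WF:PP} has already disappeared by the choice $\mathcal{D}_\beta$, while the $\h(K)$-term in \eqref{WF:MT} is unaffected because $K$ is fixed. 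The initial conditions follow from the $C([0,T];\mathcal{L}^2)$ convergence, and the mass conservation law by passing to the limit in \eqref{MCL} for $(\phi_{L_m},\psi_{L_m})$. For the energy inequality I combine weak lower semicontinuity of the $E_K$-part (of the Dirichlet norms and, via Fatou, of the potential integrals), Fatou's lemma and weak lower semicontinuity applied to $\sqrt{m_\Om(\phi_{L_m})}\Grad\mu_{L_m}$ and $\sqrt{m_\Ga(\psi_{L_m})}\Gradg\theta_{L_m}$ for the dissipation integrals, convergence of the convective terms, and simply discard the nonnegative $\h(L_m)$-term on the left-hand side (which coincides with $\h(0)=0$ in the limit model); note that $E_{L_m}=E_K$ does not depend on $L$, so the right-hand side $E_K(\phi_0,\psi_0)$ is the same for every $m$.

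\textbf{Main obstacle.} The delicate part is the $L_m$-uniform $\mathcal{H}^1$-bound on the chemical potentials when $K=0$: there the admissible test functions in \eqref{WF:MT} only form the strict subspace $\mathcal{D}_\alpha$ and the bulk-surface Poincar\'e inequality cannot be applied directly to $(\mu_{L_m},\theta_{L_m})$, so one must recover the two generalized means from constant test functions and invoke $\alpha\beta\abs{\Om}+\abs{\Ga}\neq 0$; correspondingly, only the weaker bound $\scp{\mu_\ast}{\theta_\ast}\in L^2(0,T;\mathcal{L}^2)$ (rather than $L^4(0,T;\mathcal{L}^2)$) survives, which is why the extra regularity $\scp{\mu_\ast}{\theta_\ast}\in L^4(0,T;\mathcal{L}^2)$ is claimed only for $K\in(0,\infty]$. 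A second point that requires care is the bookkeeping of which subspace annihilates which boundary term: testing \eqref{WF:PP} with $\mathcal{D}_\beta$ is precisely what removes the $L_m$-singular term and makes the time-derivative estimate uniform, and it is also what produces the limiting weak formulation for $L=0$.
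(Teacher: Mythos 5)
Your proposal is correct and follows essentially the same route as the paper: uniform bounds from the energy inequality (with the case distinction on $K$ for converting the $\norm{\cdot}_{K,\alpha}$-bound into a full $\mathcal{H}^1$-bound), killing the singular $\h(L_m)$-term by testing \eqref{WF:PP} only with pairs in $\mathcal{D}_\beta$, compactness via Banach--Alaoglu and Aubin--Lions--Simon, identification of $\mu_\ast=\beta\theta_\ast$ from the $\sqrt{L_m}$-estimate, and the standard limit passage in the weak formulation, mass conservation and energy inequality. The only genuine deviation is the sub-case $K=0$ for the chemical potentials: the paper bounds $\scp{\mu_{L_m}}{\theta_{L_m}}$ in $L^\infty(0,T;\mathcal{D}_\alpha')$ and invokes Lemma~\ref{LEMMA:DUALITY} (as in the proof of Theorem~\ref{THEOREM:K->0}), whereas you recover the two generalized means from the constant test pair $\scp{\alpha}{1}\in\mathcal{D}_\alpha$ together with the trace/Poincar\'e--Wirtinger inequalities and the bound on $\beta\theta_{L_m}-\mu_{L_m}$, using $\alpha\beta\abs{\Om}+\abs{\Ga}\neq 0$ to invert the resulting $2\times2$ system; this is a more explicit but equally valid way to reach the same $L^2(0,T;\mathcal{H}^1)$ bound.
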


\begin{remark}
    As the right-hand side in \eqref{EST:L->0} tends to zero as $L\to 0$, this explains why the Dirichlet type boundary condition $\mu_* = \beta\theta_*$ a.e.~on $\Sigma$ appears in the limit model corresponding to $L=0$.
\end{remark}

\begin{proof}[Proof of Theorem~\ref{THEOREM:L->0}]
    We consider an arbitrary sequence $(L_m)_{m\in\N}\subset(0,\infty)$ such that $L_m\rightarrow\infty$ as $m\rightarrow\infty$ and a corresponding weak solution $(\phi_{L_m},\psi_{L_m},\theta_{L_m},\mu_{L_m})$ to the system \eqref{EQ:SYSTEM} in the sense of Definition~\ref{DEF:WS} to the initial data $\scp{\phi_0}{\psi_0}$.
    In this proof, we denote by $C$ arbitrary positive constants independent of $L_m$ and $m$, which may change their value from line to line. Let now $m\in\N$ be arbitrary.
    
    If $K\in[0,\infty)$, we conclude from the energy inequality \eqref{WEDL} that
    \begin{align}
        \begin{split}
            &\norm{\scp{\phi_{L_m}}{\psi_{L_m}}}_{L^\infty(0,T;\mathcal{H}^1)} + \h(K)^{1/2}\norm{\alpha\psi_{L_m} - \phi_{L_m}}_{L^2(0,T;L^2(\Ga))}  \\
            &\quad + \norm{\scp{\Grad\mu_{L_m}}{\Gradg\theta_{L_m}}}_{L^2(0,T;\mathcal{L}^2)} + L_m^{-1/2}\norm{\beta\theta_{L_m} - \mu_{L_m}}_{L^2(0,T;L^2(\Ga))} \leq C. \label{EST:LM:0}
        \end{split}
    \end{align}
    If $K=\infty$, we do not have the bulk-surface Poincar\'{e} inequality at our disposal. Instead, we have to argue as in the proof of Theorem~\ref{THEOREM:K->0} and make use of the additional assumption \ref{ASSUMP:POTENTIALS:2} to obtain the estimate \eqref{EST:LM:0}. In particular, \eqref{EST:LM:0} yields \eqref{EST:L->0}. Arguing as in the proof of Theorem~\ref{THEOREM:EOWS}, we additionally infer that
    \begin{align}
        \norm{\scp{\mu_{L_m}}{\theta_{L_m}}}_{L^\infty(0,T;(\mathcal{H}^1_{K,\alpha})^\prime)} \leq C.
    \end{align}
    If $K\in(0,\infty]$, we may use Lemma~\ref{LEMMA:DUALITY} and further obtain
    \begin{align}\label{EST:MT:L^4-L^2:LM:0}
        \norm{\scp{\mu_{L_m}}{\theta_{L_m}}}_{L^4(0,T;\mathcal{L}^2)} \leq C,
    \end{align}
    which in combination with \eqref{EST:LM:0} yields
    \begin{align}\label{EST:MT:FULL:LM:0} 
        \norm{\scp{\mu_{L_m}}{\theta_{L_m}}}_{L^2(0,T;\mathcal{H}^1)} \leq C.
    \end{align}
    In the case $K=0$, we argue analogously as in the proof of Theorem~\ref{THEOREM:K->0} to deduce that
    \begin{align}
        \norm{\scp{\mu_{L_m}}{\theta_{L_m}}}_{L^2(0;T;\mathcal{L}^2)} \leq C,
    \end{align}
    from which we obtain \eqref{EST:MT:FULL:LM:0} as well. 
    
    For the time derivatives, we again proceed similarly as in the proof of Theorem~\ref{THEOREM:EOWS} but choose the test function space $\mathcal{D}_\beta$ instead of $\mathcal{H}^1$. We then obtain due to the weak formulation \eqref{WF:PP} and the uniform bound \eqref{EST:LM:0} that
    \begin{align}\label{EST:DELT:LM:0}
        \norm{\scp{\delt\phi_{L_m}}{\delt\psi_{L_m}}}_{L^2(0,T;\mathcal{D}_\beta^\prime)} \leq C.
    \end{align}
    In view of the uniform estimates \eqref{EST:LM:0}, \eqref{EST:MT:L^4-L^2:LM:0}, \eqref{EST:MT:FULL:LM:0} and \eqref{EST:DELT:LM:0}, the Banach--Alaoglu theorem and the Aubin--Lions--Simon lemma imply the existence of functions $\phi_\ast, \psi_\ast, \mu_\ast$ and $\theta_\ast$ such that
    \begin{alignat}{3}
        \scp{\delt\phi_{L_m}}{\delt\psi_{L_m}} &\rightarrow \scp{\delt\phi_\ast}{\delt\psi_\ast} \quad\quad&&\text{weakly in } L^2(0,T;\mathcal{D}_\beta^\prime), \\[0.3em]
        \scp{\phi_{L_m}}{\psi_{L_m}} &\rightarrow  \scp{\phi_\ast}{\psi_\ast} &&\text{weakly-star in } L^\infty(0,T;\mathcal{H}^1_{K,\alpha}),\nonumber\\
        & &&\text{strongly in } C([0,T];\mathcal{H}^s) \text{ for all } s\in[0,1),  \\[0.3em]
        \scp{\mu_{L_m}}{\theta_{L_m}} &\rightarrow \scp{\mu_\ast}{\theta_\ast} &&\text{weakly in } L^2(0,T;\mathcal{H}^1), \nonumber \\
        & &&\text{weakly in } L^4(0,T;\mathcal{L}^2) \ \text{if } K\in(0,\infty], \\[0.3em]
        \beta\theta_{L_m} - \mu_{L_m} &\rightarrow \beta\theta_\ast - \mu_\ast &&\text{weakly in } L^2(0,T;L^2(\Ga)), \label{CONV:MTB:LM:0} 
    \end{alignat}
    as $m\rightarrow\infty$, along a non-relabeled subsequence. Furthermore, we conclude from \eqref{EST:LM:0} that
    \begin{align}
        \norm{\beta\theta_{L_m} - \mu_{L_m}}_{L^2(\Sigma)} \leq C\sqrt{L_m} \rightarrow 0,
    \end{align}
    as $m\rightarrow\infty$. In combination with \eqref{CONV:MTB:LM:0} we infer that $\mu_\ast = \beta\theta_\ast$ a.e.~on $\Sigma$ due to the uniqueness of the limit. Proceeding similarly as in the case $K\rightarrow 0$ (see the proof of Theorem~\ref{THEOREM:K->0}), we eventually show that the quadruplet $(\phi_\ast,\psi_\ast,\mu_\ast,\theta_\ast)$ is a weak solution of the system \eqref{EQ:SYSTEM} in the sense of Definition~\ref{DEF:WS} for $L=0$.
\end{proof}

\subsection{The limit \texorpdfstring{$L\rightarrow \infty$}{L -> inf} and the existence of a weak solution if \texorpdfstring{$(K,L)\in[0,\infty]\times\{\infty\}$}{L = inf}}

\begin{theorem}\textnormal{(Asymptotic  limit $L\rightarrow\infty$)}\label{THEOREM:L->inf}
    Suppose that the assumptions \ref{ASSUMP:1}--\ref{ASSUMP:POTENTIALS:1} hold, let $\scp{\phi_0}{\psi_0}\in\mathcal{H}^1_{K,\alpha}$ be arbitrary initial data for $K\in[0,\infty]$, $\boldsymbol{v}\in L^2(0,T;\mathbf{L}_\Div^3(\Om))$ and $\boldsymbol{w}\in L^2(0,T;\mathbf{L}^3_\tau(\Ga))$. 
    In the case $K\in \{0,\infty\}$, we further assume that \ref{ASSUMP:POTENTIALS:2} holds.
    For any $L\in(0,\infty)$, let $(\phi^L,\psi^L,\mu^L,\theta^L)$ denote a weak solution to the system \eqref{EQ:SYSTEM} in the sense of Definition~\ref{DEF:WS} with initial data $\scp{\phi_0}{\psi_0}$. Then there exists a quadruplet $(\phi^\ast,\psi^\ast,\mu^\ast,\theta^\ast)$ such that
    \begin{alignat*}{3}
        \scp{\delt\phi^L}{\delt\psi^L} &\rightarrow \scp{\delt\phi^\ast}{\delt\psi^\ast} \quad\quad&&\text{weakly in } L^2(0,T;(\mathcal{H}^1)^\prime), \\[0.3em]
        \scp{\phi^L}{\psi^L} &\rightarrow  \scp{\phi^\ast}{\psi^\ast} &&\text{weakly-star in } L^\infty(0,T;\mathcal{H}^1_{K,\alpha}),\nonumber\\
        & &&\text{strongly in } C([0,T];\mathcal{H}^s) \text{ for all } s\in[0,1),  \\[0.3em]
        \scp{\mu^L}{\theta^L} &\rightarrow \scp{\mu^\ast}{\theta^\ast} &&\text{weakly in } L^2(0,T;\mathcal{H}^1), \nonumber \\
        & &&\text{weakly in } L^4(0,T;\mathcal{L}^2) \ \text{if } K\in(0,\infty], \\[0.3em]
        \frac{1}{L}\left(\beta\theta^L - \mu^L\right) &\rightarrow 0 &&\text{strongly in } L^2(0,T;L^2(\Ga)), 
    \end{alignat*}
    as $L\rightarrow\infty$, up to subsequence extraction, with
    \begin{align}\label{EST:DNMULM}
        \frac{1}{L}\norm{\beta\theta^L-\mu^L}_{L^2(\Sigma)} \leq \frac{C}{\sqrt{L}},
    \end{align}
    and the quadruplet $(\phi^\ast,\psi^\ast,\mu^\ast,\theta^\ast)$ is a weak solution to the system \eqref{EQ:SYSTEM} in the sense of Definition~\ref{DEF:WS} for $L=\infty$, which additionally satisfies $\scp{\mu^\ast}{\theta^\ast}\in L^4(0,T;\mathcal{L}^2)$ in the case $K\in(0,\infty]$.
\end{theorem}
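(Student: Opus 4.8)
The plan is to mimic the blueprint of the three asymptotic-limit results already established (Theorems~\ref{THEOREM:K->0}, \ref{THEOREM:K->inf} and \ref{THEOREM:L->0}). I would fix an arbitrary sequence $(L_m)_{m\in\N}\subset(0,\infty)$ with $L_m\to\infty$ and, discarding finitely many terms, assume $L_m\geq 1$; for each $m$ let $(\phi^{L_m},\psi^{L_m},\mu^{L_m},\theta^{L_m})$ be a corresponding weak solution in the sense of Definition~\ref{DEF:WS}. The strategy is then: derive bounds uniform in $m$, extract a convergent subsequence, and pass to the limit in the weak formulation \eqref{WF}, the mass conservation law \eqref{MCL} and the energy inequality \eqref{WEDL}, each taken with $\h(L)=\h(\infty)=0$. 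A structural simplification compared with the limit $L\to 0$ is that the test-function space $\mathcal{H}^1_{L,\beta}$ equals $\mathcal{H}^1$ for \emph{every} $L\in(0,\infty]$, so no reconciliation of test-function spaces is required.

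For the uniform estimates I would start from the energy inequality \eqref{WEDL} written for each approximate solution. Since the initial energy $E_K(\phi_0,\psi_0)$ is a fixed finite number, the dissipation terms have good signs by \ref{ASSUMP:MOBILITY}, and the convective terms are absorbed exactly as in \eqref{EST:convective-terms}, Gronwall's lemma yields
\begin{align*}
&\norm{\scp{\phi^{L_m}}{\psi^{L_m}}}_{L^\infty(0,T;\mathcal{H}^1_{K,\alpha})}
+ \norm{\scp{\Grad\mu^{L_m}}{\Gradg\theta^{L_m}}}_{L^2(0,T;\mathcal{L}^2)} \\
&\qquad + L_m^{-1/2}\norm{\beta\theta^{L_m}-\mu^{L_m}}_{L^2(\Sigma)} \leq C ,
\end{align*}
where the $L^\infty(0,T;\mathcal{H}^1)$-control of $(\phi^{L_m},\psi^{L_m})$ is obtained for $K\in(0,\infty)$ through the bulk-surface Poincar\'e inequality \ref{PRELIM:POINCINEQ} (whose constant depends on $K,\alpha,\beta,\Om$ but \emph{not} on $L$) together with \eqref{MCL}, and for $K\in\{0,\infty\}$ directly from \ref{ASSUMP:POTENTIALS:2}, precisely as in the proof of Theorem~\ref{THEOREM:K->0}. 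Rearranging the last term gives $L_m^{-1}\norm{\beta\theta^{L_m}-\mu^{L_m}}_{L^2(\Sigma)}\leq C/\sqrt{L_m}$, i.e.\ \eqref{EST:DNMULM}. Next, testing \eqref{WF:MT} with $\scp{\eta}{\vartheta}\in\mathcal{H}^1_{K,\alpha}$ and using the growth of $F',G'$, the Sobolev and trace embeddings and the bound just obtained, I would get $\norm{\scp{\mu^{L_m}}{\theta^{L_m}}}_{L^\infty(0,T;(\mathcal{H}^1_{K,\alpha})')}\leq C$; Lemma~\ref{LEMMA:DUALITY} combined with the gradient bound then gives $\norm{\scp{\mu^{L_m}}{\theta^{L_m}}}_{L^4(0,T;\mathcal{L}^2)}\leq C$ when $K\in(0,\infty]$ (and an $L^2(0,T;\mathcal{L}^2)$-bound when $K=0$, as in Theorem~\ref{THEOREM:K->0}), whence $\norm{\scp{\mu^{L_m}}{\theta^{L_m}}}_{L^2(0,T;\mathcal{H}^1)}\leq C$. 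Finally, testing \eqref{WF:PP} with $\scp{\zeta}{\xi}\in\mathcal{H}^1$ and comparing terms as in the derivation of \eqref{EST:delt-uniform}, with the boundary contribution $\h(L_m)(\beta\theta^{L_m}-\mu^{L_m})$ bounded in $L^2(\Sigma)$ by $C/\sqrt{L_m}\leq C$ thanks to \eqref{EST:DNMULM}, gives $\norm{\scp{\delt\phi^{L_m}}{\delt\psi^{L_m}}}_{L^2(0,T;(\mathcal{H}^1)')}\leq C$.

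With these bounds in hand, the Banach--Alaoglu and Aubin--Lions--Simon theorems furnish a non-relabeled subsequence converging in exactly the modes claimed; in particular the strong convergence in $C([0,T];\mathcal{H}^s)$ for $s<1$ gives a.e.\ convergence of $\phi^{L_m},\psi^{L_m}$ and, via the trace theorem, of $\alpha\psi^{L_m}-\phi^{L_m}$ in $C([0,T];L^2(\Ga))$, while \eqref{EST:DNMULM} forces $L_m^{-1}(\beta\theta^{L_m}-\mu^{L_m})\to 0$ in $L^2(0,T;L^2(\Ga))$, and (closedness of $\mathcal{H}^1_{K,\alpha}$) the limit is $\mathcal{H}^1_{K,\alpha}$-valued. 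The properties of Definition~\ref{DEF:WS} for the limit with $L=\infty$ then follow essentially verbatim from the proofs of Theorems~\ref{THEOREM:EOWS} and \ref{THEOREM:K->0}: the regularity \eqref{REGPP}--\eqref{REGMT} and the initial conditions \ref{DEF:WS:IC} come from the convergences; in the weak formulation \eqref{WF} the term $\h(L_m)\intG(\beta\theta^{L_m}-\mu^{L_m})(\beta\xi-\zeta)\dG$ vanishes in the limit because $L_m^{-1}(\beta\theta^{L_m}-\mu^{L_m})\to 0$ strongly in $L^2(\Sigma)$, the convective terms pass to the limit by weak--strong convergence, and the mobility and potential terms are treated as in Theorem~\ref{THEOREM:EOWS}; for the energy inequality one notes that each approximate solution already satisfies \eqref{WEDL} in its $L=\infty$ form, since the nonnegative dissipation term $L_m^{-1}\int_0^t\intG(\beta\theta^{L_m}-\mu^{L_m})^2\dGs$ may simply be dropped from the left-hand side, and then one invokes the weak lower semicontinuity and Fatou arguments of Theorem~\ref{THEOREM:EOWS}.

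The one place where the argument genuinely departs from the earlier limit theorems is the mass conservation law, and this is where I expect the (mild) subtlety to lie. For finite $L_m$ the approximate solutions only satisfy the \emph{combined} identity $\beta\intO\phi^{L_m}(t)\dx+\intG\psi^{L_m}(t)\dG=\beta\intO\phi_0\dx+\intG\psi_0\dG$, whereas the $L=\infty$ case of \eqref{MCL} requires the two \emph{separate} identities $\intO\phi^\ast(t)\dx=\intO\phi_0\dx$ and $\intG\psi^\ast(t)\dG=\intG\psi_0\dG$. Rather than passing to the limit in the approximate law, I would read these off the limiting weak formulation directly: testing \eqref{WF:PP} (with $\h(\infty)=0$) against the constant pairs $\scp{1}{0}$ and $\scp{0}{1}\in\mathcal{H}^1$ makes all diffusive, convective and boundary contributions vanish because $\Grad 1=0$ and $\Gradg 1=0$, leaving $\ddt\intO\phi^\ast(t)\dx=0$ and $\ddt\intG\psi^\ast(t)\dG=0$ for a.e.\ $t$; since $t\mapsto\intO\phi^\ast(t)\dx$ and $t\mapsto\intG\psi^\ast(t)\dG$ are absolutely continuous (from $\scp{\phi^\ast}{\psi^\ast}\in H^1(0,T;(\mathcal{H}^1)')$) and take the correct values at $t=0$, \eqref{MCL} holds for every $t\in[0,T]$. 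Apart from this, the only technical care needed is checking that all estimates are uniform in $L_m$; this works precisely because the sole $L_m$-dependent contribution to the energy inequality is the nonnegative term $L_m^{-1}\int_0^t\intG(\beta\theta^{L_m}-\mu^{L_m})^2\dGs$, and because the decay rate $C/\sqrt{L_m}$ in \eqref{EST:DNMULM} is exactly what is required to bound the $\h(L_m)$-boundary term in the estimate for $\scp{\delt\phi^{L_m}}{\delt\psi^{L_m}}$.
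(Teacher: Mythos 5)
Your proposal is correct and follows essentially the same route as the paper: uniform bounds from the energy inequality (Poincar\'e or \ref{ASSUMP:POTENTIALS:2} depending on $K$), the dual-norm/Lemma~\ref{LEMMA:DUALITY} argument for $(\mu^{L_m},\theta^{L_m})$, a comparison argument for the time derivatives using $L_m\ge 1$, compactness, and passage to the limit with the $\h(L_m)$-term vanishing at rate $C/\sqrt{L_m}$. In particular, the mass-conservation subtlety you flag is resolved in the paper exactly as you propose, by testing the decoupled limiting weak formulation with $\zeta\equiv 1$ and $\xi\equiv 1$ separately.
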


\begin{remark}
    Assuming that for any $L\in(0,\infty)$, the chemical potential $\mu^L$ is sufficiently regular such that the boundary condition \eqref{EQ:SYSTEM:6} holds in the strong sense, that is
    \begin{align*}
        L m_\Omega(\phi^L) \deln \mu^L = \beta\theta^L - \mu^L
        \quad
        \text{a.e.~on $\Sigma$},
    \end{align*}
    estimate \eqref{EST:DNMULM} can be reformulated as
    \begin{align*}
        \norm{\deln\mu^L}_{L^2(\Sigma)} \leq \frac{C}{\sqrt{L}}.
        % \to 0 \quad\text{as $m\to\infty$.}
    \end{align*}
    As the right-hand side of this inequality tends to zero as $L\to\infty$, this explains why the homogeneous Neumann boundary condition $\deln \mu^* = 0$ a.e.~on $\Sigma$ appears in the limit model corresponding to $L=\infty$.
\end{remark}

\begin{proof}[Proof of Theorem~\ref{THEOREM:L->inf}]
    We consider an arbitrary sequence $(L_m)\subset(0,\infty)$ such that $L_m\rightarrow\infty$ as $m\rightarrow\infty$ and a corresponding weak solution $(\phi^{L_m},\psi^{L_m},\theta^{L_m},\mu^{Lm})$ to \eqref{EQ:SYSTEM} in the sense of Definition~\ref{DEF:WS} to the initial data $\scp{\phi_0}{\psi_0}$. Since $L_m\rightarrow\infty$ as $m\rightarrow\infty$, we can assume without loss of generality that $L_m\in[1,\infty)$ for all $m\in\N$. In this proof, we use the letter $C$ to denote generic positive constants independent of $L_m$ and $m$, which may change their value from line to line. Let now $m\in\N$ be arbitrary.
    
    Using the same argumentation as in the proof of Theorem~\ref{THEOREM:L->0}, we infer
    \begin{align}
        \begin{split}
        &\norm{\scp{\phi^{L_m}}{\psi^{L_m}}}_{L^\infty(0,T;\mathcal{H}^1)} + \h(K)^{1/2}\norm{\alpha\psi^{L_m} - \phi^{L_m}}_{L^\infty(0,T;L^2(\Ga))} \\
        &\quad + \norm{\scp{\Grad\mu^{L_m}}{\Gradg\theta^{L_m}}}_{L^2(0,T;\mathcal{L}^2)} + L_m^{-1/2}\norm{\beta\theta^{L_m} - \mu^{L_m}}_{L^2(0,T;L^2(\Ga))} \leq C. 
        \end{split}
        \label{EST:LM:INF}
    \end{align}
    In particular, \eqref{EST:LM:INF} already entails \eqref{EST:DNMULM}.
    If $K\in (0,\infty]$, we again derive the estimate
    \begin{align}\label{EST:MT:L^4-L^2:LM:INF}
        \norm{\scp{\mu^{L_m}}{\theta^{L_m}}}_{L^4(0,T;\mathcal{L}^2)} \leq C,
    \end{align}
    while in the case $K=0$, we merely obtain
    \begin{align}
        \norm{\scp{\mu^{L_m}}{\theta^{L_m}}}_{L^2(0,T;\mathcal{L}^2)} \leq C.
    \end{align}
    In both cases, we thus get
    \begin{align}\label{EST:MT:FULL:LM:INF}
        \norm{\scp{\mu^{L_m}}{\theta^{L_m}}}_{L^2(0,T;\mathcal{H}^1)} \leq C.
    \end{align}
    For the time derivatives, we proceed similarly as in the proof of Theorem~\ref{THEOREM:EOWS} and obtain due to the weak formulation \eqref{WF:PP} and the uniform bounds \eqref{EST:LM:INF} that
    \begin{align}\label{EST:DELT:LM:INF}
        \norm{\scp{\delt\phi^{L_m}}{\delt\psi^{L_m}}}_{L^2(0,T;(\mathcal{H}^1)^\prime)} \leq C\left(1 + \frac{1}{\sqrt{L_m}}\right) \leq C.
    \end{align}
    Here we additionally used $L_m \geq 1$ for all $m\in\N$.
    In view of the uniform estimates \eqref{EST:LM:INF}, \eqref{EST:MT:L^4-L^2:LM:INF}--\eqref{EST:MT:FULL:LM:INF} and \eqref{EST:DELT:LM:INF}, the Banach--Alaoglu theorem and the Aubin--Lions--Simon lemma imply the existence of functions $\phi^\ast, \psi^\ast, \mu^\ast$ and $\theta^\ast$ such that
    \begin{alignat}{3}
        \scp{\delt\phi^{L_m}}{\delt\psi^{L_m}} &\rightarrow \scp{\delt\phi^\ast}{\delt\psi^\ast} \quad\quad&&\text{weakly in } L^2(0,T;(\mathcal{H}^1)^\prime), \\[0.3em]
        \scp{\phi^{L_m}}{\psi^{L_m}} &\rightarrow  \scp{\phi^\ast}{\psi^\ast} &&\text{weakly-star in } L^\infty(0,T;\mathcal{H}^1_{K,\alpha}),\nonumber\\
        & &&\text{strongly in } C([0,T];\mathcal{H}^s) \text{ for all } s\in[0,1),  \\[0.3em]
        \scp{\mu^{L_m}}{\theta^{L_m}} &\rightarrow \scp{\mu^\ast}{\theta^\ast} &&\text{weakly in } L^2(0,T;\mathcal{H}^1),  \nonumber\\
        & &&\text{weakly in } L^4(0,T;\mathcal{L}^2) \ \text{if } K \in(0,\infty], 
    \end{alignat}
    as $m\rightarrow\infty$, along a non-relabeled subsequence. 
    Due to \eqref{EST:LM:INF}, we additionally have
    \begin{align}
        \frac{1}{L_m}\norm{\beta\theta^{L_m} - \mu^{L_m}}_{L^2(\Sigma)} \leq \frac{C}{\sqrt{L_m}}\rightarrow 0
    \end{align}
    as $m\rightarrow\infty$. 
    Arguing further as in the case $K\rightarrow\infty$ (see the proof of Theorem~\ref{THEOREM:K->inf}), we eventually show that the quadruplet $(\phi^\ast,\psi^\ast,\mu^\ast,\theta^\ast)$ satisfies Definition~\ref{DEF:WS:REG}--\ref{DEF:WS:WF} and \ref{DEF:WS:WEDL}. For the mass conservation law, simply note that the weak formulations for $\phi^\ast$ and $\psi^\ast$, as well as the test functions, are not coupled, which allows us to use $\zeta\equiv 1$ and $\xi\equiv 1$ as test functions, respectively. Integrating the resulting equations in time from $0$ to $t$, and employing the fundamental theorem of calculus, we infer
    \begin{align*}
        \intO \phi^\ast(t)\dx = \intO\phi_0\dx \quad\text{and}\quad \intG\psi^\ast(t)\dG = \intG \psi_0\dG
    \end{align*}
    for all $t\in[0,T]$, which shows that $(\phi^\ast,\psi^\ast,\mu^\ast,\theta^\ast)$ is a solution of the system \eqref{EQ:SYSTEM} in the sense of Definition~\ref{DEF:WS} for $L=\infty$.
\end{proof}

We conclude this section with the following remark.

\begin{remark}
    If the uniqueness of the respective limiting models is known (e.g. if the mobility functions are constant, cf.~Theorem~\ref{THEOREM:CD}), we even obtain convergence of the \textit{full} sequence (not only a subsequence) in Theorem~\ref{THEOREM:K->0}, Theorem~\ref{THEOREM:K->inf}, Theorem~\ref{THEOREM:L->0} and Theorem~\ref{THEOREM:L->inf}, respectively, by means of a standard subsequence argument.
\end{remark}

\section{Higher regularity, continuous dependence and uniqueness}
\label{SECT:REG+CD}
In this section, we present the results on higher regularity for the phase-fields as well as the continuous dependence and uniqueness of weak solutions to the system \eqref{EQ:SYSTEM} as stated in Theorem~\ref{THEOREM:EOWS} and Theorem~\ref{THEOREM:CD}, respectively.

\subsection{Higher spatial regularity for the phase-fields}
\label{SUBSEC:REG}

\begin{theorem}\textnormal{(Higher regularity)}\label{THEOREM:REG}
    Suppose that the assumptions \ref{ASSUMP:1}--\ref{ASSUMP:POTENTIALS:3} hold, and let $K,L\in[0,\infty]$, $\scp{\phi_0}{\psi_0}\in\mathcal{H}^1_{K,\alpha}$, $\boldsymbol{v}\in L^2(0,T;\mathbf{L}_\Div^3(\Om))$ and $\boldsymbol{w}\in L^2(0,T;\mathbf{L}^3_\tau(\Ga))$. Suppose that the domain $\Om$ is of class $C^k$ for $k\in\{2,3\}$, and in the case $d=3$, we further assume that \ref{ASSUMP:POTENTIALS:1} holds with $p\leq 4$. Then, if a weak solution $(\phi,\psi,\mu,\theta)$ of the system \eqref{EQ:SYSTEM} in the sense of Definition~\ref{DEF:WS} exists, it additionally satisfies
    \begin{align}
        \label{REG:SP:1}
        \scp{\phi}{\psi}&\in L^4(0,T;\mathcal{H}^2)\quad&&\text{if~}K\in(0,\infty], \\
        \label{REG:SP:2}
        \scp{\phi}{\psi}&\in L^2(0,T;\mathcal{H}^k)
        &&\text{for $k\in\{2,3\}$},\\
        \label{REG:SP:3}
        \scp{\phi}{\psi}&\in C([0,T];\mathcal{H}^1) \quad 
        &&\text{if $(K,L)\in[0,\infty]\times (0,\infty]$ and $k=3$}
    \end{align}
    as well as
\begin{align}
    &\mu = -\Lap\phi + F'(\phi) &&\text{a.e.~in } Q, \\
    &\theta = -\Lapg\psi + G'(\psi) + \alpha\deln\phi &&\text{a.e.~on } \Sigma, \\
    &\begin{cases} 
        K\deln\phi = \alpha\psi - \phi &\text{if} \ K\in [0,\infty), \\
        \deln\phi = 0 &\text{if} \ K = \infty
    \end{cases} && \text{a.e.~on } \Sigma.
\end{align}
\end{theorem}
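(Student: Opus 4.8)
The plan is to read the second equation \eqref{WF:MT} of the weak formulation, for almost every fixed $t\in[0,T]$, as the weak form of a stationary bulk--surface elliptic system for the pair $\scp{\phi(t)}{\psi(t)}$, to run elliptic regularity twice, and finally to use an interpolation-in-time argument for the continuity statement \eqref{REG:SP:3}. Indeed, \eqref{WF:MT} says that $\scp{\phi(t)}{\psi(t)}\in\mathcal{H}^1_{K,\alpha}$ is a weak solution of $-\Lap\phi = \mu - F'(\phi)$ in $\Om$ and $-\Lapg\psi + \alpha\deln\phi = \theta - G'(\psi)$ on $\Ga$, supplemented with the coupling condition $K\deln\phi = \alpha\psi - \phi$ if $K\in[0,\infty)$ (Dirichlet type for $K=0$, Robin type for $K\in(0,\infty)$) or $\deln\phi = 0$ if $K=\infty$, in which case the surface equation decouples. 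First I would verify that the data is regular: from the growth bounds in \ref{ASSUMP:POTENTIALS:1}, the embedding $H^1(\Om)\emb L^6(\Om)$ (and, for $d=3$, the hypothesis $p\le 4$) together with $H^1(\Ga)\emb L^r(\Ga)$ for all $r<\infty$, and from $\scp{\phi}{\psi}\in L^\infty(0,T;\mathcal{H}^1_{K,\alpha})$, one gets $F'(\phi)\in L^\infty(0,T;L^2(\Om))$ and $G'(\psi)\in L^\infty(0,T;L^2(\Ga))$. Combined with $\scp{\mu}{\theta}\in L^2(0,T;\mathcal{L}^2)$, and with $\scp{\mu}{\theta}\in L^4(0,T;\mathcal{L}^2)$ when $K\in(0,\infty]$ (cf.~\eqref{REG:1}), the right-hand sides of the elliptic system lie in $L^2(0,T;\mathcal{L}^2)$, and in $L^4(0,T;\mathcal{L}^2)$ if $K\in(0,\infty]$.

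For the first bootstrap, since $\Om$ is of class $C^k$ with $k\ge 2$, I would invoke the elliptic regularity theory for second-order bulk--surface systems (cf.~\cite{Knopf2020,Knopf2021}), which gives $\scp{\phi(t)}{\psi(t)}\in\mathcal{H}^2$ for a.e.~$t$ with an estimate of the form $\norm{\scp{\phi(t)}{\psi(t)}}_{\mathcal{H}^2}\le C\big(\norm{\scp{\mu(t)-F'(\phi(t))}{\theta(t)-G'(\psi(t))}}_{\mathcal{L}^2} + \norm{\scp{\phi(t)}{\psi(t)}}_{\mathcal{H}^1}\big)$. Squaring, respectively raising to the fourth power if $K\in(0,\infty]$, integrating over $(0,T)$, and using $\scp{\phi}{\psi}\in L^\infty(0,T;\mathcal{H}^1)$, yields $\scp{\phi}{\psi}\in L^2(0,T;\mathcal{H}^2)$, and $\scp{\phi}{\psi}\in L^4(0,T;\mathcal{H}^2)$ if $K\in(0,\infty]$; this is \eqref{REG:SP:1} and the case $k=2$ of \eqref{REG:SP:2}. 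At this point the weak solution coincides with the strong solution of the elliptic system, so the identities for $\mu$ and $\theta$ and the boundary condition for $\phi$ hold a.e.~in the stated sense.

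For the second bootstrap, knowing now $\scp{\phi}{\psi}\in L^2(0,T;\mathcal{H}^2)$, I would upgrade the data using \ref{ASSUMP:POTENTIALS:3} and a chain rule (cf.~Proposition~\ref{CR}): since $\Grad F'(\phi) = F''(\phi)\Grad\phi$ with $\Grad\phi\in H^1(\Om)\emb L^6(\Om)$ and $F''(\phi)\in L^3(\Om)$ (because $|F''(\phi)|\le c(1+|\phi|^{p-2})$ and, for $p\le 4$, $|\phi|^{p-2}\in L^3(\Om)$ via $H^1\emb L^6$), one obtains $F'(\phi)\in L^2(0,T;H^1(\Om))$, and similarly $G'(\psi)\in L^2(0,T;H^1(\Ga))$ using $H^1(\Ga)\emb L^r(\Ga)$ for all $r<\infty$; together with $\scp{\mu}{\theta}\in L^2(0,T;\mathcal{H}^1)$ this puts the elliptic data in $L^2(0,T;\mathcal{H}^1)$. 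A second application of bulk--surface elliptic regularity, now with $\Om$ of class $C^3$, then gives $\scp{\phi}{\psi}\in L^2(0,T;\mathcal{H}^3)$, i.e.~the case $k=3$ of \eqref{REG:SP:2}. Finally, if $(K,L)\in[0,\infty]\times(0,\infty]$ then $(\mathcal{H}^1_{L,\beta})' = (\mathcal{H}^1)'$, so $\scp{\phi}{\psi}\in L^2(0,T;\mathcal{H}^3)$ and $\scp{\delt\phi}{\delt\psi}\in L^2(0,T;(\mathcal{H}^1)')$; since $\mathcal{H}^1$ is the interpolation space $[\mathcal{H}^3,(\mathcal{H}^1)']_{1/2}$, the Lions--Magenes interpolation theorem yields $\scp{\phi}{\psi}\in C([0,T];\mathcal{H}^1)$, which is \eqref{REG:SP:3}.

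The step I expect to be the main obstacle is the elliptic regularity for the \emph{coupled} second-order bulk--surface system --- in particular the Robin coupling for $K\in(0,\infty)$ and, above all, the Dirichlet coupling for $K=0$, where the bulk Dirichlet datum is only known to lie in $H^1(\Ga)$ a priori, so the bulk and surface equations must be handled simultaneously and cannot be treated by naive iteration. This is also exactly where the $C^k$-regularity of $\Om$ and, for $d=3$, the restriction $p\le 4$ genuinely enter. A minor secondary point is that the continuity statement really requires $L\neq 0$: for $L=0$ the time derivative only lies in $\mathcal{D}_\beta'$, which is not $\mathcal{H}^{-1}$, so the interpolation argument breaks down in general (it can be repaired under the compatibility condition on $F$ and $G$ recorded in the remark after Theorem~\ref{THEOREM:EOWS}).
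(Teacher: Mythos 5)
Your argument is correct and, for the core of the theorem, coincides with the paper's proof: both read \eqref{WF:MT} at fixed $t$ as a weak bulk--surface elliptic system with data $f=\mu-F'(\phi)$, $g=\theta-G'(\psi)$, treat $K\in[0,\infty)$ via the coupled elliptic regularity theory of \cite{Knopf2021} and $K=\infty$ via the decoupled Neumann and Laplace--Beltrami problems, and run exactly the same two bootstraps (the $L^2\to\mathcal{H}^2$ step raised to the fourth power when $\scp{\mu}{\theta}\in L^4(0,T;\mathcal{L}^2)$, then the $H^1$-bound on $F'(\phi)$, $G'(\psi)$ via $F''(\phi)\Gradg\phi\in L^2$ using $p\le 4$ and $H^1\emb L^6$ to reach $\mathcal{H}^3$). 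The one place where you genuinely diverge is the continuity statement \eqref{REG:SP:3}: the paper invokes Proposition~\ref{CR}, a mollification-in-time chain-rule result from the appendix (borrowed from \cite{Colli2023}), whereas you use the Lions--Magenes intermediate-derivatives theorem together with the interpolation identity $[\mathcal{H}^3,(\mathcal{H}^1)']_{1/2}=\mathcal{H}^1$. Your route is shorter and avoids the appendix entirely, at the cost of having to justify that interpolation identity for the product space on a $C^3$ domain (componentwise complex interpolation of the scales $H^3(\Om)\supset L^2(\Om)\supset (H^1(\Om))'$ and the analogous surface scale, with duality taken with respect to the $\mathcal{L}^2$ pivot); the paper's mollification argument sidesteps interpolation theory and additionally delivers the chain-rule formula \eqref{A:id:cr}, which is why the authors package it as a separate proposition. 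Both correctly identify that the argument needs $L\ne 0$, since for $L=0$ the time derivative only lives in $\mathcal{D}_\beta'$. I see no gap in your proposal; the only cosmetic omission is that for $K=\infty$ the paper explicitly verifies the solvability condition $\intO f(t)\dx=0$ before citing Neumann regularity (by testing with $\scp{1}{0}$), but since $\phi(t)$ is already a given weak solution this is automatic.
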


\begin{proof}
    Let $(\phi,\psi,\mu,\theta)$ be a weak solution to the system \eqref{EQ:SYSTEM} in the sense of Definition~\ref{DEF:WS}. We infer from the weak formulation \eqref{WF:MT} that
    \begin{align}\label{WF:REGULARITY}
        \begin{split}
            &\intO\Grad\phi(t)\cdot\Grad\eta\dx + \intG\Gradg\psi(t)\cdot\Gradg\vartheta\dG + \h(K)\intG(\alpha\psi(t)-\phi(t))(\alpha\vartheta-\eta)\dG \\
            &\quad = \intO \big(\mu(t) - F^\prime(\phi(t))\big)\eta\dx + \intG \big(\theta(t) - G^\prime(\psi(t))\big)\vartheta\dG
        \end{split}
    \end{align}
    for almost all $t\in[0,T]$ and all $\scp{\eta}{\vartheta}\in\mathcal{H}^1_{K,\alpha}$. If $K\in[0,\infty)$, this means that the pair $\scp{\phi(t)}{\psi(t)}$ is a weak solution of the bulk-surface elliptic problem
    \begin{subequations}
        \begin{alignat*}{3}
            -\Lap\phi(t) &= f(t) \qquad &&\text{in } \Om, \\
            -\Lapg\psi(t) + \alpha\deln\phi(t) &= g(t) &&\text{on } \Ga, \\
            K\deln\phi(t) &= \alpha\psi(t) - \phi(t) \quad&&\text{on } \Ga,
        \end{alignat*}
    \end{subequations}
    for almost all $t\in[0,T]$, in the sense of \cite[Definition 3.1]{Knopf2021}, where
    \begin{align*}
        f(t) = \mu(t) - F^\prime(\phi(t)), \quad\text{and}\quad g(t) = \theta(t) - G^\prime(\psi(t)).
    \end{align*}
    If $K=\infty$, we may choose $\scp{\eta}{0}\in\mathcal{H}^1$ and $\scp{0}{\vartheta}\in\mathcal{H}^1$ as test functions in \eqref{WF:REGULARITY}, respectively. This yields that $\phi(t)$ is a weak solution to the Poisson-Neumann problem
    \begin{subequations}\label{POISSON-NEUMANN}
        \begin{alignat}{3}
            -\Lap\phi(t) &= f(t) \quad&&\text{in~}\Om, \\
            \deln\phi(t) &= 0 &&\text{on~} \Ga
        \end{alignat}
    \end{subequations}
    for almost all $t\in[0,T]$, whereas $\psi(t)$ is a weak solution to the elliptic problem
    \begin{align}\label{POISSON-BOUNDARY}
        -\Lapg\psi(t) = g(t) \quad\text{on~}\Ga
    \end{align}
    for almost all $t\in[0,T]$. 
    
    Recalling that $p\leq 4$, we use the growth conditions on $F^\prime$ and $G^\prime$ (see \ref{ASSUMP:POTENTIALS:1}), the Sobolev embeddings $H^1(\Om)\emb L^6(\Om)$ and $H^1(\Ga)\emb L^{2(q-1)}(\Ga)$ and the fact that $\scp{\phi}{\psi}\in L^\infty(0,T;\mathcal{H}^1_{K,\alpha})$ to derive the estimate
    \begin{align}\label{EST:F':L^2}
        \norm{F^\prime(\phi(t))}_{L^2(\Om)} &\leq C + C\norm{\phi(t)}^{p-1}_{L^{2(p-1)}(\Om)} \leq C,
    \end{align}
    as well as
    \begin{align}\label{EST:G':L^2}
        \norm{G^\prime(\psi(t))}_{L^2(\Ga)} \leq C + C\norm{\psi(t)}^{q-1}_{L^{2(q-1)}(\Ga)} \leq C,
    \end{align}
    for almost all $t\in[0,T]$. In particular, the estimates \eqref{EST:F':L^2} and \eqref{EST:G':L^2} imply $\scp{f(t)}{g(t)}\in\mathcal{L}^2$ with
    \begin{align}
        \norm{f(t)}_{L^2(\Om)} &\leq C + \norm{\mu(t)}_{L^2(\Om)}, \label{EST:f:L^2}\\
        \norm{g(t)}_{L^2(\Ga)} &\leq C + \norm{\theta(t)}_{L^2(\Ga)} \label{EST:g:L^2},
    \end{align}
    for almost all $t\in[0,T]$.
    
    We first consider the case $k=2$. In the case $K\in[0,\infty)$, we deduce from regularity theory for elliptic problems with bulk-surface coupling (see \cite[Theorem 3.3]{Knopf2021}) that $\scp{\phi(t)}{\psi(t)}\in\mathcal{H}^2$ with
    \begin{align*}
        \begin{split}
            \norm{\scp{\phi(t)}{\psi(t)}}_{\mathcal{H}^2}^2 &\leq C\norm{\scp{f(t)}{g(t)}}_{\mathcal{L}^2}^2 \\
            &\leq C\Big( \norm{\mu(t)}_{L^2(\Om)}^2 + \norm{\theta(t)}_{L^2(\Ga)}^2 + \norm{F^\prime(\phi(t))}_{L^2(\Om)}^2 + \norm{G^\prime(\psi(t))}_{L^2(\Ga)}^2\Big) \\
            &\leq C + C\norm{\mu(t)}_{L^2(\Om)}^2 + C\norm{\theta(t)}_{L^2(\Ga)}^2
        \end{split}
    \end{align*}
    for almost all $t\in[0,T]$. Integrating this inequality in time from $0$ to $T$, we use the regularity of $\scp{\mu}{\theta}$ to infer that $\scp{\phi}{\psi}\in L^2(0,T;\mathcal{H}^2)$. In the case $K\in(0,\infty)$, we even obtain $\scp{\phi}{\psi}\in L^4(0,T;\mathcal{H}^2)$ since $\scp{\mu}{\theta}\in L^4(0,T;\mathcal{L}^2)$. 
    \\
    If $K=\infty$, we may choose
    $\scp{\eta}{\vartheta} = \scp{1}{0}\in\mathcal{H}^1$ as a test function \eqref{WF:REGULARITY}, which yields
    \begin{align*}
        \intO f(t) \dx = \intO \mu(t) - F^\prime(\phi(t)) \dx = 0
    \end{align*}
    for almost all $t\in[0,T]$. This allows us to use regularity theory for Poisson's equations with homogeneous Neumann boundary condition (see, e.g. \cite[s.5, Proposition 7.7]{Taylor}) to infer that $\phi(t)\in H^2(\Om)$ with
    \begin{align*}
        \norm{\phi(t)}_{H^2(\Om)} \leq C \norm{f(t)}_{L^2(\Om)} + C\norm{\phi(t)}_{H^1(\Om)}
    \end{align*}
    for almost all $t\in[0,T]$. In combination with \eqref{EST:f:L^2} we deduce that
    \begin{align}\label{EST:phi:H^2}
        \norm{\phi(t)}_{H^2(\Om)} \leq C + C\norm{\mu(t)}_{L^2(\Om)}
    \end{align}
    for almost all $t\in[0,T]$. 
    Next, due to \eqref{EST:g:L^2}, we can apply regularity theory for elliptic equations on submanifolds (see, e.g., \cite[s.5, Theorem 1.3]{Taylor}) for \eqref{POISSON-BOUNDARY} and infer that $\psi(t)\in H^2(\Ga)$ with
    \begin{align}\label{EST:psi:H^2}
        \norm{\psi(t)}_{H^2(\Ga)} \leq C \norm{g(t)}_{L^2(\Ga)} + C\norm{\psi(t)}_{H^1(\Ga)} \leq C + C\norm{\theta(t)}_{L^2(\Ga)}
    \end{align}
    for almost all $t\in[0,T]$. Squaring the equations \eqref{EST:phi:H^2} and \eqref{EST:psi:H^2} and integrating in time over $[0,T]$ yields $\scp{\phi}{\psi}\in L^2(0,T;\mathcal{H}^2)$ which proves the assertion in the case $K=\infty$. Additionally, as $\scp{\mu}{\theta}\in L^4(0,T;\mathcal{L}^2)$, we obtain $\scp{\phi}{\psi}\in L^4(0,T;\mathcal{H}^2)$.
    
    Let us now consider the case $k=3$. Since $p\leq 4$, we use Hölder's inequality and the Sobolev embedding $H^1(\Om)\emb L^6(\Om)$ to derive the estimate
    \begin{align*}
        \begin{split}
            \norm{F^{\prime\prime}(\phi(t))\Grad\phi(t)}_{L^2(\Om)} &\leq C\norm{\Grad\phi(t)}_{L^2(\Om)} + C\norm{\abs{\phi(t)}^4\Grad\phi(t)}_{L^2(\Om)} \\
            &\leq C\norm{\Grad\phi(t)}_{L^2(\Om)} + C\norm{\phi(t)}_{L^6(\Om)}^2\norm{\Grad\phi(t)}_{L^6(\Om)} \\
            &\leq C + C\norm{\phi(t)}_{H^2(\Om)}.
        \end{split}
    \end{align*}
    for almost all $t\in[0,T]$. In combination with \eqref{EST:F':L^2} this yields
    \begin{align}\label{EST:F':H^1}
        \norm{F^\prime(\phi(t))}_{H^1(\Om)} \leq \norm{F^\prime(\phi(t))}_{L^2(\Om)} + \norm{F^{\prime\prime}(\phi(t))\Grad\phi(t)}_{L^2(\Om)} \leq C + C\norm{\phi(t)}_{H^2(\Om)}
    \end{align}
    for almost all $t\in[0,T]$. Proceeding similarly, we derive the estimate
    \begin{align*}
        \norm{G^{\prime\prime}(\psi(t))\Gradg\psi(t)}_{L^2(\Ga)} \leq C + C\norm{\psi(t)}_{H^2(\Ga)},
    \end{align*}
    which leads to
    \begin{align}\label{EST:G':H^1}
        \norm{G^\prime(\psi(t))}_{H^1(\Ga)} \leq C + C\norm{\psi(t)}_{H^2(\Ga)}
    \end{align}
    for almost all $t\in[0,T]$. We thus obtain $\scp{f(t)}{g(t)}\in\mathcal{H}^1$. If $K\in[0,\infty)$ we may use again regularity theory for elliptic problems with bulk-surface coupling (see \cite[Theorem 3.3]{Knopf2021}) to infer $\scp{\phi(t)}{\psi(t)}\in\mathcal{H}^3$ with
    \begin{align*}
        \begin{split}
            \norm{\scp{\phi(t)}{\psi(t)}}_{\mathcal{H}^3}^2 &\leq C\norm{\scp{f(t)}{g(t)}}_{\mathcal{H}^1}^2 \\
            &\leq C\norm{\mu(t)}_{H^1(\Om)}^2 + C\norm{\theta(t)}_{H^1(\Ga)}^2 + C\norm{F^\prime(\phi(t))}_{H^1(\Om)}^2 + C\norm{G^\prime(\psi(t))}_{H^1(\Ga)}^2 \\
            &\leq C + C\norm{\mu(t)}_{H^1(\Om)}^2 + C\norm{\theta(t)}_{H^1(\Ga)}^2 + C\norm{\phi(t)}_{H^2(\Om)}^2 + C\norm{\psi(t)}_{H^2(\Ga)}^2
        \end{split}
    \end{align*}
    for almost all $t\in[0,T]$. Integrating this inequality in time over $[0,T]$ yields $\scp{\phi}{\psi}\in L^2(0,T;\mathcal{H}^3)$. \\
    If $K=\infty$, the result can be established analogously to the case $k=2$ as the regularity results cited above hold true for any integer $k\geq 0$. We again find that
    \begin{align*}
        \norm{\scp{\phi(t)}{\psi(t)}}_{\mathcal{H}^3} \leq C\norm{\scp{f(t)}{g(t)}}_{\mathcal{H}^1}
    \end{align*}
    for almost all $t\in[0,T]$, where the right-hand side is bounded due to \eqref{EST:F':H^1} and \eqref{EST:G':H^1}. 

    This means that \eqref{REG:SP:1} and \eqref{REG:SP:2} are established. If $(K,L)\in[0,\infty]\times (0,\infty]$ and $k=3$, the additional continuity property \eqref{REG:SP:3} follows from Proposition~\ref{CR}. Therefore, the proof is complete.
\end{proof}

\begin{remark}
    In the other cases, where the boundary conditions involving $K$ and $L$ are of the same type (i.e., $K=L=0$ or $K=L=\infty$), it should also be possible to obtain the regularities \eqref{REG:SP:1} and \eqref{REG:SP:2} even for $d=3$ and $p\in(4,6)$. This is because in these cases, the system \eqref{CH} can be discretized by a Faedo--Galerkin scheme (cf.~Remark~\ref{REM:GAL}) and therefore, the higher order regularity estimates can be performed on the level of the approximate solutions.
    In the case $K=L=0$, we refer the reader to the proof of \cite[Theorem 3.3]{Giorgini2023}, where this line of argument is carried out in detail.
\end{remark}

\subsection{Continuous dependence and uniqueness}\label{SUBSEC:CD}

\begin{proof}[Proof of Theorem~\ref{THEOREM:CD}]
    As the functions $m_\Om$ and $m_\Ga$ are constant, we assume, without loss of generality, that $m_\Om\equiv 1$ and $m_\Ga\equiv 1$. In the following, we use the letter $C$ to denote generic positive constants depending only on $\Om$, the parameters of the system \eqref{EQ:SYSTEM}, and the initial data and the prescribed velocity field. 
    
    We consider two weak solutions $(\phi_1,\psi_1,\mu_1,\theta_1)$ and $(\phi_2,\psi_2,\mu_2,\theta_2)$ corresponding to the initial data $\scp{\phi_0^1}{\psi_0^1}, \scp{\phi_0^2}{\psi_0^2}\in\mathcal{H}^1_{K,\alpha}$, the bulk velocity fields $\boldsymbol{v}_1, \boldsymbol{v}_2\in L^2(0,T;\mathbf{L}_\Div^{3}(\Om))$ and the surface velocity fields $\boldsymbol{w}_1,\boldsymbol{w}_2\in L^2(0,T;\mathbf{L}^{3}(\Ga))$, respectively, and set
    \begin{align*}
        (\phi,\psi,\mu,\theta, \boldsymbol{v},\boldsymbol{w}) \coloneqq (\phi_2 - \phi_1, \psi_2 - \psi_1, \mu_2 - \mu_1, \theta_2 - \theta_1, \boldsymbol{v}_2 - \boldsymbol{v}_1, \boldsymbol{w}_2 - \boldsymbol{w}_1).
    \end{align*}
    Then, due to their respective weak formulations \eqref{WF}, the quadruplet $(\phi,\psi,\mu,\theta)$ satisfies
    \begin{subequations}\label{WF:UQV}
        \begin{align}
            \label{WF:UQV:PP}
            &\ang{\scp{\delt\phi}{\delt\psi}}{\scp{\zeta}{\xi}}_{\mathcal{H}_{L,\beta}^1} - \intO \phi_2\boldsymbol{v}\cdot\Grad\zeta\dx - \intO\phi\boldsymbol{v}_1\cdot\Grad\zeta\dx \nonumber \\
            &\qquad - \intG \psi_2\boldsymbol{w}\cdot\Gradg\xi\dG - \intG\psi\boldsymbol{w}_1\cdot\Gradg\xi\dG  \\
            &\quad= - \intO \Grad\mu\cdot\Grad\zeta\dx - \intG\Gradg\theta\cdot\Gradg\xi\dG 
            - \h(L)\intG(\beta\theta-\mu)(\beta\xi - \zeta)\dG, \nonumber\\
            \begin{split}
            \label{WF:UQV:MT}
            &\intO \mu\,\eta\dx + \intG\theta\,\vartheta\dG  
            - \intO \big[F'(\phi_2) - F'(\phi_1)\big]\eta \dx 
            - \intG\big[G'(\psi_2) - G'(\psi_1)\big] \vartheta \dG \\
            &\quad = \intO\Grad\phi\cdot\Grad\eta \dx 
            + \intG\Gradg\psi\cdot\Gradg\vartheta \dG
            + \h(K)\intG(\alpha\psi-\phi)(\alpha\vartheta - \eta) \dG, 
            \end{split}
        \end{align}
    \end{subequations}
    a.e.~on $[0,T]$ for all $\scp{\zeta}{\xi}\in\mathcal{H}^1_{L,\beta}$ and $\scp{\eta}{\vartheta}\in\mathcal{H}^1_{K,\alpha}$. 
    Now, due to \eqref{initial-data-mean-value}, we have
    \begin{align*}
        \beta\abs{\Om}\meano{\phi} + \abs{\Ga}\meang{\psi} = 0 \quad\text{a.e.~on~} [0,T].
    \end{align*}
    This shows that the pair $\scp{\phi(t)}{\psi(t)}$ is an admissible right-hand side in the elliptic problem with bulk-surface coupling \eqref{SYSTEM:ELLIPTIC} for almost all $t\in[0,T]$. Further, testing the weak formulation \eqref{WF:UQV:PP} with $\scp{\zeta}{\xi} = \scp{\beta}{1}\in\mathcal{H}^1_{L,\beta}$, we infer that
    \begin{align*}
        \beta\abs{\Om}\meano{\delt\phi} + \abs{\Ga}\meang{\delt\psi} = 0 \quad\text{a.e.~on~} [0,T].
    \end{align*}
    Hence, the pair $\scp{\delt\phi(t)}{\delt\psi(t)}$ is an admissible right-hand side in \eqref{SYSTEM:ELLIPTIC} for almost all $t\in[0,T]$ as well. 
    Due to the linearity of the operator $\mathcal{S}_{L,\beta}$, we deduce that 
    $\delt\mathcal{S}_{L,\beta}(\phi,\psi) = \mathcal{S}_{L,\beta}(\delt\phi,\delt\psi)$ in $\mathcal{H}^1_{L,\beta}$ for almost all $[0,T]$.
    Thus, choosing $\scp{\zeta}{\xi} = \mathcal{S}_{L,\beta}(\phi,\psi)\in\mathcal{H}^1_{L,\beta}$ in \eqref{WF:UQV:PP}, we derive the identity
    \begin{align}
            &\ddt\frac{1}{2}\norm{\scp{\phi}{\psi}}_{L,\beta,\ast}^2 
            = \ddt\frac{1}{2} \norm{\mathcal{S}_{L,\beta}(\phi,\psi)}_{L,\beta}^2 \nonumber \\
            &\quad= \scp{\mathcal{S}_{L,\beta}(\delt\phi,\delt\psi)}{\mathcal{S}_{L,\beta}(\phi,\psi)}_{L,\beta} 
            \nonumber\\&\quad
            = - \ang{\scp{\delt\phi}{\delt\psi}}{\mathcal{S}_{L,\beta}(\phi,\psi)}_{\mathcal{H}^1_{L,\beta}} \nonumber \\
            &\quad = \bigscp{\scp{\mu}{\theta}}{\mathcal{S}_{L,\beta}(\phi,\psi)}_{L,\beta} \label{EQ:Gronwall1}
            \\
            &\qquad - \bigscp{\scp{\phi_2\boldsymbol{v} + \phi\boldsymbol{v}_1}{\psi_2\boldsymbol{w} + \psi\boldsymbol{w}_1}}{\scp{\Grad\mathcal{S}_{L,\beta}^\Om(\phi,\psi)}{\Gradg\mathcal{S}_{L,\beta}^\Ga(\phi,\psi)}}_{\mathcal{L}^2} \nonumber \\
            &\quad = -\bigscp{\scp{\mu}{\theta}}{\scp{\phi}{\psi}}_{\mathcal{L}^2} \nonumber \\
            &\qquad - \bigscp{\scp{\phi_2\boldsymbol{v} + \phi\boldsymbol{v}_1}{\psi_2\boldsymbol{w} + \psi\boldsymbol{w}_1}}{\scp{\Grad\mathcal{S}_{L,\beta}^\Om(\phi,\psi)}{\Gradg\mathcal{S}_{L,\beta}^\Ga(\phi,\psi)}}_{\mathcal{L}^2} \nonumber
    \end{align}
    a.e.~on $[0,T]$. Here, the second term on the right-hand side can be bounded by
    \begin{align}\label{jonas22}
        \begin{split}
            &\bigscp{\scp{\phi_2\boldsymbol{v} + \phi\boldsymbol{v}_1}{\psi_2\boldsymbol{w} + \psi\boldsymbol{w}_1}}{\scp{\Grad\mathcal{S}_{L,\beta}^\Om(\phi,\psi)}{\Gradg\mathcal{S}_{L,\beta}^\Ga(\phi,\psi)}}_{\mathcal{L}^2} \\
            &\leq \norm{\scp{\phi_2\boldsymbol{v} + \phi\boldsymbol{v}_1}{\psi_2\boldsymbol{w} + \psi\boldsymbol{w}_1}}_{\mathcal{L}^2} \norm{\scp{\Grad\mathcal{S}_{L,\beta}^\Om(\phi,\psi)}{\Gradg\mathcal{S}_{L,\beta}^\Ga(\phi,\psi)}}_{\mathcal{L}^2} \\
            &\leq \Big(\norm{\scp{\phi_2\boldsymbol{v}}{\psi_2\boldsymbol{w}}}_{\mathcal{L}^2} + \norm{\scp{\phi\boldsymbol{v}_1}{\psi\boldsymbol{w}_1}}_{\mathcal{L}^2}\Big)\norm{\mathcal{S}_{L,\beta}(\phi,\psi)}_{L,\beta} \\
            &\leq \Big(C\norm{\scp{\boldsymbol{v}}{\boldsymbol{w}}}_{\mathcal{L}^{3}} + \norm{\scp{\phi}{\psi}}_{\mathcal{L}^{6}}\norm{\scp{\boldsymbol{v}_1}{\boldsymbol{w}_1}}_{\mathcal{L}^{3}}\Big)\norm{\scp{\phi}{\psi}}_{L,\beta,\ast}
        \end{split}
    \end{align}
    a.e.~on $[0,T]$. Using Young's inequality in combination with the Sobolev embedding $\mathcal{H}^1\emb\mathcal{L}^6$ and the Poincar\'{e} inequality, we obtain
    \begin{align}\label{jonas24}
        \begin{split}
            &\norm{\scp{\phi}{\psi}}_{\mathcal{L}^{6}}\norm{\scp{\boldsymbol{v}_1}{\boldsymbol{w}_1}}_{\mathcal{L}^{3}}\norm{\scp{\phi}{\psi}}_{L,\beta,\ast} \\
            % &\leq \Big(\ep\norm{\scp{\phi}{\psi}}_{K,\alpha} + C\norm{\scp{\phi}{\psi}}_{L,\beta,\ast}\Big)\norm{\scp{\boldsymbol{v}_1}{\boldsymbol{w}_1}}_{\mathcal{L}^{3+\rho}}\norm{\scp{\phi}{\psi}}_{L,\beta,\ast} \\
            &\leq \ep\norm{\scp{\phi}{\psi}}^2_{K,\alpha} + C_\ep\norm{\scp{\boldsymbol{v}_1}{\boldsymbol{w}_1}}_{\mathcal{L}^{3}}^2\norm{\scp{\phi}{\psi}}_{L,\beta,\ast}^2
        \end{split}
    \end{align}
    a.e.~on $[0,T]$ for all $\ep >0$ and a constant $C_\ep$. Another application of Young's inequality yields
    \begin{align}\label{jonas25}
        \begin{split}
           C\norm{\scp{\boldsymbol{v}}{\boldsymbol{w}}}_{\mathcal{L}^{3}}\norm{\scp{\phi}{\psi}}_{L,\beta,\ast} \leq \frac12\norm{\scp{\boldsymbol{v}}{\boldsymbol{w}}}_{\mathcal{L}^{3}}^2 + C\norm{\scp{\phi}{\psi}}_{L,\beta,\ast}^2
        \end{split}
    \end{align}
    a.e.~on $[0,T]$. Plugging \eqref{jonas24} and \eqref{jonas25} back into \eqref{jonas22}, we infer
    \begin{align}\label{jonas26}
        \begin{split}
            &\bigscp{\scp{\phi_2\boldsymbol{v} + \phi\boldsymbol{v}_1}{\psi_2\boldsymbol{w} + \psi\boldsymbol{w}_1}}{\scp{\Grad\mathcal{S}_{L,\beta}^\Om(\phi,\psi)}{\Gradg\mathcal{S}_{L,\beta}^\Ga(\phi,\psi)}}_{\mathcal{L}^2} \\
            &\leq \frac12\norm{\scp{\boldsymbol{v}}{\boldsymbol{w}}}_{\mathcal{L}^{3}}^2 + \ep\norm{\scp{\phi}{\psi}}_{K,\alpha}^2 + C\norm{\scp{\boldsymbol{v}_1}{\boldsymbol{w}_1}}_{\mathcal{L}^{3}}^2\norm{\scp{\phi}{\psi}}_{L,\beta,\ast}^2
        \end{split}
    \end{align}
    a.e.~on $[0,T]$. For the first term on the right-hand side of \eqref{EQ:Gronwall1} we find
    \begin{align}\label{jonas27}
        \begin{split}
            -\bigscp{\scp{\mu}{\theta}}{\scp{\phi}{\psi}}_{\mathcal{L}^2} &= -\norm{\scp{\phi}{\psi}}_{K,\alpha}^2 - \intO [F^\prime(\phi_2) - F^\prime(\phi_1)]\phi\dx 
            \\&\qquad 
            -\intG[G^\prime(\psi_2) - G^\prime(\psi_1)]\psi\dG
        \end{split}
    \end{align}
    a.e.~on $[0,T]$. Now, exploiting the growth condition on $F^{\prime\prime}$ (see \ref{ASSUMP:POTENTIALS:1}) with $p<6$ and using the fundamental theorem of calculus as well as Hölder's inequality, we compute
    \begin{align}\label{jonas28}
        \begin{split}
            &\left\vert\intO [F^\prime(\phi_2) - F^\prime(\phi_1)]\phi\dx\right\vert 
            = \left\vert\intO\int_0^1 F^{\prime\prime}\big(\tau\phi_2 + (1-\tau)\phi_1\big)\dtau (\phi_2 - \phi_1)\phi\dx\right\vert \\
            &\quad \leq C\intO\left(1 + \abs{\phi_1}^{p-2} + \abs{\phi_2}^{p-2}\right)\abs{\phi}^2\dx \\
            &\quad\leq C\big( 1 + \norm{\phi_1}_{L^6(\Om)}^{p-2} + \norm{\phi_2}_{L^6(\Om)}^{p-2}\big)\norm{\phi}_{L^{12/(8-p)}(\Om)}^2
            \leq C\norm{\phi}_{L^{12/(8-p)}(\Om)}^2
        \end{split}
    \end{align}
    a.e.~on $[0,T]$. Recalling $H^1(\Ga)\emb L^q(\Ga)$,  we analogously infer
    \begin{align}\label{jonas29}
        \left\vert\intG[G^\prime(\psi_2) - G^\prime(\psi_1)]\psi\dG\right\vert &\leq C\norm{\psi}_{L^{12/(8-p)}(\Ga)}^2
    \end{align}
    a.e.~on $[0,T]$.
    Combining \eqref{jonas28}--\eqref{jonas29} yields
    \begin{align*}%\label{jonas30}
            \left\vert\intO [F^\prime(\phi_2) - F^\prime(\phi_1)]\phi\dx\right\vert + \left\vert\intG[G^\prime(\psi_2) - G^\prime(\psi_1)]\psi\dG\right\vert 
            \leq C\norm{\scp{\phi}{\psi}}_{\mathcal{L}^{12/(8-p)}}^2
    \end{align*}
    a.e.~on $[0,T]$.
    Next, recalling again that $p<6$, which entails the compact embedding $\mathcal{H}^1\emb \mathcal{L}^{12/(8-p)}$, we deduce with Ehrling's lemma that
    \begin{align*}%\label{jonas31}
        \norm{\scp{\phi}{\psi}}_{\mathcal{L}^{12/(8-p)}}^2 \leq \ep \norm{\scp{\phi}{\psi}}_{K,\alpha}^2 + C_\ep\norm{\scp{\phi}{\psi}}_{L,\beta,\ast}^2.
    \end{align*}
    Therefore,
    \begin{align*}
        \begin{split}
            &\ddt\frac12\norm{\scp{\phi}{\psi}}_{L,\beta,\ast}^2 + (1 -C\ep)\norm{\scp{\phi}{\psi}}_{K,\alpha}^2\leq \frac12\norm{\scp{\boldsymbol{v}}{\boldsymbol{w}}}_{\mathcal{L}^{3}}^2 +  C\mathcal{F}\norm{\scp{\phi}{\psi}}_{L,\beta,\ast}^2
        \end{split}
    \end{align*}
    a.e.~on $[0,T]$, where $\mathcal{F} \coloneqq \norm{\scp{\boldsymbol{v}_1}{\boldsymbol{w}_1}}_{\mathcal{L}^{3}}^2$.
    Choosing $\ep> 0$ sufficiently small, we conclude
    \begin{align*}
        \ddt\frac12\norm{\scp{\phi}{\psi}}_{L,\beta,\ast}^2 \leq  \frac12\norm{\scp{\boldsymbol{v}}{\boldsymbol{w}}}_{\mathcal{L}^3}^2 +   C\mathcal{F}\norm{\scp{\phi}{\psi}}_{L,\beta,\ast}^2.
    \end{align*}
    As $\mathcal{F}\in L^1(0,T)$, Gronwall's lemma directly implies
    \begin{align*}
        \norm{\scp{\phi(t)}{\psi(t)}}^2_{L,\beta,\ast} &\leq \norm{\scp{\phi(0)}{\psi(0)}}^2_{L,\beta,\ast}\exp\left(C\int_0^t\mathcal{F}(\tau)\dtau\right) \\
        &\quad + \int_0^t\norm{\scp{\boldsymbol{v}(s)}{\boldsymbol{w}(s)}}^2_{\mathcal{L}^3}\exp\left(C\int_s^t\mathcal{F}(\tau)\dtau\right)\ds
    \end{align*}
    for almost all $t\in[0,T]$, which proves \eqref{EST:continuous-dependence}. As a consequence, if $(\phi_0^1,\psi_0^1) = (\phi_0^2,\psi_0^2)$ a.e.~in $\Omega\times \Gamma$, $\boldsymbol{v}_1 = \boldsymbol{v}_2$ a.e.~in $Q$ and $\boldsymbol{w}_1 = \boldsymbol{w}_2$ a.e.~on $\Sigma$, the uniqueness of the corresponding weak solution immediately follows. Thus, the proof is complete.
\end{proof}

\section*{Appendix: Some calculus for bulk-surface function spaces}
\addcontentsline{toc}{section}{Appendix: Some calculus for bulk-surface function spaces}
\renewcommand\thesection{A}
\setcounter{theorem}{0}
\setcounter{equation}{0}

\begin{proposition}\label{CR}
    Let $\Om\subset\R^d$ with $d\in\{2,3\}$ be a non-empty, bounded domain with Lipschitz boundary $\Ga \coloneqq \partial\Om$. Moreover, let $T>0$, $K,L\in [0,\infty]$ and $\alpha,\beta\in\R$ be arbitrary.    
    Let $\scp{u}{v}\in C([0,T];\mathcal{L}^2) \cap L^2(0,T;\mathcal{H}^3)$ with $\scp{-\Lap u}{-\Lapg v + \alpha\deln u}\in\mathcal{H}^1_{L,\beta}$ and
    \begin{align}\label{A:boundary-cond}%\tag{A.1}
        \begin{cases} K\deln u = \alpha v - u &\text{if} \ K\in [0,\infty), \\
        \deln u = 0 &\text{if} \ K = \infty
        \end{cases} \quad\text{a.e.~on } \Sigma.
    \end{align}
    We further suppose that their weak time derivative satisfies $\scp{\delt u}{\delt v}\in L^2(0,T;(\mathcal{H}^1_{L,\beta})^\prime)$. Then, the continuity property $\scp{u}{v}\in C([0,T];\mathcal{H}^1)$ holds, the mapping
    \begin{align*}
        t\mapsto \norm{\scp{u(t)}{v(t)}}^2_{K,\alpha}
    \end{align*}
    is absolutely continuous on $[0,T]$, and the chain rule formula
    \begin{align}\label{A:id:cr}%\tag{A.2}
        \ddt \norm{\scp{u(t)}{v(t)}}^2_{K,\alpha} = 2\bigang{\scp{\delt u}{\delt v}(t)}{\scp{-\Lap u}{-\Lapg v + \alpha\deln u}(t)}_{\mathcal{H}^1_{L,\beta}}
    \end{align}
    holds for almost all $t\in[0,T]$.
\end{proposition}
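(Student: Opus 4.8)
Set $\scp{f}{g}:=\scp{-\Lap u}{-\Lapg v+\alpha\deln u}$. Since $\scp{u}{v}\in L^2(0,T;\mathcal{H}^3)$ we have $\Lap u\in H^1(\Om)$, $\Lapg v\in H^1(\Ga)$ and $\deln u\in L^2(\Ga)$ (indeed $\deln u\in C(\Ga)$, as $H^2(\Om)\emb C(\overline{\Om})$ for $d\in\{2,3\}$), so $\scp{f}{g}\in L^2(0,T;\mathcal{H}^1)$; by hypothesis and the closedness of $\mathcal{H}^1_{L,\beta}$ in $\mathcal{H}^1$, even $\scp{f}{g}\in L^2(0,T;\mathcal{H}^1_{L,\beta})$. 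The plan is to first establish the \emph{integration-by-parts identity}
\begin{align}\label{A:ibp}
    \bigscp{\scp{u(t)}{v(t)}}{\scp{\eta}{\vartheta}}_{K,\alpha}
    = \bigscp{\scp{f(t)}{g(t)}}{\scp{\eta}{\vartheta}}_{\mathcal{L}^2}
    \qquad\text{for all }\scp{\eta}{\vartheta}\in\mathcal{H}^1_{K,\alpha},
\end{align}
valid for a.e.\ $t\in[0,T]$. This follows from the Green formulas $\intO\Grad u\cdot\Grad\eta\dx=-\intO\Lap u\,\eta\dx+\intG\deln u\,\eta\dG$ and $\intG\Gradg v\cdot\Gradg\vartheta\dG=-\intG\Lapg v\,\vartheta\dG$ (valid for $H^3$ functions on the present domain), combined with the boundary condition \eqref{A:boundary-cond}: if $K\in(0,\infty)$ one uses $\h(K)(\alpha v-u)=\deln u$ to reassemble the $\Ga$-terms into $\alpha\intG\deln u\,\vartheta\dG$; if $K=\infty$ then $\deln u=0$; and if $K=0$ the admissible test functions satisfy $\eta=\alpha\vartheta$ on $\Ga$, so again $\intG\deln u\,\eta\dG=\alpha\intG\deln u\,\vartheta\dG$.

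Next I would prove the continuity $\scp{u}{v}\in C([0,T];\mathcal{H}^1)$. When $L\in(0,\infty]$ we have $\mathcal{H}^1_{L,\beta}=\mathcal{H}^1$, hence $\scp{\delt u}{\delt v}\in L^2(0,T;(\mathcal{H}^1)')$; since additionally $\scp{u}{v}\in L^2(0,T;\mathcal{H}^3)$ and $[\mathcal{H}^3,(\mathcal{H}^1)']_{1/2}=\mathcal{H}^1$, the standard Lions--Magenes interpolation lemma gives $\scp{u}{v}\in C([0,T];\mathcal{H}^1)$. (When $L=0$ — which, for the applications in this paper, only occurs together with $K=0$ and the compatibility condition of the preceding remark — one argues instead that $\scp{u}{v}\in L^\infty(0,T;\mathcal{H}^1)\cap C([0,T];\mathcal{L}^2)$ is weakly continuous into $\mathcal{H}^1$ and upgrades this to strong continuity by combining the continuity of $t\mapsto\norm{\scp{u(t)}{v(t)}}^2_{K,\alpha}$ obtained from the chain rule below with the compactness of the trace $H^1(\Om)\emb L^2(\Ga)$, which controls the cross term distinguishing $\norm{\cdot}_{K,\alpha}$ from $\norm{\cdot}_{\mathcal{H}^1}$.) In particular $t\mapsto\norm{\scp{u(t)}{v(t)}}^2_{K,\alpha}$ is then continuous on $[0,T]$.

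To obtain the chain rule I would regularise in time: after extending $\scp{u}{v}$ slightly past $[0,T]$ (or using one-sided mollifiers near the endpoints), let $\scp{u_h}{v_h}:=\rho_h\ast\scp{u}{v}$ be the mollification with a standard kernel $\rho_h$, so that $\scp{u_h}{v_h}\in C^\infty([0,T];\mathcal{H}^3)$. Because time-mollification commutes with $\Lap$, $\Lapg$, $\deln$, with the trace and with $\delt$, the pair $\scp{u_h}{v_h}$ again satisfies \eqref{A:boundary-cond}, one has $-\Lap u_h=\rho_h\ast f$ and $-\Lapg v_h+\alpha\deln u_h=\rho_h\ast g$, and $\scp{\delt u_h}{\delt v_h}(t)\in\mathcal{H}^3\subset\mathcal{H}^1_{K,\alpha}$ — where for $K=0$ the inclusion hinges on $\delt u_h=\alpha\,\delt v_h$ on $\Ga$, which follows by differentiating $u_h=\alpha v_h$ on $\Ga$. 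Hence $\scp{u_h}{v_h}\in C^1([0,T];\mathcal{H}^1_{K,\alpha})$, the elementary chain rule gives $\ddt\norm{\scp{u_h}{v_h}(t)}^2_{K,\alpha}=2\bigscp{\scp{u_h}{v_h}(t)}{\scp{\delt u_h}{\delt v_h}(t)}_{K,\alpha}$, and inserting $\scp{\eta}{\vartheta}=\scp{\delt u_h}{\delt v_h}(t)$ into \eqref{A:ibp} applied to $\scp{u_h}{v_h}$ turns the right-hand side into $2\bigscp{\scp{\rho_h\ast f}{\rho_h\ast g}(t)}{\scp{\delt u_h}{\delt v_h}(t)}_{\mathcal{L}^2}=2\bigang{\scp{\delt u_h}{\delt v_h}(t)}{\scp{\rho_h\ast f}{\rho_h\ast g}(t)}_{\mathcal{H}^1_{L,\beta}}$, the last equality merely reading the $\mathcal{L}^2$ product as the duality pairing (valid since $\scp{\delt u_h}{\delt v_h}(t)\in\mathcal{L}^2\emb(\mathcal{H}^1_{L,\beta})'$). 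Integrating over $[a,b]\subseteq[0,T]$ and passing to the limit $h\to0$ — using $\rho_h\ast\scp{\delt u}{\delt v}\to\scp{\delt u}{\delt v}$ in $L^2(0,T;(\mathcal{H}^1_{L,\beta})')$ and $\rho_h\ast\scp{f}{g}\to\scp{f}{g}$ in $L^2(0,T;\mathcal{H}^1_{L,\beta})$ on the right-hand side, and $\scp{u_h}{v_h}\to\scp{u}{v}$ in $L^2(0,T;\mathcal{H}^3)$, hence a.e.\ pointwise along a subsequence, on the left — yields
\begin{align}\label{A:ibp2}
    \norm{\scp{u(b)}{v(b)}}^2_{K,\alpha} - \norm{\scp{u(a)}{v(a)}}^2_{K,\alpha}
    = 2\int_a^b \bigang{\scp{\delt u}{\delt v}(t)}{\scp{f}{g}(t)}_{\mathcal{H}^1_{L,\beta}}\dt
\end{align}
for a.e.\ $a,b\in[0,T]$. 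Since, by the previous step, the left-hand side is continuous in $a$ and $b$ while the right-hand side integrand lies in $L^1(0,T)$, identity \eqref{A:ibp2} extends to \emph{all} $a,b\in[0,T]$; this shows that $t\mapsto\norm{\scp{u(t)}{v(t)}}^2_{K,\alpha}$ is absolutely continuous on $[0,T]$ and that \eqref{A:id:cr} holds for a.e.\ $t$.

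I expect the delicate part to be the regularisation step: verifying carefully that time-mollification commutes with all the spatial operators and with the trace, so that $\scp{u_h}{v_h}$ genuinely still solves \eqref{A:boundary-cond} and $\scp{\delt u_h}{\delt v_h}(t)$ is an \emph{admissible} test function in \eqref{A:ibp} — the latter being exactly the point where the case $K=0$ requires care — together with handling the behaviour of the mollification at the endpoints of $[0,T]$, which is where the continuity into $\mathcal{L}^2$ (and, in the borderline parameter cases, the $L^\infty(0,T;\mathcal{H}^1)$ bound) enters.
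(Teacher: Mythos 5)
Your proposal is correct and follows essentially the same route as the paper, which for this proposition simply cites \cite[Proposition~A.1.(b)]{Colli2023} and notes that the argument there proceeds by time-mollification and passage to the limit — exactly the scheme you carry out in detail (integration by parts, mollify in time, apply the smooth chain rule, pass to the limit, upgrade to all $a,b$). The only point I would tidy up is the $L=0$ case, where your continuity argument invokes the chain rule while the extension of \eqref{A:ibp2} to all endpoints invokes continuity; this apparent circularity is resolved in the standard way (the a.e.\ identity shows the norm agrees a.e.\ with an absolutely continuous function, and weak $\mathcal{H}^1$-continuity plus norm convergence then yields strong continuity), but it deserves to be stated in that order.
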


\begin{proof}[Proof of Proposition~\ref{CR}.]
    In the cases $(K,L)\in [0,\infty)\times\{\infty\}$, a proof was already given in \cite[Proposition~A.1.(b)]{Colli2023}. There, the claim was established by approximating the occurring functions by a sequence of time-mollified functions, and eventually passing to the limit. For all other choices of $(K,L)$, the proof can be carried out analogously.
\end{proof}

%%%%%%%%%%%%%%%%%%%%%%%%%%%%%%%%%%%%%%%%%%%%%%%%%%%%%%%%%%%%%%%%%%%%%%%%%%%%%%%%%
%********************************************************************************
% END OF DOCUMENT
%********************************************************************************
%%%%%%%%%%%%%%%%%%%%%%%%%%%%%%%%%%%%%%%%%%%%%%%%%%%%%%%%%%%%%%%%%%%%%%%%%%%%%%%%%

\section*{Acknowledgement}
This research was funded by the Deutsche Forschungsgemeinschaft (DFG, German Research Foundation) – Project 52469428. Moreover, the authors were partially supported by the Deutsche Forschungsgemeinschaft (DFG, German Research Foundation) – RTG 2339. The support is gratefully acknowledged.

\section*{Competing Interests and funding}
The authors do not have any financial or non-financial interests that are directly or indirectly related to the work submitted for publication.

\section*{Data availability statement}
No further data is used in this manuscript.

%%%%%%%%%%%%%%%%%%%%%%%%%%%%%%%%%%%%%%%%%%%%%%%%%%%%%%%%%%%%%%%%%%%%%%%%%%%%%%%%%
%********************************************************************************
% REFERENCES
%********************************************************************************
%%%%%%%%%%%%%%%%%%%%%%%%%%%%%%%%%%%%%%%%%%%%%%%%%%%%%%%%%%%%%%%%%%%%%%%%%%%%%%%%%

\footnotesize

\bibliographystyle{abbrv}
\bibliography{KS_CCH}

\begin{thebibliography}{10}

\bibitem{Abels2012}
H.~Abels, H.~Garcke, and G.~Gr\"{u}n.
\newblock Thermodynamically consistent, frame indifferent diffuse interface
  models for incompressible two-phase flows with different densities.
\newblock {\em Math. Models Methods Appl. Sci.}, 22(3):1150013, 40, 2012.

\bibitem{Bao2021}
X.~Bao and H.~Zhang.
\newblock Numerical approximations and error analysis of the {C}ahn-{H}illiard
  equation with dynamic boundary conditions.
\newblock {\em Commun. Math. Sci.}, 19(3):663--685, 2021.

\bibitem{Bao2021a}
X.~Bao and H.~Zhang.
\newblock Numerical approximations and error analysis of the {C}ahn-{H}illiard
  equation with reaction rate dependent dynamic boundary conditions.
\newblock {\em J. Sci. Comput.}, 87(3):Paper No. 72, 32, 2021.

\bibitem{Cavaterra2011}
C.~Cavaterra, C.~G. Gal, and M.~Grasselli.
\newblock Cahn-{H}illiard equations with memory and dynamic boundary
  conditions.
\newblock {\em Asymptot. Anal.}, 71(3):123--162, 2011.

\bibitem{Colli2015}
P.~Colli and T.~Fukao.
\newblock Cahn-{H}illiard equation with dynamic boundary conditions and mass
  constraint on the boundary.
\newblock {\em J. Math. Anal. Appl.}, 429(2):1190--1213, 2015.

\bibitem{Colli2022a}
P.~Colli, T.~Fukao, and L.~Scarpa.
\newblock The {C}ahn-{H}illiard equation with forward-backward dynamic boundary
  condition via vanishing viscosity.
\newblock {\em SIAM J. Math. Anal.}, 54(3):3292--3315, 2022.

\bibitem{Colli2022}
P.~Colli, T.~Fukao, and L.~Scarpa.
\newblock A {C}ahn-{H}illiard system with forward-backward dynamic boundary
  condition and non-smooth potentials.
\newblock {\em J. Evol. Equ.}, 22(4):Paper No. 89, 31, 2022.

\bibitem{Colli2020}
P.~Colli, T.~Fukao, and H.~Wu.
\newblock On a transmission problem for equation and dynamic boundary condition
  of {C}ahn-{H}illiard type with nonsmooth potentials.
\newblock {\em Math. Nachr.}, 293(11):2051--2081, 2020.

\bibitem{Colli2014}
P.~Colli, G.~Gilardi, and J.~Sprekels.
\newblock On the {C}ahn-{H}illiard equation with dynamic boundary conditions
  and a dominating boundary potential.
\newblock {\em J. Math. Anal. Appl.}, 419(2):972--994, 2014.

\bibitem{Colli2018}
P.~Colli, G.~Gilardi, and J.~Sprekels.
\newblock On a {C}ahn-{H}illiard system with convection and dynamic boundary
  conditions.
\newblock {\em Ann. Mat. Pura Appl. (4)}, 197(5):1445--1475, 2018.

\bibitem{Colli2019}
P.~Colli, G.~Gilardi, and J.~Sprekels.
\newblock Optimal velocity control of a convective {C}ahn-{H}illiard system
  with double obstacles and dynamic boundary conditions: a `deep quench'
  approach.
\newblock {\em J. Convex Anal.}, 26(2):485--514, 2019.

\bibitem{Colli2023}
P.~Colli, P.~Knopf, G.~Schimperna, and A.~Signori.
\newblock Two-phase flows through porous media described by a
  {C}ahn-{H}illiard-{B}rinkman model with dynamic boundary conditions.
\newblock Preprint: \href{https://arxiv.org/abs/2312.15274}{arXiv:2312.15274
  [math.AP]}, 2023.

\bibitem{Fukao2021}
T.~Fukao and H.~Wu.
\newblock Separation property and convergence to equilibrium for the equation
  and dynamic boundary condition of {C}ahn-{H}illiard type with singular
  potential.
\newblock {\em Asymptot. Anal.}, 124(3-4):303--341, 2021.

\bibitem{Gal2006}
C.~G. Gal.
\newblock A {C}ahn-{H}illiard model in bounded domains with permeable walls.
\newblock {\em Math. Methods Appl. Sci.}, 29(17):2009--2036, 2006.

\bibitem{Gal2016}
C.~G. Gal, M.~Grasselli, and A.~Miranville.
\newblock {C}ahn--{H}illiard--{N}avier--{S}tokes systems with moving contact
  lines.
\newblock {\em Calc. Var. Partial Differential Equations}, 55(3):Art. 50, 47,
  2016.

\bibitem{Gal2019}
C.~G. Gal, M.~Grasselli, and H.~Wu.
\newblock Global weak solutions to a diffuse interface model for incompressible
  two-phase flows with moving contact lines and different densities.
\newblock {\em Arch. Ration. Mech. Anal.}, 234(1):1--56, 2019.

\bibitem{Gal2023a}
C.~G. Gal, M.~Lv, and H.~Wu.
\newblock On a thermodynamically consistent diffuse interface model for
  two-phase incompressible flows with non-matched densities: Dynamics of moving
  contact lines, surface diffusion, and mass transfer.
\newblock Preprint on ResearchGate:
  \url{https://doi.org/10.13140/RG.2.2.27875.73764}, 2024.

\bibitem{Gal2009}
C.~G. Gal and A.~Miranville.
\newblock Robust exponential attractors and convergence to equilibria for
  non-isothermal {C}ahn-{H}illiard equations with dynamic boundary conditions.
\newblock {\em Discrete Contin. Dyn. Syst. Ser. S}, 2(1):113--147, 2009.

\bibitem{Garcke2020}
H.~Garcke and P.~Knopf.
\newblock Weak solutions of the {C}ahn-{H}illiard system with dynamic boundary
  conditions: a gradient flow approach.
\newblock {\em SIAM J. Math. Anal.}, 52(1):340--369, 2020.

\bibitem{Garcke2022}
H.~Garcke, P.~Knopf, and S.~Yayla.
\newblock Long-time dynamics of the {C}ahn-{H}illiard equation with kinetic
  rate dependent dynamic boundary conditions.
\newblock {\em Nonlinear Anal.}, 215:Paper No. 112619, 44, 2022.

\bibitem{Gilardi2010}
G.~Gilardi, A.~Miranville, and G.~Schimperna.
\newblock Long time behavior of the {C}ahn-{H}illiard equation with irregular
  potentials and dynamic boundary conditions.
\newblock {\em Chinese Ann. Math. Ser. B}, 31(5):679--712, 2010.

\bibitem{Gilardi2019}
G.~Gilardi and J.~Sprekels.
\newblock Asymptotic limits and optimal control for the {C}ahn-{H}illiard
  system with convection and dynamic boundary conditions.
\newblock {\em Nonlinear Anal.}, 178:1--31, 2019.

\bibitem{Giorgini2023}
A.~Giorgini and P.~Knopf.
\newblock Two-phase flows with bulk-surface interaction: thermodynamically
  consistent {N}avier-{S}tokes-{C}ahn-{H}illiard models with dynamic boundary
  conditions.
\newblock {\em J. Math. Fluid Mech.}, 25(3):Paper No. 65, 44, 2023.

\bibitem{Goldstein2011}
G.~R. Goldstein, A.~Miranville, and G.~Schimperna.
\newblock A {C}ahn-{H}illiard model in a domain with non-permeable walls.
\newblock {\em Phys. D}, 240(8):754--766, 2011.

\bibitem{Gurtin1996}
M.~Gurtin, D.~Polignone, and J.~Vi\~{n}als.
\newblock Two-phase binary fluids and immiscible fluids described by an order
  parameter.
\newblock {\em Math. Models Methods Appl. Sci.}, 6(6):815--831, 1996.

\bibitem{Harder2022}
P.~Harder and B.~Kov\'{a}cs.
\newblock Error estimates for the {C}ahn-{H}illiard equation with dynamic
  boundary conditions.
\newblock {\em IMA J. Numer. Anal.}, 42(3):2589--2620, 2022.

\bibitem{Hohenberg1977}
P.~C. Hohenberg and B.~I. Halperin.
\newblock Theory of dynamic critical phenomena.
\newblock {\em Rev. Mod. Phys.}, 49:435--479, 1977.

\bibitem{Knopf2020}
P.~Knopf and K.~F. Lam.
\newblock Convergence of a {R}obin boundary approximation for a
  {C}ahn-{H}illiard system with dynamic boundary conditions.
\newblock {\em Nonlinearity}, 33(8):4191--4235, 2020.

\bibitem{Knopf2021a}
P.~Knopf, K.~F. Lam, C.~Liu, and S.~Metzger.
\newblock Phase-field dynamics with transfer of materials: the {C}ahn-{H}illard
  equation with reaction rate dependent dynamic boundary conditions.
\newblock {\em ESAIM Math. Model. Numer. Anal.}, 55(1):229--282, 2021.

\bibitem{Knopf2021}
P.~Knopf and C.~Liu.
\newblock On second-order and fourth-order elliptic systems consisting of bulk
  and surface {PDEs}: Well-posedness, regularity theory and eigenvalue
  problems.
\newblock {\em Interfaces and Free Boundaries}, 23(4):507--533, 2021.

\bibitem{Knopf2021b}
P.~Knopf and A.~Signori.
\newblock On the nonlocal {C}ahn-{H}illiard equation with nonlocal dynamic
  boundary condition and boundary penalization.
\newblock {\em J. Differential Equations}, 280:236--291, 2021.

\bibitem{Liu2019}
C.~Liu and H.~Wu.
\newblock An energetic variational approach for the {C}ahn-{H}illiard equation
  with dynamic boundary condition: model derivation and mathematical analysis.
\newblock {\em Arch. Ration. Mech. Anal.}, 233(1):167--247, 2019.

\bibitem{Meng2023}
X.~Meng, X.~Bao, and Z.~Zhang.
\newblock Second order stabilized semi-implicit scheme for the
  {C}ahn-{H}illiard model with dynamic boundary conditions.
\newblock {\em J. Comput. Appl. Math.}, 428:Paper No. 115145, 22, 2023.

\bibitem{Metzger2021}
S.~Metzger.
\newblock An efficient and convergent finite element scheme for
  {C}ahn-{H}illiard equations with dynamic boundary conditions.
\newblock {\em SIAM J. Numer. Anal.}, 59(1):219--248, 2021.

\bibitem{Metzger2023}
S.~Metzger.
\newblock {A convergent SAV scheme for Cahn–Hilliard equations with dynamic
  boundary conditions}.
\newblock {\em IMA Journal of Numerical Analysis}, page drac078, 01 2023.

\bibitem{Miranville-Book}
A.~Miranville.
\newblock {\em {The Cahn--Hilliard equation: Recent advances and
  applications}}, volume~95 of {\em CBMS-NSF Regional Conference Series in
  Applied Mathematics}.
\newblock Society for Industrial and Applied Mathematics (SIAM), Philadelphia,
  PA, 2019.

\bibitem{Miranville2020}
A.~Miranville and H.~Wu.
\newblock Long-time behavior of the {C}ahn-{H}illiard equation with dynamic
  boundary condition.
\newblock {\em J. Elliptic Parabol. Equ.}, 6(1):283--309, 2020.

\bibitem{Miranville2010}
A.~Miranville and S.~Zelik.
\newblock The {C}ahn-{H}illiard equation with singular potentials and dynamic
  boundary conditions.
\newblock {\em Discrete Contin. Dyn. Syst.}, 28(1):275--310, 2010.

\bibitem{Racke2003}
R.~Racke and S.~Zheng.
\newblock The {C}ahn-{H}illiard equation with dynamic boundary conditions.
\newblock {\em Adv. Differential Equations}, 8(1):83--110, 2003.

\bibitem{Taylor}
M.~E. Taylor.
\newblock {\em Partial differential equations {I}. {B}asic theory}, volume 115
  of {\em Applied Mathematical Sciences}.
\newblock Springer, New York, second edition, 2011.

\bibitem{Wu2022}
H.~Wu.
\newblock A review on the {C}ahn-{H}illiard equation: classical results and
  recent advances in dynamic boundary conditions.
\newblock {\em Electron. Res. Arch.}, 30(8):2788--2832, 2022.

\bibitem{Wu2004}
H.~Wu and S.~Zheng.
\newblock Convergence to equilibrium for the {C}ahn-{H}illiard equation with
  dynamic boundary conditions.
\newblock {\em J. Differential Equations}, 204(2):511--531, 2004.

\end{thebibliography}

\end{document}